\providecommand{\tabularnewline}{\\}
\numberwithin{equation}{section}
\numberwithin{figure}{section}
\theoremstyle{plain}
\newtheorem{thm}{\protect\theoremname}[section]
  \theoremstyle{plain}
  \newtheorem{lem}[thm]{\protect\lemmaname}
  \theoremstyle{definition}
  \newtheorem{defn}[thm]{\protect\definitionname}
  \theoremstyle{remark}
  \newtheorem{rem}[thm]{\protect\remarkname}
  \theoremstyle{plain}
  \newtheorem{prop}[thm]{\protect\propositionname}
  \providecommand{\definitionname}{Definition}
  \providecommand{\lemmaname}{Lemma}
  \providecommand{\propositionname}{Proposition}
  \providecommand{\remarkname}{Remark}
\providecommand{\theoremname}{Theorem}
\begin{document}

\title{Denominator Identity for Twisted Affine Lie Superalgebras}

\author{Shifra Reif}
\begin{abstract}
The study of denominator identities for Lie superalgebras was recently
developed by M. Gorelik, V.G. Kac, P.Moseneder Frajria, I. Musson,
P. Papi, M. Wakimoto and the author. In this paper we generalize these
identities to the twisted affine case, completing the result to all
affine Lie superalgebras. 
\end{abstract}

\thanks{Supported by the Minerva foundation with funding from the Federal
German Ministry for Education and Research.}

\maketitle

\section{Introduction}

The study of denominator identities for Lie superalgebras was initiated
by V.G. Kac and M. Wakimoto in \cite{KW}. These identities were shown
to have interesting applications to number theory (see \cite{KW,M,Z}),
vacuum modules and minimal $W$-algebras (see \cite{HR,KG}) and the
Howe duality for compact dual pairs (see \cite{FKP}).

The denominator identities for basic Lie superalgebras and (non-twisted)
affine Lie superalgebras with non-zero dual Coxeter number were formulated
and partially proven by V.G. Kac and M. Wakimoto \cite{KW}. Complete
proofs were given by M. Gorelik in \cite{G1,G2}. The denominator
identity for the strange $\hat{Q}$ series was conjectured in \cite{KW}
and proven by D. Zagier in \cite{Z} using analytic methods. The case
where the dual Coxeter number is zero was proven in \cite{GR}.

For a semisimple Lie algebra, the denominator identity follows by
applying the Weyl character formula to the trivial representation
and takes the form $e^{\rho}R=\sum_{w\in W}\left(\mbox{sgn}w\right)we^{\rho}$
where $R:=\prod_{\alpha\in\Delta^{+}}\left(1-e^{-\alpha}\right)$
is the denominator of $\mathfrak{g}$ corresponding to the positive
roots $\Delta^{+}$, $W$ the affine Weyl group and $\rho$ is chosen
such that $\left(\rho,\alpha\right)=\frac{1}{2}\left(\alpha,\alpha\right)$
for every simple root $\alpha$. Similarly, we have the identity $e^{\hat{\rho}}\hat{R}=\sum_{w\in\hat{W}}\left(\mbox{sgn}w\right)we^{\hat{\rho}}$
for the an affine algebra $\hat{\mathfrak{g}}$. The affine Weyl group
$\hat{W}$ has a decomposition $\hat{W}=T\rtimes W$ where $W$ corresponds
to one of the maximal finite subalgebra of $\hat{\mathfrak{g}}$ and
$T$ is an abelian group of translations. Since $\hat{\rho}-\rho$
is $W$-invariant and the elements of $T$ are of sign $1$, we can
rewrite the denominator identity as 
\[
e^{\hat{\rho}}\hat{R}=\sum_{t\in T}t\left(e^{\hat{\rho}}R\right)=\sum_{t\in T}\left(\mbox{sgn}t\right)t\left(e^{\hat{\rho}}R\right).
\]

In this paper we prove the denominator identity for twisted affine
Lie superalgebras by generalizing the latter formula. In \cite{vdL1,vdL2},
V. de Leur classified these algebras and showed that all symmetrizable
Kac Moody superalgebras of finite growth are either finite, affine
or twisted affine. The classification consists of the series: $A\left(2k-1,2l-1\right)^{\left(2\right)}$,
$A\left(2k,2l-1\right)^{\left(2\right)}$, $A\left(2k,2l\right)^{\left(4\right)}$,
$C\left(k+1\right)^{\left(2\right)}$ and $D\left(k+1,l\right)^{\left(2\right)}$
and the exceptional Lie superalgebra $G\left(3\right)^{\left(2\right)}.$
The description of the automorphisms and the root systems given in
\cite{vdL2} is summarized in Appendix \ref{sub:construction}.

Let $\hat{\mathfrak{g}}=\tilde{\mathfrak{g}}^{\left(m\right)}$ be
a twisted affine Lie superalgebras defined by an automorphism of order
$m$ and $\mathfrak{g}$ the algebra formed by the fixed points under
this automorphism. Let $\hat{R}$ and $R$ be the denominators of
$\hat{\mathfrak{g}}$ and $\mathfrak{g}$ (see formulas (\ref{eq:R}),
(\ref{eq:hat R})), respectively, and $h^{\vee}:=\left(\hat{\rho},\delta\right)$
the dual Coxeter number where $\delta$ is the minimal imaginary root
(see \ref{sub:affine defs}). For $\hat{\mathfrak{g}}\ne A\left(2k-1,2l-1\right)^{\left(2\right)}$,
$k\ge l$, we take $T'$ to be the translation group that corresponds
to the {}``larger'' part of the Weyl group when $h^{\vee}\ne0$
and to the {}``smaller'' part when $h^{\vee}=0$ (see \ref{sub:W}).
For $\hat{\mathfrak{g}}=A\left(2k-1,2l-1\right)^{\left(2\right)}$,
$k\ge l$, we extend the affine Weyl group by a diagram automorphism
and take $T'$ to be the translation group corresponding to this extension.
The sign function is extended by setting this diagram automorphism
to be of sign 1.

We prove the following theorem:
\begin{thm}
\label{thm:identity}Let $q:=e^{-\delta}.$ Then the following identity
holds:
\begin{equation}
e^{\hat{\rho}}\hat{R}=f\left(q\right)\cdot\sum_{t\in T'}\left(\mbox{\emph{sgn}}t\right)t\left(e^{\hat{\rho}}R\right)\label{eq:denom}
\end{equation}
where
\[
f\left(q\right)=\left\{ \begin{array}{cc}
1 & \mbox{when }h^{\vee}\ne0\\
\prod_{n=1}^{\infty}\left(1-q^{2n+1}\right)^{-2} & A\left(2k-1,2k-1\right)^{\left(2\right)}\\
\prod_{n=1}^{\infty}\left(1+q^{2n+1}\right)^{-1} & A\left(2k,2k\right)^{\left(4\right)}\\
\prod_{n=1}^{\infty}\left(1-q^{2n+1}\right) & D\left(k+1,k\right)^{\left(2\right)}.
\end{array}\right.
\]

\end{thm}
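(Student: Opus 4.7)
The plan is to extend the translation-sum argument sketched in the Introduction for the untwisted case. I would compute both sides of (\ref{eq:denom}) as formal series in $q = e^{-\delta}$ with coefficients in the completed group algebra of the finite weight lattice, and reduce everything to the denominator identity for the fixed-point subalgebra $\mathfrak{g}$.

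The first step is to expand $\hat{R}$ using the construction of $\hat{\mathfrak{g}} = \tilde{\mathfrak{g}}^{(m)}$ recalled in Appendix \ref{sub:construction}: $\hat{R}$ factors as $R$ times a product over the imaginary roots $n\delta$ and the higher real roots $\alpha + n\delta$ (with $n\ge 1$), whose multiplicities are governed by the eigenspaces of the twisting automorphism. Since $\mathfrak{g}$ is a basic Lie superalgebra, its denominator identity is available from \cite{G1,G2,GR}, relating $e^{\hat{\rho}}R$ to an alternating sum over the finite Weyl group $W$.

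The second step is to analyze $\sum_{t\in T'}(\mbox{sgn }t)\,t(e^{\hat{\rho}}R)$. Each $t\in T'$ shifts $\hat{\rho}$ by a multiple of $\delta$ plus a finite weight. When $h^{\vee}\ne 0$, the ``larger'' translation lattice $T'$, combined with $W$-invariance of $\hat{\rho}-\rho$, should reproduce $e^{\hat{\rho}}\hat{R}$ exactly, by an argument parallel to the Lie-algebra computation in the Introduction. When $h^{\vee}=0$, the ``smaller'' choice of $T'$ leaves a discrepancy at certain odd imaginary root factors of $\hat{R}$: these factors are either uncovered or doubly counted by the $T'$-sum, and computing this mismatch produces the correction $f(q)$.

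The main obstacle will be the case $A(2k-1,2l-1)^{(2)}$ with $k\ge l$, where the affine Weyl group must be extended by a diagram automorphism $\sigma$ declared to have sign $+1$. I would need to verify that $\sigma$ preserves $e^{\hat{\rho}}R$ up to this declared sign, and that the enlarged $T'$ provides precisely the translations needed to recover $\hat{R}$. A secondary bookkeeping task, expected to be routine but delicate, is extracting the stated closed forms of $f(q)$ for the three exceptional families by comparing, term by term in $q$, the odd imaginary root multiplicities in $\hat{R}$ against those produced by the $T'$-sum.
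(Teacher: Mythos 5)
There is a genuine gap at the heart of your plan. Your Step 2 asserts that for $h^{\vee}\ne0$ the translation sum ``should reproduce $e^{\hat{\rho}}\hat{R}$ exactly, by an argument parallel to the Lie-algebra computation in the Introduction.'' But that computation is not a proof of anything: it \emph{starts from} the already-known affine denominator identity $e^{\hat{\rho}}\hat{R}=\sum_{w\in\hat{W}}\left(\mbox{sgn}\,w\right)we^{\hat{\rho}}$ and merely rewrites the sum over $\hat{W}=T\rtimes W$ as a sum over $T$ of $W$-sums. For Lie superalgebras no such identity over the full affine Weyl group holds (the even Weyl group does not control the odd roots), so there is nothing to rewrite; this is precisely why the theorem is nontrivial. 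The paper instead argues indirectly: it uses the Casimir operator to confine $\mbox{supp}\left(e^{\hat{\rho}}\hat{R}\right)$ and $\mbox{supp}\,\mathcal{F}_{T'}\left(e^{\hat{\rho}}R\right)$ to the quadric $U=\left\{ \mu\mid\left(\mu,\mu\right)=\left(\hat{\rho},\hat{\rho}\right)\right\}$, rewrites $\mathcal{F}_{T'}\left(e^{\hat{\rho}}R\right)$ as $\mathcal{F}_{\hat{W}'}$ of a single term via the finite-dimensional identity, shows the difference of the two sides is $\hat{W}'$-anti-invariant with support a union of regular orbits, deduces (via Lemma \ref{lem:regularOrbit} and negative-definiteness on $\Delta''$) that the only possible maximal element of that support is $\hat{\rho}$, and finally checks the coefficient of $e^{\hat{\rho}}$ by a case-by-case computation of a stabilizer. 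None of this machinery appears in your outline, and without it the claimed ``exact reproduction'' is an assertion of the theorem, not a proof.

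Two further concrete omissions. First, the sum $\sum_{t\in T'}\left(\mbox{sgn}\,t\right)t\left(e^{\hat{\rho}}R\right)$ is an infinite sum of infinite series and its well-definedness is a real issue you never address --- the paper devotes a full step to it in each case, and remarks that the analogous sum over $T''$ is \emph{not} well defined, so this cannot be waved away. Second, for $h^{\vee}=0$ your plan to extract $f\left(q\right)$ ``term by term in $q$'' presupposes that $e^{-\hat{\rho}}\hat{R}^{-1}\cdot\mathcal{F}_{T'}\left(e^{\hat{\rho}}R\right)$ depends only on $q$, i.e.\ has support in $\mathbb{Z}\delta$. That is the hardest step of the $h^{\vee}=0$ proof (it requires anti-invariance with respect to \emph{both} $\hat{W}'$ and $\hat{W}''$, plus the Casimir constraint for $A\left(2k-1,2k-1\right)^{\left(2\right)}$), and once it is established the paper still does not compare $q$-coefficients directly but evaluates at $e^{-\alpha}\mapsto\left(-1\right)^{p\left(\alpha\right)}x$ and lets $x\to1$, using that the rank of $T'$ equals the defect to kill all terms with $t\ne\mbox{id}$. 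Your proposal identifies the right objects and the right special case to worry about ($A\left(2k-1,2l-1\right)^{\left(2\right)}$ and the extended Weyl group), but the actual mechanism of proof is missing.
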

We extend the proofs of the non-twisted cases in \cite{G1,GR}. The
proof splits into two parts, namely the cases of dual Coxeter number
zero and non-zero. We use the action of the Casimir operator as well
as the denominator identity for basic Lie superalgebras. As $e^{\hat{\rho}}\hat{R}$
is independent of the choice of simple roots, we prove the identity
for convenient choices described in Appendix \ref{sub:Appendix description of simple roots}.

Acknowledgments. The paper is based on a part of the author's dissertation
done under the supervision of Prof. M. Gorelik. The author would like
to thank M. Gorelik for reading several drafts of this paper, suggesting
ways to improve and fix it. The author is also grateful for A. Joseph
for helpful conversations and to G. Binyamini, C. Hoyt and X. Lamprou
for discussing on the presentation of this paper.

\section{Preliminaries}

We introduce standard notations and elementary facts that are used
in the paper.

\subsection{Twisted affine Lie superalgebras.\label{sub:affine defs}}

Let $\tilde{\mathfrak{g}}$ be a basic simple  Lie superalgebra with
non-degenerate invariant bilinear form $\left(\cdot,\cdot\right)$
and  $\sigma$ an automorphism of finite order $m>1$. The eigenvalues
of $\sigma$ are of the form $e^{\frac{2\pi i}{m}k}$, $k\in\mathbb{Z}_{m}$
and hence $\tilde{\mathfrak{g}}$ admits the following $\mathbb{Z}_{m}$-grading:
\[
\tilde{\mathfrak{g}}=\bigoplus_{k\in\mathbb{Z}_{m}}\tilde{\mathfrak{g}}_{\left(k\right)},\quad\tilde{\mathfrak{g}}_{\left(k\right)}=\left\{ x\in\tilde{\mathfrak{g}}\mid\sigma\left(x\right)=e^{\frac{2\pi i}{m}k}\cdot x\right\} .
\]
The twisted affine Lie superalgebra is defined to be 
\[
\hat{\mathfrak{g}}:=\left(\bigoplus_{k\in\mathbb{Z}_{m}}\mathbb{C}t^{k}\otimes\tilde{\mathfrak{g}}_{\left(k\left(\mbox{mod}m\right)\right)}\right)\oplus\mathbb{C}K\oplus\mathbb{C}D
\]
with the relations
\[
\left[t^{i}\otimes g_{1},t^{j}\otimes g_{2}\right]=t^{i+j}\otimes\left[g_{1},g_{2}\right]+i\delta_{i,-j}\left(g_{1},g_{2}\right)K,\quad\left[\hat{\frak{g}},K\right]=0,\quad\left[D,t^{i}\otimes g\right]=it^{i}\otimes g.
\]
The fixed points of $\sigma$ form a maximal finite subalgebra of
$\hat{\mathfrak{g}}$ which we denote by $\mathfrak{g}$. 

Let $\mathfrak{h}$ be a Cartan subalgebra of $\mathfrak{g}\subset\tilde{\mathfrak{g}}$
and $\tilde{\mathfrak{g}}=\bigoplus_{\alpha\in\mathfrak{h}^{*}}\tilde{\mathfrak{g}}_{\alpha}$
a decomposition where 
\[
\tilde{\mathfrak{g}}_{\alpha}:=\left\{ g\in\tilde{\mathfrak{g}}\mid\left[h,g\right]=\alpha\left(h\right)\cdot g,\,\,\forall h\in\mathfrak{h}\right\} .
\]
For every $i=0,1$, $j=0,\ldots m-1$, let 
\[
\Delta_{\bar{i}}^{(j)}:=\left\{ \alpha\in\mathfrak{h}^{*}\mid\tilde{\mathfrak{g}}_{\alpha}\cap\tilde{\mathfrak{g}}_{(j)}\cap\tilde{\mathfrak{g}}_{\bar{i}}\ne\left\{ 0\right\} \right\} 
\]
be a multi-set such that the multiplicity of $\alpha$ is $\dim\tilde{\mathfrak{g}}_{\alpha}\cap\tilde{\mathfrak{g}}_{(j)}\cap\tilde{\mathfrak{g}}_{\bar{i}}$.
We get that the set of roots of $\mathfrak{g}$ is $\Delta=\Delta_{\bar{0}}\cup\Delta_{\bar{1}}$
where $\Delta_{\bar{i}}:=\Delta_{\bar{i}}^{(0)}$. The set of roots
$\hat{\Delta}$ of $\hat{\mathfrak{g}}$ takes the form $\hat{\Delta}=\hat{\Delta}_{\bar{0}}\cup\hat{\Delta}_{\bar{1}}$
where 
\[
\hat{\Delta}_{\bar{i}}=\left\{ l\delta+\alpha\mid\alpha\in\Delta_{\bar{i}}^{(j)},\, l=j\left(\mbox{mod}\, m\right)\mbox{ such that }l\delta+\alpha\ne0\right\} .
\]
Note that the sets $\Delta_{\bar{i}}^{(j)}$ may contain $0$, in
which case the imaginary roots $l\delta\in\hat{\Delta}$ can be either
odd or even.

Fix a set $\pi$ of simple roots of $\mathfrak{g}$. We take $\hat{\pi}=\left\{ \alpha_{0}:=\delta-\theta\right\} \cup\pi$
to be the set of simple roots of $\hat{\mathfrak{g}}$, where $\theta$
is the highest weight in $\Delta_{\bar{0}}^{\left(1\right)}\cup\Delta_{\bar{1}}^{\left(1\right)}$.
The root lattice of $\hat{\mathfrak{g}}$ is defined to be $\hat{Q}=\sum_{i=0}^{n}\mathbb{Z}\alpha_{i}$.
Let $\hat{Q}^{+}=\sum_{i=0}^{n}\mathbb{N}\alpha_{i}$. Define the
partial ordering on $\hat{\mathfrak{h}}^{*}$ by $\mu\ge\nu$ if $\mu-\nu\in\hat{Q}^{+}$.
We extend $\left(\cdot,\cdot\right)$ to $\hat{\mathfrak{g}}$ in
the standard way. Let $\rho\in\mathfrak{h}^{*}$ be such that $\left(\rho,\alpha\right)=\frac{1}{2}\left(\alpha,\alpha\right)$
for every $\alpha$ in $\pi$. Let $\Lambda_{0}\in\hat{\mathfrak{h}}^{*}$
be such that $\left(\Lambda_{0},\delta\right)=1$ and $\left(\Lambda_{0},\Lambda_{0}\right)=\left(\Lambda_{0},\mathfrak{h}\right)=0$.
Let $\hat{\rho}:=h^{\vee}\Lambda_{0}+\rho$.

 Let $\hat{\Delta}_{\bar{0}}^{+}$ and $\hat{\Delta}_{\bar{1}}^{+}$
be the positive even and odd roots of $\hat{\mathfrak{g}}$, respectively
and $\Delta_{\bar{0}}^{+}$ and $\Delta_{\bar{1}}^{+}$ the positive
even and odd roots of $\mathfrak{g}$, respectively. The denominators
of $\hat{\mathfrak{g}}$ and $\mathfrak{g}$ are defined to be 
\begin{eqnarray}
R:=\frac{R_{\bar{0}}}{R_{\bar{1}}}, &  & R_{\bar{0}}=\prod_{\alpha\in\Delta_{\bar{0}}^{+}}\left(1-e^{-\alpha}\right)\mbox{ and }R_{\bar{1}}=\prod_{\alpha\in\Delta_{\bar{1}}^{+}}\left(1+e^{-\alpha}\right)\label{eq:R}\\
\hat{R}:=\frac{\hat{R}_{\bar{0}}}{\hat{R}_{\bar{1}}}, &  & \hat{R}_{\bar{0}}=\prod_{\alpha\in\hat{\Delta}_{\bar{0}}^{+}}\left(1-e^{-\alpha}\right)\mbox{ and }\hat{R}_{\bar{1}}=\prod_{\alpha\in\hat{\Delta}_{\bar{1}}^{+}}\left(1+e^{-\alpha}\right).\label{eq:hat R}
\end{eqnarray}
Recall that $q=e^{-\delta}$. The affine denominator takes the form
\[
\hat{R}=R\cdot\prod_{n=1}^{\infty}\prod_{j=0}^{m-1}\frac{\prod_{\alpha\in\Delta_{\bar{0}}^{\left(j\right)}}\left(1-q^{mn+j}e^{-\alpha}\right)}{\prod_{\alpha\in\Delta_{\bar{1}}^{\left(j\right)}}\left(1+q^{mn+j}e^{-\alpha}\right)}
\]
 where the elements from $\Delta_{\bar{i}}^{\left(j\right)}$ are
taken with multiplicity.

\subsection{The Weyl group of a Lie superalgebra.}

One of the main tools that is used in the proof is the action of certain
subgroups of the Weyl group. We recall the definition of the Weyl
group for twisted affine Lie superalgebras and some facts about its
action.

An even root $\alpha$ is called \emph{principal} if in some base
$\Pi'$ obtained from $\pi$ by a sequence of odd reflections, either
$\alpha$ or $\frac{\alpha}{2}$ is simple (see \cite[5]{HS}). The
\emph{Weyl group} is defined to be the group generated by reflections
with respect to the principal roots. For basic simple Lie superalgebras,
it coincides with the group generated by reflections with respect
to the even roots. For twisted affine Lie superalgebra, the real even
roots can be identified with the real roots of a Kac-Moody algebra
and the Weyl group is generated by these roots (see \cite{S}).

We shall use the following lemma in the proof of Theorem \ref{thm:identity}
for the case $h^{\vee}\ne0$.
\begin{lem}
\emph{\label{lem:WeylGroupAction}(\cite[1.3.2]{G1}) }Let $\Pi_{+}$
be the set of principal roots satisfying $\left\langle \hat{\rho},\alpha^{\vee}\right\rangle \ge0$
for all $\alpha\in\Pi_{+}$, and $W_{+}$ the subgroup of the Weyl
group generated by the reflections $\left\{ s_{\alpha}\mid\alpha\in\Pi_{+}\right\} $.
Then

\emph{(i)} One has $\hat{\rho}-w\hat{\rho}\in\hat{Q}^{+}$ for all
$w\in W_{+}$.

\emph{(ii)} If $w=s_{\alpha_{i_{1}}}\cdot\ldots\cdot s_{\alpha_{i_{r}}}$is
reduced decomposition of $w\in W_{+}$, then 
\[
\mbox{ht}\left(\hat{\rho}-w\hat{\rho}\right)\ge\left|\left\{ j\mid\left\langle \hat{\rho},\alpha_{i_{j}}^{\vee}\right\rangle \ne0\right\} \right|.
\]

\end{lem}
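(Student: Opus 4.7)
The plan is to use the standard telescoping identity for $\hat{\rho} - w\hat{\rho}$ along a reduced decomposition, and then to invoke Coxeter-theoretic positivity of root sequences. Given $w \in W_+$ with reduced decomposition $w = s_{\alpha_{i_1}} \cdots s_{\alpha_{i_r}}$, I would set $w_k := s_{\alpha_{i_1}} \cdots s_{\alpha_{i_k}}$ (with $w_0 = 1$) and $\beta_k := w_{k-1} \alpha_{i_k}$. Using $s_{\alpha} \lambda = \lambda - \langle \lambda, \alpha^\vee \rangle \alpha$, one obtains the telescoping formula
\[
\hat{\rho} - w\hat{\rho} \;=\; \sum_{k=1}^{r}\bigl(w_{k-1} \hat{\rho} - w_k \hat{\rho}\bigr) \;=\; \sum_{k=1}^{r} \bigl\langle \hat{\rho}, \alpha_{i_k}^\vee \bigr\rangle \beta_k,
\]
which reduces both parts of the lemma to controlling the $\beta_k$ and the scalars $\langle \hat{\rho}, \alpha_{i_k}^\vee \rangle$.

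For part (i), each scalar $\langle \hat{\rho}, \alpha_{i_k}^\vee \rangle$ is a non-negative integer, by the defining condition on $\Pi_+$ together with the fact that $\hat{\rho}$ pairs integrally with principal coroots (this follows from $(\rho,\alpha)=(\alpha,\alpha)/2$ on $\pi$ and a bookkeeping of odd reflections). A standard Coxeter argument---the root sequence of a reduced expression consists of distinct positive real roots---then shows each $\beta_k \in \hat{Q}^+$, so every summand lies in $\hat{Q}^+$. For part (ii), I would take the height of the same identity: each $k$ with $\langle \hat{\rho}, \alpha_{i_k}^\vee \rangle \ne 0$ contributes at least $1 \cdot \mbox{ht}(\beta_k) \ge 1$, so $\mbox{ht}(\hat{\rho} - w\hat{\rho})$ is bounded below by $|\{k : \langle \hat{\rho}, \alpha_{i_k}^\vee \rangle \ne 0\}|$, which is exactly the claim.

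The hard part will be justifying the positivity of each $\beta_k$. The subtlety is that the principal roots in $\Pi_+$ are generally \emph{not} simple in one single base of $\hat{\mathfrak{g}}$; rather, each is simple in some base obtained from $\pi$ by odd reflections. Consequently $W_+$ is a reflection subgroup, not a standard parabolic subgroup, of the Kac--Moody Weyl group associated to the real even roots (cf.\ \cite{S}). One therefore needs to verify that $W_+$ carries a Coxeter structure with generators $\{s_\alpha \mid \alpha \in \Pi_+\}$ in which reduced expressions and the exchange condition behave compatibly with the ambient Weyl group---for instance via Dyer's theorem on reflection subgroups, or by a case-by-case check using the description of principal roots in Appendix \ref{sub:construction}. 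Once this structural fact is in hand, the telescoping formula and the integrality of $\hat{\rho}$ on principal coroots deliver both conclusions.
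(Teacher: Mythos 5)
The paper itself gives no proof of this lemma; it is quoted verbatim from \cite[1.3.2]{G1}, so there is nothing internal to compare against. Your telescoping argument
$\hat{\rho}-w\hat{\rho}=\sum_{k}\langle\hat{\rho},\alpha_{i_{k}}^{\vee}\rangle\,\beta_{k}$ with $\beta_{k}=w_{k-1}\alpha_{i_{k}}$, combined with the root-sequence lemma and non-negative integrality of $\langle\hat{\rho},\alpha^{\vee}\rangle$ on $\Pi_{+}$, is the standard proof and is essentially what Gorelik's argument amounts to; both (i) and (ii) follow exactly as you say. The one point worth correcting is the difficulty you flag at the end: it is not actually there. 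By \cite[5]{HS} and \cite[4.10]{S} --- a fact this paper itself invokes in the proof of Proposition \ref{pro:well defiinendness} --- the set of principal roots coincides with the set of simple roots of $\hat{\Delta}_{\bar{0}}^{+}$, so $\Pi_{+}$ is a subset of a genuine simple system for the even real root system and $W_{+}$ is a \emph{standard parabolic} subgroup of the corresponding Coxeter group, not a general reflection subgroup. Hence reduced expressions in $W_{+}$ are reduced in the ambient group, the roots $\beta_{k}$ are distinct elements of $\hat{\Delta}_{\bar{0}}^{+}\subset\hat{Q}^{+}$, and no appeal to Dyer's theorem or a case-by-case check is needed. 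With that observation your proof closes completely.
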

(iii) The stabilizer of $\hat{\rho}$ in $W_{+}$ is generated by
the reflections $\left\{ s_{\alpha}\mid\alpha\in\Pi_{+}\mbox{ and }\left\langle \hat{\rho},\alpha^{\vee}\right\rangle =0\right\} $.

\subsection{The translation group $T'$ and the denominator identity for basic
Lie superalgebras.\label{sub:W}}

We define the subgroups of the Weyl group including $T'$ that are
used in the proof of Theorem \ref{thm:identity} and normalize the
bilinear form. We shall then recall the denominator identity for basic
Lie superalgebras.
\begin{defn}
Let $\Delta_{1}$ and $\Delta_{2}$ be irreducible finite root systems
of Lie algebras. We say that $\Delta_{1}$ is larger than $\Delta_{2}$
if its rank is larger, or if the ranks are equal and $\Delta_{2}$
can be embedded in $\Delta_{1}$. 
\end{defn}
For a basic simple Lie superalgebra (excluding $D\left(2,1,\alpha\right)$),
the set of even roots $\Delta_{\bar{0}}$ is a root system of a reductive
Lie algebra which is a product of at most two irreducible root systems.
Let $\Delta^{\#}$ be the larger among the two and $W^{\#}$ its Weyl
group. If $h^{\vee}\ne0$, we take $\Delta'$ (resp. $\Delta''$)
to be the larger (resp. smaller) simple root subsystem and conversely
otherwise. If the two simple parts are isomorphic, we pick an arbitrary
choice. When they are incomparable ($C_{n}$ and $B_{n}$), we shall
specify for each case. For $D\left(2,1,\alpha\right)$, $\Delta_{\bar{0}}=A_{1}\sqcup A_{1}\sqcup A_{1}$
and we take $\Delta'=\Delta^{\#}=A_{1}\sqcup A_{1}$, $\Delta''=A_{1}$.

We normalize the bilinear form such that it is positive definite on
$\Delta'$. Let $\hat{\Delta}'$ (resp. $\hat{\Delta}''$) be the
maximal affine root subsystem of $\hat{\Delta}_{\bar{0}}$ with containing
$\Delta'$ (resp. $\Delta''$). The intersection $\hat{\Delta}'^{+}:=\hat{\Delta}'\cap\hat{\Delta}^{+}$
is a choice of positive roots for $\hat{\Delta}'$. Let $\hat{\pi}'$
be the corresponding set of simple roots and choose $\hat{\rho}'\in\mbox{span}\left(\hat{\pi}'\cup\left\{ \Lambda_{0}\right\} \right)$
such that $\left(\hat{\rho}',\alpha\right)=\frac{1}{2}\left(\alpha,\alpha\right)$
for every $\alpha\in\hat{\pi}'$. The root lattice of $\hat{\Delta}'$
is $\hat{Q}':=\mathbb{Z}\hat{\Delta}'$. Let $\pi''$ be the set of
simple roots of $\Delta''$ with respect to $\Delta''^{+}:=\Delta''\cap\Delta^{+}$.

Let $W'$ and $\hat{W}'$ be the Weyl group of $\Delta'$ and $\hat{\Delta}'$,
respectively. Let $M'$ be the lattice generated by $\left\{ \frac{2n}{\left(\alpha,\alpha\right)}\alpha\mid n\delta-\alpha\in\hat{\Delta}',\alpha\in\mathfrak{h}^{*}\right\} $
and $T'$ the abelian group generated by the translations 
\[
\left\{ t_{\alpha}\left(\lambda\right)=\lambda+\left(\lambda,\delta\right)\alpha-\left(\left(\lambda,\alpha\right)+\frac{1}{2}\left(\alpha,\alpha\right)\left(\lambda,\delta\right)\right)\delta\mid\alpha\in M',\,\lambda\in\mathfrak{h}^{*}\right\} .
\]
Note that $t_{\frac{2n}{\left(\alpha,\alpha\right)}\alpha}=s_{n\delta-\alpha}s_{\alpha}$.
Unless $\hat{W}'$ is of type $A_{2n-1}^{\left(2\right)}$ and $W'$
is of type $D_{k}$, we have that $T'\subset\hat{W}'$ (since in these
cases $n\delta-\alpha\in\hat{\Delta}'$ implies that $\alpha\in\mathbb{Q}\Delta'$)
and $\hat{W}'=W'\ltimes T'$ (see \cite[6.5]{K}). 

When $\hat{W}'$ is of type $A_{2n-1}^{\left(2\right)}$ and $W'$
is of type $D_{k}$ (that is, when $\hat{\mathfrak{g}}=A\left(2k-1,2l-1\right)^{\left(2\right)}$,
$k\ge l$), we have that $\Delta'=\left\{ \varepsilon_{i}\pm\varepsilon_{j}\right\} $
and $M=\mbox{span}_{\mathbb{Z}}\left\{ \varepsilon_{1},\ldots,\varepsilon_{n}\right\} $.
Let $W_{C_{k}}$ and $\hat{W}_{C_{k}}$ be the Weyl groups of $C_{k}$
and $\hat{C}_{k}$, respectively. Note that $W_{C_{k}}=\left\langle W',s_{\varepsilon_{k}}\right\rangle $,
$\hat{W}_{C_{k}}=\left\langle \hat{W}',s_{\varepsilon_{k}}\right\rangle $
and $s_{\varepsilon_{k}}$ corresponds to a diagram automorphism of
$\Delta'$. One has $\hat{W}_{C_{k}}=T'\rtimes W_{C_{k}}$. We can
check that the sign function can be extended from $\hat{W}'$ to $\hat{W}_{C_{k}}$
by setting $\mbox{sgn}s_{\varepsilon_{k}}=1$ (that is, $\mbox{sgn}t_{\varepsilon_{k}}=-1$).
The bilinear form $\left(\cdot,\cdot\right)$ is invariant under diagram
automorphisms and thus under $\hat{W}_{C_{k}}$. A similar idea for
extending the Weyl group was used in \cite[7]{FKP}.

We take $W''$ and $\hat{W}''$ to be the Weyl groups of $\Delta''$
and $\hat{\Delta}''$, respectively. Define $M''$ as $M'$ (replacing
$\hat{\Delta}'$ with $\hat{\Delta}''$) and $T''$ as we defined
$T'$ (replacing $M'$ with $M''$). 

The \emph{defect} of $\mathfrak{g}$ is the dimension of a maximal
isotropic subspace of $\mathfrak{h}_{\mathbb{R}}^{*}:=\sum_{\alpha\in\Delta}\mathbb{R}\alpha$.
A subset $S\subset\Delta_{\bar{1}}^{+}$ is called \emph{isotropic}
if it spans an isotropic subspace and \emph{maximal isotropic} if
moreover $\left|S\right|$ is equal to the defect. Given a maximal
isotropic set $S$, there exists a choice of simple roots $\pi$ such
that $S\subset\pi$ (\cite[2.2]{KW}). Thus, for the rest of the paper
we shall assume that our choice of simple roots contains a maximal
isotropic set. 

Within this setup, recall that the denominator identity for basic
Lie superalgebras, proven in \cite{KW,G2}, takes the following form:

\begin{equation}
e^{\rho}R=\sum_{w\in W^{\#}}\left(\mbox{sgn}w\right)w\left(\frac{e^{\rho}}{\prod_{\beta\in S}\left(1+e^{-\beta}\right)}\right).\label{eq:fin dim denom}
\end{equation}

\subsection{Algebras of formal power series.\label{sub:formal power series}}

We recall the definition of the algebra of formal power series in
which the equality of Theorem \ref{thm:identity} holds (see also
\cite[1.4]{G1}).
\begin{defn}
Let $\mathcal{R}$ be the $\mathbb{Q}$-vector space spanned by the
sums of the form $\sum_{\nu\in\hat{Q}^{+}}b_{\nu}e^{\lambda-\nu}$
where $\lambda\in\hat{\mathfrak{h}}^{*}$ and $b_{\nu}\in\mathbb{Q}$.
For $Y:=\sum_{\nu\in\hat{\mathfrak{h}}^{*}}b_{\nu}e^{\nu}\in\mathcal{R}$,
we define the support of $Y$ to be 
\[
\mbox{supp}\left(Y\right):=\left\{ \nu\in\hat{\mathfrak{h}}^{*}\mid b_{\nu}\ne0\right\} .
\]

\end{defn}
Note that the ring $\mathcal{R}$ is not closed under the action of
the Weyl group. Let $\tilde{W}$ be a subgroup of the Weyl group.
We define subrings $\mathcal{R}_{\tilde{W}}$ and $\mathcal{R}'$
of $\mathcal{R}$ which are closed under the action of $\tilde{W}$.

Let $\mathcal{R}_{\tilde{W}}$ be the subalgebra of $\mathcal{R}$
defined by 
\[
\mathcal{R}_{\tilde{W}}:=\left\{ \sum_{\nu\in\hat{\mathfrak{h}}^{*}}b_{\nu}e^{\nu}\in\mathcal{R}\mid\sum_{\nu\in\hat{\mathfrak{h}}^{*}}b_{\nu}e^{w\nu}\in\mathcal{R}\mbox{ for all }w\in\tilde{W}\right\} 
\]
and $\mathcal{R}'$ the localization of $\mathcal{R}_{\tilde{W}}$
by 
\[
\mathcal{Y}:=\left\{ \prod_{\alpha\in X}\left(1+a_{\alpha}e^{-\alpha}\right)^{r\left(\alpha\right)}\mid a_{\alpha}\in\mathbb{Q},r\left(\alpha\right)\in\mathbb{Z}_{\ge0}\mbox{ and }X\subset\hat{\Delta},\left|X\backslash\hat{\Delta}^{+}\right|<\infty\right\} .
\]
The elements of $\mathcal{Y}$ are invertible in $\mathcal{R}$ using
geometric series (for example $\left(1-e^{-\alpha}\right)^{-1}$$=-e^{-\alpha}\left(1-e^{\alpha}\right)^{-1}$\\
$=-\sum_{i=1}^{\infty}e^{i\alpha}$) and $\mathcal{Y}$ is contained
in $\mathcal{R}_{\tilde{W}}$ (see \cite[1.4.2]{G1}). We extend the
action of $\tilde{W}$ from $\mathcal{R}_{\tilde{W}}$ to $\mathcal{R}'$
by $w\left(Y^{-1}Y'\right)=\left(wY\right)^{-1}\left(wY'\right)$
(see \cite[1.4.3]{G1}).
\begin{rem}
\label{rem:maximal element}Note that the maximal element in the support
of $\prod_{\alpha\in X}\left(1+a_{\alpha}e^{-\alpha}\right)^{r\left(\alpha\right)}\in\mathcal{Y}$
is \newline$-\sum_{\alpha\in X\backslash\hat{\Delta}^{+}:a_{\alpha}\ne0}r\left(\alpha\right)\nu$.
\end{rem}
For $Y\in\mathcal{R}_{\tilde{W}}$ such that each $\tilde{W}$-orbit
in $\hat{\mathfrak{h}}^{*}$ has a finite intersection with $\mbox{supp}Y$,
denote the sum
\[
\mathcal{F}_{\tilde{W}}\left(Y\right):=\sum_{w\in\tilde{W}}\mbox{sgn}\left(w\right)wY.
\]
We use the following lemmas from \cite[1.4.4]{G1}:
\begin{lem}
Suppose $Y\in\mathcal{R}_{\tilde{W}}$ and $\mathcal{F}_{\tilde{W}}\left(Y\right)\in\mathcal{R}$,
then $\mathcal{F}_{\tilde{W}}\left(Y\right)\in\mathcal{R}_{\tilde{W}}$
and is $\tilde{W}$-anti-invariant\label{lem:anti invariant}
\end{lem}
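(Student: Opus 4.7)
The plan is to deduce both conclusions simultaneously from a single reindexing computation, which is the standard trick for showing that averages with signs are anti-invariant.

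First I would unpack how $\tilde{W}$ acts on $\mathcal{R}_{\tilde{W}}$: for $w_0 \in \tilde{W}$ and $Z=\sum_{\nu}c_{\nu}e^{\nu}\in\mathcal{R}_{\tilde{W}}$ we have, by definition, $w_0 Z=\sum_{\nu}c_{\nu}e^{w_0\nu}$, so the coefficient of $e^{\mu}$ in $w_0 Z$ equals the coefficient of $e^{w_0^{-1}\mu}$ in $Z$. I would then apply this relabelling coefficient-wise to $\mathcal{F}_{\tilde{W}}(Y)$, where the coefficient-extraction is legal term by term because of the hypothesis that each $\tilde{W}$-orbit meets $\mathrm{supp}(Y)$ in finitely many points; this makes the $\mu$-coefficient of $\mathcal{F}_{\tilde{W}}(Y)$ a finite sum $\sum_{w:\,w^{-1}\mu\in\mathrm{supp}(Y)}\mathrm{sgn}(w)b_{w^{-1}\mu}$, and similarly for $w_0\mathcal{F}_{\tilde{W}}(Y)$.

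The key step is the reindexing $w\mapsto w'=w_0 w$ together with the fact that $\mathrm{sgn}\colon\tilde{W}\to\{\pm 1\}$ is a homomorphism (so $\mathrm{sgn}(w_0^{-1}w')=\mathrm{sgn}(w_0)\mathrm{sgn}(w')$). Comparing coefficients of $e^{\mu}$ this yields the pointwise identity
\[
w_0\mathcal{F}_{\tilde{W}}(Y)=\mathrm{sgn}(w_0)\,\mathcal{F}_{\tilde{W}}(Y).
\]
The right-hand side lies in $\mathcal{R}$ by hypothesis, so the left-hand side does too for every $w_0\in\tilde{W}$; by the definition of $\mathcal{R}_{\tilde{W}}$ this gives $\mathcal{F}_{\tilde{W}}(Y)\in\mathcal{R}_{\tilde{W}}$, which is claim (i), and the displayed equation is exactly the statement of $\tilde{W}$-anti-invariance, which is claim (ii).

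The only obstacle worth mentioning is bookkeeping, namely verifying that the formal reindexing is actually legal: one must check that applying $w_0$ coefficient by coefficient to $\mathcal{F}_{\tilde{W}}(Y)$ agrees with the identity $\sum_{w}\mathrm{sgn}(w)w_0 wY$, and this hinges on the orbit-finiteness assumption (so that no infinite sum of monomials gets regrouped). Once that is noted the proof is essentially a one-line manipulation, and no properties of $\mathcal{Y}$ or of the basic-superalgebra denominator identity are needed here.
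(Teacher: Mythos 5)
Your proof is correct. The paper does not reprove this lemma (it is quoted from \cite[1.4.4]{G1}), and your coefficientwise reindexing $w\mapsto w_0w$ combined with $\mathrm{sgn}(w_0^{-1}w')=\mathrm{sgn}(w_0)\mathrm{sgn}(w')$ is exactly the standard argument behind the cited result; the one caveat---inherited from the paper's definition of $\mathcal{F}_{\tilde{W}}$ rather than introduced by you---is that the $\mu$-coefficient $\sum_{w:\,w^{-1}\mu\in\mathrm{supp}(Y)}\mathrm{sgn}(w)\,b_{w^{-1}\mu}$ is a finite sum only when the relevant stabilizers are finite, which holds wherever the lemma is applied.
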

A set is called \emph{$\tilde{W}$-regular} if for every element $\lambda$
in the set, $\mbox{Stab}_{\tilde{W}}\lambda$ is trivial.
\begin{lem}
The support of a $\tilde{W}$-anti-invariant element in $\mathcal{R}_{\tilde{W}}$
is a union of regular $\tilde{W}$-orbits.\label{lem:support union regular orbits}\end{lem}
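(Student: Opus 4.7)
The plan is to derive both required properties of $\mathrm{supp}(Y)$ from the anti-invariance identity $wY = \mathrm{sgn}(w)\, Y$ by comparing coefficients. Write $Y = \sum_{\nu} b_{\nu}\, e^{\nu}$ with $b_{\nu}\in\mathbb{Q}$. I need to verify two things: (a) $\mathrm{supp}(Y)$ is $\tilde{W}$-stable, and (b) for every $\lambda\in\mathrm{supp}(Y)$, the stabilizer $\mathrm{Stab}_{\tilde{W}}(\lambda)$ is trivial.

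For (a), fix $\lambda\in\mathrm{supp}(Y)$ and $w\in\tilde{W}$. Since $w$ acts injectively on $\hat{\mathfrak{h}}^*$, the only term in $wY = \sum_\nu b_\nu e^{w\nu}$ contributing to the $e^{w\lambda}$-coefficient is $b_{\lambda}e^{w\lambda}$, giving coefficient $b_{\lambda}$. On the other hand, $wY = \mathrm{sgn}(w)\,Y$ lies in $\mathcal{R}$ by hypothesis, and its $e^{w\lambda}$-coefficient equals $\mathrm{sgn}(w)\, b_{w\lambda}$. Equating the two expressions yields $b_{w\lambda} = \mathrm{sgn}(w)\, b_{\lambda} \neq 0$, so $w\lambda \in \mathrm{supp}(Y)$ and $\mathrm{supp}(Y)$ is $\tilde{W}$-invariant.

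For (b), apply the same calculation with $w\in\mathrm{Stab}_{\tilde{W}}(\lambda)$, i.e.\ $w\lambda = \lambda$. One obtains $b_{\lambda} = \mathrm{sgn}(w)\, b_{\lambda}$, and since $b_{\lambda}\neq 0$ this forces $\mathrm{sgn}(w) = 1$. Hence $\mathrm{Stab}_{\tilde{W}}(\lambda) \subseteq \ker\mathrm{sgn}$. To convert this into triviality I invoke the Coxeter-theoretic fact that pointwise stabilizers in an (affine) Weyl group are generated by the reflections fixing the point, together with the convention that every such reflection has $\mathrm{sgn} = -1$; the two together force $\mathrm{Stab}_{\tilde{W}}(\lambda) = \{1\}$, which is exactly the regularity condition.

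The main obstacle lies in this last step when $\tilde{W}$ is the extended group appearing in the $A(2k-1,2l-1)^{(2)}$ case, where the diagram automorphism $s_{\varepsilon_k}$ is adjoined with $\mathrm{sgn}\,s_{\varepsilon_k} = +1$. In that setting an element of $\ker\mathrm{sgn}$ is not automatically a product of reflections of sign $-1$, so the implication ``$\mathrm{Stab}(\lambda)\subseteq\ker\mathrm{sgn} \Rightarrow \mathrm{Stab}(\lambda)=\{1\}$'' requires additional justification. One way to handle this is to observe that $\hat{W}_{C_k} = T'\rtimes W_{C_k}$ is itself a standard affine Weyl group (of type $C_k$), so pointwise stabilizers there are again parabolic subgroups generated by reflections, all of which carry $\mathrm{sgn} = -1$ with respect to the natural sign of $\hat{W}_{C_k}$; the extension of the sign function adopted in Section~\ref{sub:W} is compatible with this, so the argument goes through and no new regularity failure is introduced.
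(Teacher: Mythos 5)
Your step (a) and the reduction of (b) to $\mathrm{Stab}_{\tilde{W}}(\lambda)\subseteq\ker(\mathrm{sgn})$ are correct, but the final step rests on a fact that is false in the generality you need, and this is exactly where the content of the lemma lies. The group $\tilde{W}$ acts on $\hat{\mathfrak{h}}^{*}$ by the \emph{linear} action of (a subgroup of) an affine Weyl group, not by its affine action on Euclidean space, and for this action pointwise stabilizers are generated by the reflections they contain only for weights in the Tits cone (essentially $(\lambda,\delta)>0$). On the level-zero hyperplane the claim fails: if $(\lambda,\delta)=0$ then $t_{\mu}(\lambda)=\lambda-(\lambda,\mu)\delta$, so every translation $t_{\mu}$ with $(\lambda,\mu)=0$ fixes $\lambda$; choosing $\lambda$ orthogonal to some nonzero $\mu\in M'$ but to no root of $\Delta'$ yields a nontrivial stabilizer consisting entirely of translations (all of sign $+1$) and containing no reflection, which your argument cannot exclude. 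What does exclude such $\lambda$ is the hypothesis $Y\in\mathcal{R}$, which you never use in (b): $\mathrm{supp}(Y)$ lies in a finite union of cones $\lambda_{i}-\hat{Q}^{+}$ and is therefore bounded above, while by your step (a) it contains the whole orbit $\tilde{W}\lambda$; for $\lambda$ of level zero not orthogonal to all of $\Delta'$ (and likewise for $(\lambda,\delta)<0$) that orbit contains elements $\lambda+N\delta-(\mbox{finite part})$ with $N\to+\infty$ and so cannot lie in any such finite union. Only after this reduction is $\lambda$ forced into the Tits cone, where the Steinberg-type generation theorem applies and your sign argument (a reflection in the stabilizer forces $b_{\lambda}=0$, a translation group acting with unbounded orbits is impossible) finishes the proof. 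Note that the paper itself does not prove this lemma but quotes it from \cite{G1}, where the argument does pass through the support condition.

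Your closing discussion of the $A\left(2k-1,2l-1\right)^{\left(2\right)}$ case is also problematic, in two ways. First, it is unnecessary: the lemma concerns subgroups $\tilde{W}$ of the Weyl group and is invoked in the paper only for $\hat{W}'$ and $\hat{W}''$, which are genuine reflection groups with the standard sign; the extended group $\hat{W}_{C_{k}}$ is outside its scope. Second, your proposed resolution is wrong: the extension of the sign function in Section \ref{sub:W} sets $\mathrm{sgn}\left(s_{\varepsilon_{k}}\right)=+1$ on the honest reflection $s_{\varepsilon_{k}}=s_{2\varepsilon_{k}}$ of $\hat{W}_{C_{k}}$, so it is precisely \emph{not} the natural Coxeter sign of $\hat{W}_{C_{k}}$, and a weight with $\left(\lambda,\varepsilon_{k}\right)=0$ has the nontrivial stabilizer $\left\{ 1,s_{\varepsilon_{k}}\right\} \subseteq\ker(\mathrm{sgn})$ that your sign test does not detect. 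With the extended sign the conclusion of the lemma genuinely can fail, so it should not be asserted for $\hat{W}_{C_{k}}$.
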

\begin{rem}
\label{rem:anti invariant R1}In order to use the above lemmas for
$\hat{R}$ and $\mathcal{F}_{T'}\left(e^{\hat{\rho}}R\right)\in\mathcal{R}'$
, we will multiply them by $\hat{R}_{1}\in\mathcal{Y}$. Note that
since $e^{\hat{\rho}}\hat{R}$ and $e^{\hat{\rho}'}\hat{R}_{\bar{0}}$
are $\hat{W}'$-skew invariant elements of $\mathcal{R}_{\hat{W}'}$,
$e^{\hat{\rho}'-\hat{\rho}}\hat{R}_{1}$ is $\hat{W}'$-invariant.
Similarly $e^{\hat{\rho}''-\hat{\rho}}\hat{R}_{1}$ is $\hat{W}''$-invariant.
\end{rem}

\section{main argument\label{sec:Domain U}}

In this section we use the Casimir operator and the denominator identity
for finite dimensional Lie superalgebras to show that the supports
of $e^{\hat{\rho}}\hat{R}$ and $\mathcal{F}_{T'}\left(e^{\hat{\rho}}R\right)$
belong to the following subset of $\hat{\mathfrak{h}}^{*}$:
\[
U:=\left\{ \mu\in\hat{\mathfrak{h}}^{*}\mid\left(\mu,\mu\right)=\left(\hat{\rho},\hat{\rho}\right)\right\} .
\]

Let us explain how this argument is used to prove Theorem \ref{thm:identity}.
First, using $\hat{W}'$-anti-invariance (namely lemmas \ref{lem:anti invariant}
and \ref{lem:support union regular orbits}), we get that it is sufficient
to check the denominator identity only on a small subset of the coefficients.
The fact that these are coefficients of elements in $U$, implies
that one has to check the identity on even fewer coefficients. When
$h^{\vee}\ne0$, the rest of the proof amounts to comparing the coefficient
of $e^{\hat{\rho}}$ on both sides of the identity. When $h^{\vee}=0$,
one should calculate the coefficients of the powers of $q$. This
is carried out in sections \ref{sec:h ne 0} and \ref{sec:h=00003D0},
respectively.

We shall use the following classical lemma (see for example \cite[10.4]{K}).
\begin{lem}
One has $\mbox{\emph{supp}}\left(e^{\hat{\rho}}\hat{R}\right)\subset U$. \end{lem}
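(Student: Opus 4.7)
The plan is to reformulate the claim as the vanishing of a Laplacian-type operator, and then derive that vanishing from the action of the quadratic Casimir $\Omega$ of $\hat{\mathfrak{g}}$. I will introduce $\mathcal{L}\colon\mathcal{R}\to\mathcal{R}$ on basis elements by $\mathcal{L}(e^{\mu}):=(\mu,\mu)\,e^{\mu}$; since $\mathcal{L}(e^{\mu})=(\hat{\rho},\hat{\rho})\,e^{\mu}$ if and only if $\mu\in U$, the statement $\mbox{supp}(e^{\hat{\rho}}\hat{R})\subset U$ is equivalent to the formal identity $\mathcal{L}(e^{\hat{\rho}}\hat{R})=(\hat{\rho},\hat{\rho})\cdot e^{\hat{\rho}}\hat{R}$, checked coefficient by coefficient.

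The key input will be the Verma module $M(0)$: its formal character is $\mbox{ch}\,M(0)=1/\hat{R}$, and $\Omega$ acts on $M(0)$ by the scalar $(0+\hat{\rho},0+\hat{\rho})-(\hat{\rho},\hat{\rho})=0$. Writing $\Omega=\Omega_{\hat{\mathfrak{h}}}+2\sum_{\alpha\in\hat{\Delta}^{+}}(-1)^{p(\alpha)}e_{-\alpha}e_{\alpha}$ in a PBW basis and using that $\Omega_{\hat{\mathfrak{h}}}$ acts on a weight vector $v_{\mu}\in M(0)_{\mu}$ by $(\mu,\mu+2\hat{\rho})$, the equation $\Omega|_{M(0)}=0$ will translate into a power-series identity for $1/\hat{R}$ involving $\mathcal{L}(1/\hat{R})$ and a first-order derivation term. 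Combining this with the trivial factorization $(e^{\hat{\rho}}\hat{R})\cdot(1/\hat{R})=e^{\hat{\rho}}$ (which satisfies $\mathcal{L}(e^{\hat{\rho}})=(\hat{\rho},\hat{\rho})e^{\hat{\rho}}$) via the Leibniz-type rule $\mathcal{L}(AB)=\mathcal{L}(A)B+A\,\mathcal{L}(B)+2D(A,B)$, where $D(e^{\mu},e^{\nu})=(\mu,\nu)e^{\mu+\nu}$, will then isolate $\mathcal{L}(e^{\hat{\rho}}\hat{R})=(\hat{\rho},\hat{\rho})\cdot e^{\hat{\rho}}\hat{R}$, as required.

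The main obstacle I anticipate is the careful handling of the infinite-product expansion of $\hat{R}$ coming from the imaginary roots, together with the parity signs of odd roots in the PBW decomposition of $\Omega$. These are the standard technical issues addressed in \cite[10.4]{K} for the non-super Kac-Moody case; the adjustments required for the twisted affine super setting combine those used in \cite{G1} for the non-twisted super case with the $\mathbb{Z}_{m}$-graded factorization of $\hat{R}$ recorded in Section \ref{sub:affine defs}.
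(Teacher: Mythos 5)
Your starting point (the Casimir of $\hat{\mathfrak{g}}$ and the Verma module picture) is the right one, but the mechanism you build on it is not the one the paper uses, and it has a genuine gap at its central step. The paper's proof is the linkage argument: by \cite[9.8]{K}, the character of the trivial module, $1=\mbox{ch}L(0)$, is an integral combination $\sum a_{\lambda}\mbox{ch}M(\lambda)$ over $\lambda\in-\hat{Q}^{+}$ with $(\lambda+\hat{\rho},\lambda+\hat{\rho})=(\hat{\rho},\hat{\rho})$, because the Casimir eigenvalue must agree on all composition factors; since $\mbox{ch}M(\lambda)=e^{\lambda}\hat{R}^{-1}$, multiplying through by $e^{\hat{\rho}}\hat{R}$ gives $e^{\hat{\rho}}\hat{R}=\sum a_{\lambda}e^{\lambda+\hat{\rho}}$ with every exponent in $U$. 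No Laplacian identity is needed.

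The gap in your route is the assertion that ``$\Omega|_{M(0)}=0$ translates into a power-series identity for $1/\hat{R}$ involving $\mathcal{L}(1/\hat{R})$ and a first-order derivation term.'' The off-Cartan part $2\sum_{\alpha\in\hat{\Delta}^{+}}(-1)^{p(\alpha)}e_{-\alpha}e_{\alpha}$ of $\Omega$ preserves weight spaces but does not act on them by scalars, and its trace on $M(0)_{\mu}$ is not a function of the character alone: expressing $\mathrm{tr}_{M(0)_{\mu}}(e_{-\alpha}e_{\alpha})$ through weight multiplicities requires the Freudenthal-type root-string recursion ($\mathfrak{sl}_{2}$-theory for anisotropic roots, $\mathfrak{sl}(1|1)$-type analysis for isotropic odd roots where $(\alpha,\alpha)=0$, and a separate treatment of imaginary roots of both parities with multiplicities). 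That recursion is the real content of the step you treat as automatic, it is harder than the lemma being proved, and it is not what \cite[10.4]{K} does, so the citation does not cover it. Moreover, even granting such an identity for $\mbox{ch}M(0)$, the final Leibniz step does not close up as stated: with $A=e^{\hat{\rho}}\hat{R}$ and $B=\hat{R}^{-1}$, the cross term $D(A,B)$ couples $A$ and $B$, so knowing $\mathcal{L}(AB)=(\hat{\rho},\hat{\rho})AB$ together with an identity for $\mathcal{L}(B)$ does not formally isolate $\mathcal{L}(A)=(\hat{\rho},\hat{\rho})A$; you would additionally have to verify $A\,\mathcal{L}(B)+2D(A,B)=0$, which again uses the explicit product structure of $\hat{R}$ rather than the two eigenvalue equations. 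I recommend replacing the whole mechanism by the linkage argument above.
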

\begin{proof}
Since $\hat{\mathfrak{g}}$ admits a Casimir element, the character
of the trivial $\hat{\mathfrak{g}}$-module is an integral linear
combination of the characters of Verma $\hat{\mathfrak{g}}$-modules
$M\left(\lambda\right)$, where $\lambda\in-\hat{Q}^{+}$, are such
that $\left(\lambda+\hat{\rho},\lambda+\hat{\rho}\right)=\left(\hat{\rho},\hat{\rho}\right)$
(see \cite[9.8]{K}). Since the character of $M\left(\lambda\right)$
is equal to $\hat{R}^{-1}e^{\lambda}$, we obtain 
\[
1=\sum_{\lambda\in-\hat{Q}^{+},\left(\lambda+\hat{\rho},\lambda+\hat{\rho}\right)=\left(\hat{\rho},\hat{\rho}\right)}a_{\lambda}\mbox{ch}M(\lambda)=\sum_{\lambda\in-\hat{Q}^{+},\left(\lambda+\hat{\rho},\lambda+\hat{\rho}\right)=\left(\hat{\rho},\hat{\rho}\right)}a_{\lambda}e^{\lambda}\hat{R}^{-1}
\]
where $a_{\lambda}\in\mathbb{Z}$. This can be rewritten as
\[
\hat{R}e^{\hat{\rho}}=\sum_{\lambda\in\hat{\rho}-\hat{Q}^{+},\left(\lambda,\lambda\right)=\left(\hat{\rho},\hat{\rho}\right)}a_{\lambda}e^{\lambda},
\]
that is $\mbox{supp}\left(e^{\hat{\rho}}\hat{R}\right)\subset U$. \end{proof}
\begin{lem}
\label{lem:The-support-of}The support of $\mathcal{F}_{T'}\left(e^{\hat{\rho}}R\right)$
is contained in $U$.\end{lem}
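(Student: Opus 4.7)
The plan is to reduce to the finite-dimensional denominator identity (\ref{eq:fin dim denom}) for $\mathfrak g$, show directly that $\mbox{supp}(e^{\hat\rho}R)\subset U$, and then observe that $T'$ preserves $U$ because it acts by isometries of $(\cdot,\cdot)$.

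First I would split off the $\Lambda_0$ component by writing $e^{\hat\rho}R = e^{h^\vee\Lambda_0}\cdot e^\rho R$. Substituting (\ref{eq:fin dim denom}) shows that every element of $\mbox{supp}(e^{\hat\rho}R)$ has the form $\mu = h^\vee\Lambda_0 + w(\rho - \sigma)$ with $w\in W^\#$ and $\sigma = \sum_{\beta\in S}k_\beta\beta$, $k_\beta\in\mathbb{Z}_{\ge 0}$. I would then evaluate $(\mu,\mu)$ in three stages. Isotropy of the maximal isotropic set $S$ gives $(\beta,\beta') = 0$ for all $\beta,\beta'\in S$, hence $(\sigma,\sigma) = 0$; the defining property of $\rho$ combined with $(\beta,\beta)=0$ gives $(\rho,\beta) = \tfrac12(\beta,\beta) = 0$ for $\beta\in S$, hence $(\rho,\sigma) = 0$. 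Together these yield $(\rho-\sigma,\rho-\sigma) = (\rho,\rho)$, and this norm is preserved by $W^\#$ since the Weyl group is an isometry group of $(\cdot,\cdot)$. Finally, the relations $(\Lambda_0,\Lambda_0) = 0$ and $(\Lambda_0,\mathfrak{h}^*) = 0$ imply $(h^\vee\Lambda_0 + \mu',h^\vee\Lambda_0 + \mu') = (\mu',\mu')$ for every $\mu'\in\mathfrak h^*$; applied to both the support elements and to $\hat\rho = h^\vee\Lambda_0 + \rho$ itself, this gives $(\mu,\mu) = (\hat\rho,\hat\rho)$, i.e.\ $\mu\in U$.

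For the translation step, each generator $t_\alpha$ of $T'$ either lies in $\hat W'$ (and is therefore an isometry of $(\cdot,\cdot)$ by the usual Weyl-group argument) or, in the exceptional case $A(2k-1,2l-1)^{(2)}$, is a product of an element of $\hat W'$ with the diagram automorphism $s_{\varepsilon_k}$, which was observed in \ref{sub:W} to preserve the form. Alternatively the isometry property can be verified directly from the explicit formula for $t_\alpha$ using $(\delta,\delta) = 0 = (\alpha,\delta)$. Consequently $t\cdot U = U$ for every $t\in T'$, so $\mbox{supp}\bigl(t(e^{\hat\rho}R)\bigr)\subset U$, and summing over $T'$ gives $\mbox{supp}\,\mathcal F_{T'}(e^{\hat\rho}R)\subset U$.

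I do not anticipate a real obstacle here; the whole argument is an isotropy/orthogonality calculation combined with invariance of $(\cdot,\cdot)$ under the extended Weyl group. The only small caveat is the extended-Weyl-group bookkeeping in the $A(2k-1,2l-1)^{(2)}$ case, which is already settled in \ref{sub:W}.
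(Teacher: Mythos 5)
Your argument is correct and is essentially the paper's own proof: both reduce to the finite-dimensional denominator identity (\ref{eq:fin dim denom}), observe that each support element lies in $tw\hat{\rho}+\mathbb{Z}\left\{ tw\beta\mid\beta\in S\right\}$, and conclude from the isotropy of $S$, the vanishing $\left(\hat{\rho},\beta\right)=0$, and the invariance of $\left(\cdot,\cdot\right)$ under $W^{\#}$ and $T'$. The only (immaterial) difference is that you compute the norm before applying $t$ rather than after, and you spell out the $\Lambda_{0}$-bookkeeping and the $A\left(2k-1,2l-1\right)^{\left(2\right)}$ caveat that the paper leaves implicit.
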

\begin{proof}
Let us show that for every $t\in T'$, the support of $t\left(e^{\hat{\rho}}R\right)$
is contained in $U.$ Recall that the denominator identity of finite
dimensional Lie superalgebras takes the form
\[
e^{\rho}R=\mathcal{F}_{W^{\#}}\left(\frac{e^{\rho}}{\prod_{\beta\in S}\left(1+e^{-\beta}\right)}\right)
\]
where $S$ is a maximal isotropic set of roots and the set of simple
roots of $\mathfrak{g}$ (which determines $\rho$) is assumed to
contain $S$ (see Section \ref{sub:W}). Since $\hat{\rho}-\rho$
is $W^{\#}$-invariant, we have 
\begin{eqnarray*}
t\left(e^{\hat{\rho}}R\right) & = & t\mathcal{F}_{W^{\#}}\left(\frac{e^{\hat{\rho}}}{\prod_{\beta\in S}\left(1+e^{-\beta}\right)}\right)\\
 & = & \sum_{w\in W^{\#}}\left(\mbox{sgn}w\right)\frac{e^{tw\hat{\rho}}}{\prod_{\beta\in S}\left(1+e^{-tw\beta}\right)}.
\end{eqnarray*}
For each $w\in W^{\#}$, the support of $e^{tw\hat{\rho}}\cdot\prod_{\beta\in S}\left(1+e^{-tw\beta}\right)^{-1}$
is contained in $tw\hat{\rho}+\mathbb{Z}\left\{ tw\beta\mid\beta\in S\right\} $
(see Remark \ref{rem:maximal element}). Since $\left(\hat{\rho},\beta\right)=0$
for all $\beta\in S$ and $\left(\cdot,\cdot\right)$ is invariant
under the action of the Weyl group and $T'$, the assertion follows.
\end{proof}

\section{Proof of the Denominator Identity, $h^{\vee}\ne0$\label{sec:h ne 0}}

We prove Theorem \ref{thm:identity} for the case $h^{\vee}\ne0$
in three steps. We follow the proof of \cite{G1}. The first step
of the proof is to show that the right hand side of the identity is
a well defined element of $\mathcal{R}'$. For this we use the properties
of roots systems described described in Proposition \ref{pro:properties of simple roots}).
The second step is to show that the support of the difference between
the two sides of the equation admits at most one maximal element which
is $\hat{\rho}$. The third step of the proof is to show that the
coefficient of $e^{\hat{\rho}}$ in both sides is $1$.

\subsection{Another form of the denominator identity}

We first rewrite $\mathcal{F}_{T'}\left(e^{\hat{\rho}}R\right)$ using
the denominator identity for finite dimensional Lie superalgebras
and use this form to prove Theorem \ref{thm:identity}.

Recall that $\pi$ contains a maximal set of isotropic roots $S$
(see Section \ref{sub:W}). In the first step of the proof we show
that $\mathcal{F}_{\hat{W}'}\left(e^{\hat{\rho}}\prod_{\beta\in S}\left(1+e^{-\beta}\right)^{-1}\right)$
is well defined. Recall that for $\hat{\mathfrak{g}}\ne A\left(2k-1,2l-1\right)$,
$k\ge l+1$, $\hat{W}'=T'\rtimes W'$. Then, using the denominator
identity for basic Lie superalgebras (\ref{eq:fin dim denom}), we
have that
\begin{eqnarray}
\mathcal{F}_{T'}\left(e^{\hat{\rho}}R\right) & = & \mathcal{F}_{T'}\left(e^{h^{\vee}\Lambda_{0}}e^{\rho}R\right)\nonumber \\
 & = & \mathcal{F}_{T'}\left(e^{h^{\vee}\Lambda_{0}}\cdot\mathcal{F}_{W'}\left(\frac{e^{\rho}}{\prod_{\beta\in S}\left(1+e^{-\beta}\right)}\right)\right)\nonumber \\
 & = & \mathcal{F}_{T'}\left(\mathcal{F}_{W'}\left(\frac{e^{h^{\vee}\Lambda_{0}+\rho}}{\prod_{\beta\in S}\left(1+e^{-\beta}\right)}\right)\right)\label{eq:other form}\\
 & = & \mathcal{F}_{\hat{W}'}\left(\frac{e^{\hat{\rho}}}{\prod_{\beta\in S}\left(1+e^{-\beta}\right)}\right).\nonumber 
\end{eqnarray}
For $\hat{\mathfrak{g}}=A\left(2k-1,2l-1\right)$, $k\ge l+1$, $\hat{W}'\not\supset T'=\mbox{span}_{\mathbb{Z}}\left\{ \varepsilon_{1},\ldots,\varepsilon_{k}\right\} $
(see Section \ref{sub:W}). We have that $\hat{W}'$ and $W'$ are
subgroups of index $2$ in $\hat{W}_{C_{k}}$ and $W_{C_{k}}$, respectively.
Recall that $\hat{W}_{C_{k}}=T'\rtimes W_{C_{k}}$ and the sign function
is extended from $\hat{W}'$ to $\hat{W}_{C_{k}}$ such that $\mbox{sgn}s_{\varepsilon_{k}}=1$.
Note that $S$ and $\hat{\rho}$ are $s_{\varepsilon_{k}}$-invariant
and hence 
\begin{eqnarray}
\mathcal{F}_{T'}\left(e^{\hat{\rho}}R\right) & = & \mathcal{F}_{T'}\left(e^{h^{\vee}\Lambda_{0}}\cdot\mathcal{F}_{W'}\left(\frac{e^{\rho}}{\prod_{\beta\in S}\left(1+e^{-\beta}\right)}\right)\right)\nonumber \\
 & = & \mathcal{F}_{T'}\left(e^{h^{\vee}\Lambda_{0}}\cdot\mathcal{F}_{W'}\left(\frac{1}{2}\left(\frac{e^{\rho}}{\prod_{\beta\in S}\left(1+e^{-\beta}\right)}+s_{2\varepsilon_{i}}\frac{e^{\rho}}{\prod_{\beta\in S}\left(1+e^{-\beta}\right)}\right)\right)\right)\nonumber \\
 & = & \frac{1}{2}\mathcal{F}_{T'}\left(e^{h^{\vee}\Lambda_{0}}\cdot\mathcal{F}_{W_{C_{k}}}\left(\frac{e^{\rho}}{\prod_{\beta\in S}\left(1+e^{-\beta}\right)}\right)\right)\nonumber \\
 & = & \frac{1}{2}\mathcal{F}_{\hat{W}_{C_{k}}}\left(\frac{e^{\hat{\rho}}}{\prod_{\beta\in S}\left(1+e^{-\beta}\right)}\right)\label{eq:other form case A(2k-1,2l-1)}\\
 & = & \frac{1}{2}\mathcal{F}_{\hat{W}'}\left(\frac{e^{\hat{\rho}}}{\prod_{\beta\in S}\left(1+e^{-\beta}\right)}+s_{2\varepsilon_{i}}\frac{e^{\hat{\rho}}}{\prod_{\beta\in S}\left(1+e^{-\beta}\right)}\right)\nonumber \\
 & = & \mathcal{F}_{\hat{W}'}\left(\frac{e^{\hat{\rho}}}{\prod_{\beta\in S}\left(1+e^{-\beta}\right)}\right)\nonumber 
\end{eqnarray}

We get that the denominator identity can be written as
\[
e^{\hat{\rho}}\hat{R}=\mathcal{F}_{\hat{W}'}\left(\frac{e^{\hat{\rho}}}{\prod_{\beta\in S}\left(1+e^{-\beta}\right)}\right).
\]
The fact that $\mathcal{F}_{\hat{W}'}\left(e^{\hat{\rho}}\prod_{\beta\in S}\left(1+e^{-\beta}\right)^{-1}\right)$
is well defined implies that so is $\mathcal{F}_{T'}\left(e^{\hat{\rho}}R\right)$.
However, the converse implication does not hold since it is not necessarily
possible to open the parenthesis in the third equality of (\ref{eq:other form})
and the fourth equality of (\ref{eq:other form case A(2k-1,2l-1)}).
In fact, neither $\mathcal{F}_{\hat{W}'}\left(e^{\hat{\rho}}\prod_{\beta\in S}\left(1+e^{-\beta}\right)^{-1}\right)$
nor $\mathcal{F}_{\hat{W}''}\left(e^{\hat{\rho}}\prod_{\beta\in S}\left(1+e^{-\beta}\right)^{-1}\right)$
is well defined for $\hat{\mathfrak{g}}=A\left(2k-1,2k-1\right)^{\left(2\right)}$.

\subsection{Choice of simple roots}

For the rest of this section we will assume that our choice of simple
roots satisfies the conditions of the following proposition. The proof
of existence of such choices is given in Appendix \ref{sub:Appendix description of simple roots}.
\begin{prop}
\label{pro:properties of simple roots}For every twisted affine Lie
superalgebra with $h^{\vee}\ne0$, there exists a choice of set of
simple roots $\hat{\pi}$ such that

\emph{(i)} for all $\alpha\in\hat{\pi}$, $\left(\alpha,\alpha\right)\ge0$;

\emph{(ii)} for $\hat{\mathfrak{g}}\ne A\left(2k,2k+1\right)^{\left(2\right)},\, A\left(2k,2k-1\right)^{\left(2\right)}$
and \textbf{$G\left(3\right)^{\left(2\right)}$}, one has $\left(\alpha_{0},\alpha_{0}\right)>0$. \end{prop}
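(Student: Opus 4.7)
My plan is a case-by-case verification driven by the classification of twisted affine Lie superalgebras recalled in the introduction and the explicit root system data from Appendix \ref{sub:construction}. The two conditions are independent at the level of the argument: condition (i) is a constraint on the whole of $\hat{\pi}=\{\alpha_0\}\cup\pi$, while condition (ii) is a condition on $\alpha_0$ only, and the latter reduces to an intrinsic statement about $\theta$. So I would first record a reduction for (ii) and then assemble, for each family, a single choice of $\pi$ that settles both conditions simultaneously.

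For the reduction on (ii), since $\delta$ is isotropic and orthogonal to $\mathfrak{h}^{*}$ in the normalization fixed in \ref{sub:affine defs}, one has
\[
(\alpha_0,\alpha_0)=(\delta-\theta,\delta-\theta)=(\theta,\theta),
\]
so proving $(\alpha_0,\alpha_0)>0$ is the same as proving that the highest weight $\theta$ of $\Delta_{\bar{0}}^{(1)}\cup\Delta_{\bar{1}}^{(1)}$ is non-isotropic, while $(\alpha_0,\alpha_0)\ge 0$ is automatic once $\theta$ is either even or isotropic. Consequently the hard part of (ii) is really an identification of $\theta$ inside the $\mathbb{Z}_m$-graded root system of $\tilde{\mathfrak{g}}$.

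For each family $A(2k-1,2l-1)^{(2)}$, $A(2k,2l-1)^{(2)}$, $A(2k,2l)^{(4)}$, $C(k+1)^{(2)}$, $D(k+1,l)^{(2)}$ and $G(3)^{(2)}$, I would then proceed as follows. Using the $\varepsilon$--$\delta$ coordinates for $\mathfrak{h}^{*}$ in Appendix \ref{sub:construction}, I would write down an explicit $\pi$ for the fixed-point subalgebra $\mathfrak{g}$ that contains a maximal isotropic subset $S$ (such a choice exists by \cite[2.2]{KW}), modeled on the standard "distinguished" choices for $\mathfrak{gl}(m|n)$ and $\mathfrak{osp}(m|2n)$: the isotropic simple roots take the form $\varepsilon_i-\delta_j$ and the remaining simple roots are either short/long even roots of the classical subsystem or odd non-isotropic roots appearing in types $B$ and $B(0,n)$. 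Each such simple root manifestly has $(\alpha,\alpha)\ge 0$ under the normalization of \ref{sub:W}, which verifies (i) for $\pi$. Then I identify $\theta$ from the explicit lists of $\Delta_{\bar{0}}^{(1)}\cup\Delta_{\bar{1}}^{(1)}$: in the three exceptional families $A(2k,2k+1)^{(2)}$, $A(2k,2k-1)^{(2)}$ and $G(3)^{(2)}$ one finds that the maximal element is an odd isotropic root, giving $(\alpha_0,\alpha_0)=0$ (still non-negative, so (i) is preserved); in all other cases $\theta$ is a (long) even root, which gives $(\alpha_0,\alpha_0)>0$, proving (ii).

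The main obstacle I expect is purely combinatorial bookkeeping: the position of the highest weight $\theta$ in $\Delta_{\bar{0}}^{(1)}\cup\Delta_{\bar{1}}^{(1)}$ depends sensitively on the $\mathbb{Z}_m$-grading arising from $\sigma$, and in the borderline "diagonal" cases $A(2k,2k\pm 1)^{(2)}$ and $G(3)^{(2)}$ one must verify that every even candidate for $\theta$ is dominated by an odd isotropic element, which is what forces $(\alpha_0,\alpha_0)=0$ and explains the three exclusions in (ii). Once the right $\pi$ is written down and the top weight of $\tilde{\mathfrak{g}}_{(1)}$ is correctly identified in each family, both claims become inspections.
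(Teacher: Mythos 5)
Your plan is essentially the paper's own proof: the paper establishes the proposition by exhibiting, in Table 2 of the appendix, an explicit set of simple roots for each twisted affine family and verifying (i) and (ii) by inspection of the norms, with the three excluded cases being exactly those where $\alpha_{0}=\delta-\theta$ has isotropic $\theta$ (e.g.\ $\theta=\varepsilon_{1}+\delta_{1}$), just as you predict via the reduction $\left(\alpha_{0},\alpha_{0}\right)=\left(\theta,\theta\right)$. One small caution: your claim that $\left(\alpha_{0},\alpha_{0}\right)\ge0$ is automatic whenever $\theta$ is even is not true in general, since even roots lying in $\Delta''$ have negative norm under the normalization of Section \ref{sub:W}, so the non-negativity of $\left(\theta,\theta\right)$ must itself be confirmed case by case --- which your final inspection of each family does anyway.
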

\begin{rem}
\label{lem:positiveness}Suppose that the set of simple roots satisfies
(i), then $\left(\hat{\rho},\hat{Q}^{+}\right)\ge0$.
\end{rem}

Suppose that we fix the {}``finite part'' $\mathfrak{g}$ in $\hat{\mathfrak{g}}$,
then $e^{\hat{\rho}}\hat{R}$ and $e^{\hat{\rho}}R$ are independent
of the choice of simple roots. Thus, it suffices to prove Theorem
\ref{thm:identity} for a choice satisfying the conditions of Proposition
\ref{pro:properties of simple roots}. However, the choice of the
finite part is not unique. For example, the Lie algebra $B_{l}^{\left(1\right)}$
has two non-isomorphic maximal finite subalgebras, namely $B_{l}$
and $D_{l}$ each obtained by removing one vertex from the Dynkin
diagram of $B_{l}^{\left(1\right)}$. The situation is more complicated
for Lie superalgebras since the Dynkin diagram is not unique and one
can obtain different finite parts by removing a vertex in different
diagrams

The denominator identity for non-twisted Lie superalgebras (\cite{G1,GR})
also depends on the choice of the finite part. However, for twisted
algebras this problem is more significant since the choice of the
finite part is not canonical. Consider for example the first set of
simple roots described in Table 2. One can see that if we take the
finite part that corresponds to the Dynkin diagram of $\hat{\pi}\backslash\left\{ \varepsilon_{k}\right\} $,
the conditions of Proposition \ref{pro:properties of simple roots}
still hold and hence our proof for Theorem \ref{thm:identity} applies.
However, it is not clear whether one can also do so for other choices
of simple roots. It is interesting to look for a proof of the denominator
identity (twisted and not-twisted) that is independent of the choice
of simple roots and hence will apply to all finite parts.

\subsection{Step I }

Let us show that $\mathcal{F}_{\hat{W}'}\left(e^{\hat{\rho}}R\right)$
is well defined. By Section \ref{eq:other form}, $\mathcal{F}_{T'}\left(e^{\hat{\rho}}R\right)$
is well-defined as well. 
\begin{prop}
\label{pro:well defiinendness}The formal sum $\mathcal{F}_{\hat{W}'}\left(\frac{e^{\hat{\rho}}}{\prod_{\beta\in S}\left(1+e^{-\beta}\right)}\right)$
is in $\mathcal{R}'$ and its support lies in $\hat{\rho}-\hat{Q}^{+}$.\end{prop}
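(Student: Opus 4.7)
The plan has two complementary components: show that each summand $w(e^{\hat{\rho}}/\prod_{\beta\in S}(1+e^{-\beta}))$ is a well-defined element of $\mathcal{R}$ with support in $w\hat{\rho}-\hat{Q}^{+}$, and show that $w\hat{\rho}\le\hat{\rho}$ together with a finiteness estimate forcing only finitely many $w\in\hat{W}'$ to contribute at each weight.

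First, one writes
\[
w\!\left(\frac{e^{\hat{\rho}}}{\prod_{\beta\in S}(1+e^{-\beta})}\right)=\frac{e^{w\hat{\rho}}}{\prod_{\beta\in S}(1+e^{-w\beta})}.
\]
Since $|S|<\infty$, only finitely many $w\beta$ fall outside $\hat{\Delta}^{+}$, so the denominator lies in $\mathcal{Y}$ and its inverse exists in $\mathcal{R}$. Expanding $(1+e^{-w\beta})^{-1}$ as $\sum_{n\ge0}(-e^{-w\beta})^{n}$ when $w\beta\in\hat{\Delta}^{+}$, and as $\sum_{n\ge1}(-1)^{n-1}e^{nw\beta}$ when $-w\beta\in\hat{\Delta}^{+}$, each factor has support in $-\hat{Q}^{+}\cup\{0\}$, so the total support lies in $w\hat{\rho}-\hat{Q}^{+}$.

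Next I would establish $\hat{\rho}-w\hat{\rho}\in\hat{Q}^{+}$ for every $w\in\hat{W}'$. Each simple root $\alpha\in\hat{\pi}'$ of $\hat{\Delta}'$ is positive with respect to $\hat{\pi}$, so Remark \ref{lem:positiveness} gives $(\hat{\rho},\alpha)\ge0$, i.e., $\langle\hat{\rho},\alpha^{\vee}\rangle\ge0$. Thus $\hat{\pi}'\subset\Pi_{+}$, $\hat{W}'\subset W_{+}$, and Lemma \ref{lem:WeylGroupAction}(i) yields the claim. Combining the two components, the support of each summand lies in $\hat{\rho}-\hat{Q}^{+}$.

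The main obstacle is to show that the sum converges in $\mathcal{R}$: each $\mu\in\hat{\rho}-\hat{Q}^{+}$ must receive contributions from only finitely many $w$. Lemma \ref{lem:WeylGroupAction}(ii) bounds $\mathrm{ht}(\hat{\rho}-w\hat{\rho})$ from below by the number of non-stabilizing reflections in a reduced decomposition, so this count is finite modulo $\mathrm{Stab}_{\hat{W}'}(\hat{\rho})$. Using $h^{\vee}\ne0$ and the explicit formula $t_{\alpha}\hat{\rho}-\hat{\rho}=h^{\vee}\alpha-((\hat{\rho},\alpha)+\tfrac{1}{2}h^{\vee}(\alpha,\alpha))\delta$, $T'$ acts freely on $\hat{\rho}$, so $\mathrm{Stab}_{\hat{W}'}(\hat{\rho})$ sits inside the finite group $W'$ and is therefore finite. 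This yields convergence in $\mathcal{R}$, and by Lemma \ref{lem:anti invariant} the resulting sum lies in $\mathcal{R}_{\hat{W}'}\subset\mathcal{R}'$.
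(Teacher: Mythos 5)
Your overall strategy coincides with the paper's: bound the support of each summand by $w\hat{\rho}-\hat{Q}^{+}$ using Remark \ref{rem:maximal element} together with Lemma \ref{lem:WeylGroupAction}(i), and then reduce convergence to the finiteness of the sets $H_{r}=\{w\in\hat{W}'\mid\mbox{ht}(\hat{\rho}-w\hat{\rho})\le r\}$, which by Lemma \ref{lem:WeylGroupAction}(ii) comes down to the finiteness of $H_{0}=\mbox{Stab}_{\hat{W}'}\hat{\rho}$. The gap is in your final step. From the fact that $T'$ acts freely on $\hat{\rho}$ you cannot conclude that $\mbox{Stab}_{\hat{W}'}\hat{\rho}$ lies in $W'$: an element $t_{\mu}y$ with $\mu\ne0$ and $y\in W'$ can still fix $\hat{\rho}$, since the condition $t_{\mu}y\hat{\rho}=\hat{\rho}$ only forces $h^{\vee}\mu=\rho-y\rho$ together with the vanishing of a $\delta$-coefficient, and this has nonzero solutions (this is exactly how affine reflections $s_{n\delta-\alpha}=t_{\frac{2n}{(\alpha,\alpha)}\alpha}s_{\alpha}$ can stabilize a weight lying on their wall). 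The failure actually occurs in this paper: for $G(3)^{(2)}$ the proof of Proposition \ref{lem:rhoValueOne} computes $\mbox{Stab}_{\hat{W}'}\hat{\rho}=\{1,s_{\delta-3\varepsilon_{2}-\varepsilon_{1}}\}$, and $s_{\delta-3\varepsilon_{2}-\varepsilon_{1}}$ has a nontrivial translation component. So your claimed inclusion $\mbox{Stab}_{\hat{W}'}\hat{\rho}\subset W'$ is false, even though the finiteness you need is true.

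The paper closes this point differently, and this is where the hypotheses really enter. By Lemma \ref{lem:WeylGroupAction}(iii), $H_{0}$ is generated by the reflections $s_{\alpha}$ with $\alpha\in\Pi_{+}$ and $\langle\hat{\rho},\alpha^{\vee}\rangle=0$. Since $h^{\vee}\ne0$ and Proposition \ref{pro:properties of simple roots}(i) gives $(\alpha,\alpha)\ge0$ on $\hat{\pi}$, there is a simple root of $\hat{\Delta}'$ not orthogonal to $\hat{\rho}$; hence the Dynkin diagram spanned by these generators is a \emph{proper} subdiagram of the indecomposable affine diagram of $\hat{\Delta}'$, therefore of finite type, and $H_{0}$ is finite. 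You need to substitute an argument of this kind for your stabilizer claim; note also that, strictly speaking, the finiteness must be applied to the sets $G_{r}$ defined by the height of $\hat{\rho}-w\hat{\rho}+\sum_{\beta\in S:\,w\beta\in\hat{\Delta}^{-}}w\beta$, but since the correction term lies in $-\hat{Q}^{+}$ one has $G_{r}\subset H_{r}$ and your reduction is harmless.
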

\begin{proof}
We extend the proof in \cite[2.4.1]{G1} to twisted affine Lie superalgebras.
 By \cite[5]{HS} and \cite[4.10]{S} the set of principal roots
of $\hat{\Delta}$ is equal to the set of simple roots of $\hat{\Delta}_{\bar{0}}^{+}$.
Hence  $\hat{W}'$ is a subgroup of the group $W_{+}$ introduced
in Lemma \ref{lem:WeylGroupAction} and $\Pi_{+}\subseteq\hat{\pi}'$.
By Remark \ref{rem:maximal element}, for every $w\in\hat{W}'$, the
maximal element of the support $w\left(e^{\hat{\rho}}\cdot\prod_{\beta\in S}\left(1+e^{-\beta}\right)^{-1}\right)$
is $w\hat{\rho}+\sum_{\beta\in S:w\beta\in\hat{\Delta}^{-}}w\beta$.
By Lemma \ref{lem:WeylGroupAction}.(i), $w\hat{\rho}\le\hat{\rho}$.
Hence 
\[
\mbox{supp}\left(w\frac{e^{\hat{\rho}}}{\prod_{\beta\in S}\left(1+e^{-\beta}\right)}\right)\subset w\hat{\rho}-\hat{Q}^{+}\subset\hat{\rho}-\hat{Q}^{+}.
\]
So it suffices to show that the set $G_{r}:=\left\{ w\in\hat{W}'\mid\mbox{ht}\left(\hat{\rho}-w\hat{\rho}+\sum_{\beta\in S:w\beta\in\hat{\Delta}^{-}}w\beta\right)\le r\right\} $
is finite for every $r$. This set is contained in the set $H_{r}:=\left\{ w\in\hat{W}'\mid\mbox{ht}\left(\hat{\rho}-w\hat{\rho}\right)\le r\right\} $.
We will show that $H_{r}$ is finite. By Lemma \ref{lem:WeylGroupAction}.(ii)
every element in $H_{r}$ is of the form $w_{0}s_{i_{1}}w_{1}\cdot\ldots\cdot s_{i_{j}}w_{j}$
where $w_{i}\in H_{0}$, $j\le r$ and $s_{i_{j}}$ are simple reflections
for all $i$. 

Let us show that $H_{0}$ is a finite subgroup of $\hat{W}'$. By
Lemma \ref{lem:WeylGroupAction}.(iii) $H_{0}$ is the subgroup generated
by $\left\{ s_{\alpha}\mid\alpha\in\Pi_{+}\mbox{ and }\left\langle \hat{\rho},\alpha^{\vee}\right\rangle =0\right\} $.
Let $\Sigma$ and $\Sigma_{0}$ be the Dynkin diagrams of $\hat{\Delta}'$
and $\left\{ \alpha\in\Pi_{+}\,\mid\,\left\langle \hat{\rho},\alpha^{\vee}\right\rangle =0\right\} $,
respectively. The inclusion $\Sigma_{0}\subset\Sigma$ is proper.
Indeed, since $h^{\vee}\ne0$, there exists a simple root $\alpha$
such that $\frac{1}{2}\left(\alpha,\alpha\right)=\left(\hat{\rho},\alpha\right)\ne0$.
Since $\left(\alpha,\alpha\right)\ne0$, either $\alpha$ or $2\alpha$
is a principal root. By Proposition \ref{pro:properties of simple roots}.(i),
$\left(\alpha,\alpha\right)>0$ . Thus, either $\alpha$ or $2\alpha$
belongs to $\hat{\Delta}'$, so $\left(\hat{\rho},\hat{\Delta}'\right)\ne0$
and hence $\Sigma_{0}\ne\Sigma$. Since $\Sigma$ is affine and indecomposable
and $\Sigma_{0}$ is a proper subdiagram of $\Sigma$, we get that
$\Sigma_{0}$ is of finite type and thus $H_{0}$ is finite as asserted. 
\end{proof}

\subsection{Step II}

The next step of the proof is to show that the support of the difference
between the two sides of (\ref{eq:denom}) admits at most one maximal
element which is $\hat{\rho}$. We do this by showing that $\hat{\rho}$
is the only element in $U$ which is a maximal element of a regular
$\hat{W}'$-orbit. We multiply $e^{\hat{\rho}}\hat{R}$ and $\mathcal{F}_{T'}\left(e^{\hat{\rho}}R\right)$
by $e^{\hat{\rho}'-\hat{\rho}}\hat{R}_{1}$ so that they will belong
to the algebra $\mathcal{R}_{\hat{W}'}$ where the action of $\hat{W}'$
can be applied to the support of a series (that is lemmas \ref{lem:anti invariant}
and \ref{lem:support union regular orbits} are applicable).
\begin{prop}
\label{pro:The-only-element maximal support}If $Y:=e^{\hat{\rho}}\hat{R}-\mathcal{F}_{T'}\left(e^{\hat{\rho}}R\right)$
is non zero, the only maximal element in the support of $Y$ is $\hat{\rho}$.
\end{prop}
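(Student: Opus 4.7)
The plan is to leverage $\hat W'$-antisymmetry together with the Casimir constraint $(\mu,\mu)=(\hat\rho,\hat\rho)$ and the positivity of Proposition~\ref{pro:properties of simple roots} to force every maximal element of $\mbox{supp}(Y)$ to be $\hat\rho$.

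The first step is to pass to $\mathcal{R}_{\hat W'}$ by multiplying by the invertible element $e^{\hat\rho'-\hat\rho}\hat R_{\bar 1}\in\mathcal{Y}$. Set $\tilde Y:=Y\cdot e^{\hat\rho'-\hat\rho}\hat R_{\bar 1}$. By Remark~\ref{rem:anti invariant R1} the factor $e^{\hat\rho'-\hat\rho}\hat R_{\bar 1}$ is $\hat W'$-invariant, and both summands of $Y$ are $\hat W'$-skew invariant: the first by Remark~\ref{rem:anti invariant R1}, and the second via the identity $\mathcal F_{T'}(e^{\hat\rho}R)=\mathcal F_{\hat W'}\bigl(e^{\hat\rho}/\prod_{\beta\in S}(1+e^{-\beta})\bigr)$ derived in the preceding subsection. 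Hence $\tilde Y$ is $\hat W'$-skew invariant, and Lemma~\ref{lem:support union regular orbits} yields that $\mbox{supp}(\tilde Y)$ is a disjoint union of regular $\hat W'$-orbits.

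Next I would locate the maximal elements. Proposition~\ref{pro:well defiinendness} together with Lemma~\ref{lem:The-support-of} and the preceding lemma of Section~\ref{sec:Domain U} give $\mbox{supp}(Y)\subset(\hat\rho-\hat Q^+)\cap U$. Since the maximum of $e^{\hat\rho'-\hat\rho}\hat R_{\bar 1}$ is $\hat\rho'-\hat\rho$ (Remark~\ref{rem:maximal element}), a maximal element $\mu$ of $\mbox{supp}(Y)$ produces a maximal element $\mu+\hat\rho'-\hat\rho$ of $\mbox{supp}(\tilde Y)$, whose coefficient is the product of the corresponding leading coefficients and is therefore nonzero. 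Being maximal on a regular $\hat W'$-orbit fully contained in the support, $\mu+\hat\rho'-\hat\rho$ must be the unique strictly dominant representative of that orbit, whence
\[
\langle\mu+\hat\rho'-\hat\rho,\alpha^\vee\rangle>0\qquad\text{for every }\alpha\in\hat\pi'.
\]

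Finally, write $\mu=\hat\rho-\xi$ with $\xi\in\hat Q^+$. The condition $\mu\in U$ becomes $(\xi,\xi)=2(\hat\rho,\xi)$, which is non-negative by Remark~\ref{lem:positiveness}, while the displayed dominance translates to $\langle\xi,\alpha^\vee\rangle\leq 0$ for all $\alpha\in\hat\pi'$, hence $(\xi,\beta)\leq 0$ for every $\beta\in\hat\Delta'^{+}$. The main obstacle is to deduce $\xi=0$ from these constraints; following the strategy of \cite{G1}, the positivity $(\alpha_i,\alpha_i)\geq 0$ from Proposition~\ref{pro:properties of simple roots}(i) gives $(\hat\rho,\alpha_i)\geq 0$, and combined with $(\xi,\xi)=2(\hat\rho,\xi)\geq 0$ and anti-dominance this forces both quantities to vanish and localizes $\xi$ on simple roots of zero norm. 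The residual possibility $\xi=k\delta$ is ruled out by $h^\vee\ne 0$ since $(\delta,\delta)=0$ but $2(\hat\rho,k\delta)=2kh^\vee\ne 0$ for $k>0$. The delicate part, handled via the enhanced positivity in Proposition~\ref{pro:properties of simple roots}(ii), is reconciling anti-dominance with respect to $\hat\pi'$ with the expansion of $\xi$ in the (different) simple roots $\hat\pi$ of $\hat{\mathfrak{g}}$.
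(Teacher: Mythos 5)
Your first two steps coincide with the paper's argument: multiplying by the $\hat W'$-invariant element $e^{\hat\rho'-\hat\rho}\hat R_{\bar 1}$, using Lemmas \ref{lem:anti invariant} and \ref{lem:support union regular orbits} to see that the support of the product is a union of regular $\hat W'$-orbits, and extracting from the maximality of $\mu$ the strict dominance $\left\langle \mu+\hat\rho'-\hat\rho,\alpha^{\vee}\right\rangle >0$ for $\alpha\in\hat\pi'$, together with $(\mu,\mu)=(\hat\rho,\hat\rho)$ and $\mu\in\hat\rho-\hat Q^{+}$. The gap is in your last paragraph, where you write $\mu=\hat\rho-\xi$ and assert that the Casimir identity $(\xi,\xi)=2(\hat\rho,\xi)\ge 0$ ``combined with anti-dominance forces both quantities to vanish''. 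That deduction is exactly the nontrivial point, and it does not follow from what you have written: anti-dominance with respect to $\hat\pi'$ constrains only the component of $\xi$ lying in $\mathbb{Q}\hat\pi'$ and says nothing about the component in $\mathbb{Q}\pi''$, so by itself it cannot produce the reverse inequality $(\xi,\xi)\le 0$. Concretely, $\xi=n\beta$ for an isotropic simple root $\beta$ satisfies $(\xi,\xi)=2(\hat\rho,\xi)=0$ and is supported on zero-norm simple roots, so your ``localization'' step does not eliminate it, yet it is not of the form $k\delta$.

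What is missing is the orthogonal decomposition $\mu-\hat\rho=p'(\mu-\hat\rho)+p''(\mu-\hat\rho)$ with $p'(\mu-\hat\rho)\in\mathbb{Q}\hat\pi'$ and $p''(\mu-\hat\rho)\in\mathbb{Q}\pi''$, together with the two facts that make it work: (i) Lemma \ref{lem:regularOrbit} (equivalently, the level-zero argument: the positively weighted sum of the quantities $\left\langle \xi,\alpha^{\vee}\right\rangle \le 0$ over $\alpha\in\hat\pi'$ is proportional to $(\xi,\delta)=0$, so all of them vanish), which yields $p'(\mu-\hat\rho)=-s\delta$; and (ii) the negative definiteness of $\left(\cdot,\cdot\right)$ on $\mathbb{Q}\pi''$, which gives $(\xi,\xi)=\left(p''(\xi),p''(\xi)\right)\le 0$. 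Only after these two inputs do the Casimir identity and Remark \ref{lem:positiveness} force $p''(\mu-\hat\rho)=0$ and $sh^{\vee}=0$, hence $s=0$ and $\mu=\hat\rho$. Since you flag this as ``the delicate part'' without carrying it out, the proof as written is incomplete; the repair is short but requires precisely the two ingredients above, neither of which your sketch names.
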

To prove this proposition, we use the following lemma about affine
Lie algebras:
\begin{lem}
\emph{(\cite[3.1.1]{G1})}\label{lem:regularOrbit} Let $\mathfrak{a}$
be an affine Lie algebra with set of simple roots $\pi_{\mathfrak{a}}$,
and $W_{\mathfrak{a}}$ its Weyl group. Let $\rho_{\mathfrak{a}}$
be such that $\left(\rho_{\mathfrak{a}},\alpha\right)=\frac{1}{2}\left(\alpha,\alpha\right)$
for all $\alpha\in\pi_{\mathfrak{a}}$. Suppose $\lambda\in\sum_{\alpha\in\pi_{\mathfrak{a}}}\mathbb{Q}\alpha$
is such that $\lambda+\rho_{\mathfrak{a}}$ is a maximal element in
a regular $W_{\mathfrak{a}}$-orbit and $\left<\lambda,\alpha^{\vee}\right>\in\mathbb{Z}$
for all $\alpha\in\pi_{\mathfrak{a}}$. Then $\lambda\in\mathbb{Q}\delta$
where $\delta$ is the minimal imaginary root of $\mathfrak{a}$.
\end{lem}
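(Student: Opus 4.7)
The plan is to translate the maximality together with the regularity of the orbit into a strict positivity condition on the pairings $\langle \mu,\alpha^{\vee}\rangle$ for $\mu:=\lambda+\rho_{\mathfrak{a}}$, and then to exploit the fact that the invariant bilinear form on the $\mathbb{Q}$-span of the simple roots of an affine Lie algebra is positive semi-definite with one-dimensional radical $\mathbb{Q}\delta$.

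First I would fix any simple reflection $s_i\in W_{\mathfrak{a}}$ and observe that regularity of the orbit forces $s_i\mu\ne\mu$, since otherwise $s_i$ would stabilize $\mu$. Maximality of $\mu$ in the orbit then forces $s_i\mu<\mu$ in the partial order on the weights. Because $\mu-s_i\mu=\langle\mu,\alpha_i^{\vee}\rangle\alpha_i$ with $\alpha_i$ a simple root, this implies $\langle\mu,\alpha_i^{\vee}\rangle>0$. Using $\langle\rho_{\mathfrak{a}},\alpha_i^{\vee}\rangle=1$ together with the hypothesis $\langle\lambda,\alpha_i^{\vee}\rangle\in\mathbb{Z}$, one concludes that $\langle\lambda,\alpha_i^{\vee}\rangle\ge 0$ for every $\alpha_i\in\pi_{\mathfrak{a}}$.

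Next I would convert this into a statement about the bilinear form. All simple roots of an affine Lie algebra are real and satisfy $(\alpha_i,\alpha_i)>0$, so $\langle\lambda,\alpha_i^{\vee}\rangle\ge0$ is equivalent to $(\lambda,\alpha_i)\ge0$. Now write $\delta=\sum_i a_i\alpha_i$ with all $a_i>0$. Since $\delta$ lies in the radical of $(\cdot,\cdot)$ restricted to $\sum_i\mathbb{Q}\alpha_i$, one has
\[
0=(\lambda,\delta)=\sum_i a_i(\lambda,\alpha_i).
\]
Each summand is non-negative and each $a_i$ is strictly positive, so $(\lambda,\alpha_i)=0$ for every $i$. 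Thus $\lambda$ itself lies in the radical of the form restricted to $\sum_i\mathbb{Q}\alpha_i$, which is precisely $\mathbb{Q}\delta$, giving $\lambda\in\mathbb{Q}\delta$.

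The only conceptually delicate step is the first one: one must phrase the extraction of $\langle\mu,\alpha_i^{\vee}\rangle>0$ carefully, since in Kac--Moody generality orbits need not possess maximal elements, and regularity is essential for upgrading ``$\le$'' to strict ``$<$''. Once that is in place, the argument is a short positivity calculation. Note also that the conclusion $\lambda\in\mathbb{Q}\delta$ (rather than $\lambda\in\mathbb{C}\delta$) relies on the hypothesis that $\lambda$ lies in the $\mathbb{Q}$-span of the simple roots, which is exactly why this assumption appears in the statement.
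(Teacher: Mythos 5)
Your argument is correct: the integrality hypothesis makes $\mu=\lambda+\rho_{\mathfrak{a}}$ comparable with each $s_i\mu$, so maximality plus regularity yields $\left\langle \mu,\alpha_i^{\vee}\right\rangle \ge1$, hence $\left\langle \lambda,\alpha_i^{\vee}\right\rangle \ge0$, and pairing with $\delta=\sum_i a_i\alpha_i$ (which is orthogonal to all simple roots, with all $a_i>0$) forces $(\lambda,\alpha_i)=0$ for every $i$, placing $\lambda$ in the radical $\mathbb{Q}\delta$ of the form on $\sum_i\mathbb{Q}\alpha_i$. The paper itself gives no proof of this lemma, only the citation to \cite[3.1.1]{G1}; your argument is essentially the standard one given there, so there is nothing to compare beyond noting that the delicate point you flag (using integrality to get comparability, and regularity to upgrade to strict inequality) is exactly the content of that reference.
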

\begin{proof}[Proof of Proposition \ref{pro:The-only-element maximal support}]  

Let $\mu$ be a maximal element in $\mbox{supp}Y$. Let us show
that $\mu=\hat{\rho}.$ By Section \ref{sec:Domain U}, $\left(\mu,\mu\right)=\left(\hat{\rho},\hat{\rho}\right)$.
The element $\mu+\hat{\rho}'-\hat{\rho}$ is a maximal element of
the support of $e^{\hat{\rho}'-\hat{\rho}}\hat{R}_{1}\cdot Y$. By
the $\hat{W}'$-invariance of $e^{\hat{\rho}'-\hat{\rho}}\hat{R}_{1}$
(see Remark \ref{rem:anti invariant R1}) and (\ref{eq:other form}),(\ref{eq:other form case A(2k-1,2l-1)}),
we have the following equality 
\begin{equation}
e^{\hat{\rho}'-\hat{\rho}}\hat{R}_{1}\cdot Y=e^{\hat{\rho}'}\hat{R}_{0}-\mathcal{F}_{\hat{W}'}\left(\frac{e^{\hat{\rho}'}\hat{R}_{1}}{\prod_{\beta\in S}\left(1+e^{-\beta}\right)}\right)\label{eq:summands}
\end{equation}
Note that the summands in the right hand side of (\ref{eq:summands})
are $\hat{W}'$-anti-invariant elements of $\mathcal{R}_{\hat{W}'}$
(using Lemma \ref{lem:anti invariant}). By Lemma \ref{lem:support union regular orbits},
the support of $e^{\hat{\rho}'-\hat{\rho}}\hat{R}_{1}\cdot Y$ is
a union of regular orbits. For an element in $\lambda=\mathbb{Q}\hat{\pi}$,
we write $\lambda=p'\left(\lambda\right)+p''\left(\lambda\right)$
where $p'\left(\lambda\right)\in\mathbb{Q}\hat{\pi}'$ and $p''\left(\lambda\right)\in\mathbb{Q}\pi''$.
One has that $p'\left(\mu+\hat{\rho}'-\hat{\rho}\right)=p'\left(\mu-\hat{\rho}\right)+\hat{\rho}'$
is a maximal element in its $\hat{W}'$-orbit. By Lemma \ref{lem:regularOrbit},
$p'\left(\mu-\hat{\rho}\right)=-s\delta$, $s\in\mathbb{Q}$. Recall
that 

\begin{eqnarray*}
\left(\hat{\rho},\hat{\rho}\right) & = & \left(\mu,\mu\right)\\
 & = & \left(\hat{\rho}+p'\left(\mu-\hat{\rho}\right)+p''\left(\mu-\hat{\rho}\right),\hat{\rho}+p'\left(\mu-\hat{\rho}\right)+p''\left(\mu-\hat{\rho}\right)\right)\\
 & = & \left(\hat{\rho}-s\delta+p''\left(\mu-\hat{\rho}\right),\hat{\rho}-s\delta+p''\left(\mu-\hat{\rho}\right)\right)
\end{eqnarray*}
which implies that 
\[
\left(p''\left(\mu-\hat{\rho}\right),p''\left(\mu-\hat{\rho}\right)\right)+2\left(\hat{\rho},-s\delta+p''\left(\mu-\hat{\rho}\right)\right)=0.
\]
One has $\hat{\rho}-s\delta+p''\left(\mu-\hat{\rho}\right)\in\mbox{supp}Y$
and by Lemma \ref{pro:well defiinendness}, $\mbox{supp}Y\subset\hat{\rho}-\hat{Q}^{+}$.
Hence $\left(\hat{\rho},-s\delta+p''\left(\mu-\hat{\rho}\right)\right)\le0$
by Remark \ref{lem:positiveness}. Since $(\cdot,\cdot)$ is negative
definite on $\Delta''$, $\left(p''\left(\mu-\hat{\rho}\right),p''\left(\mu-\hat{\rho}\right)\right)\le0$
and we get that $p''\left(\mu-\hat{\rho}\right)=0$ and hence $s=0$.
Thus, $\mu=\hat{\rho}$ and the assertion follows.   \end{proof}
\begin{rem}
When $h^{\vee}=0$, there are algebras for which there is no choice
a set of simple roots such that $\left(\hat{\rho},\hat{Q}^{+}\right)\ge0$
and hence we can not use the argument described in this proof. 
\end{rem}

\subsection{Step III}

Let us complete the proof of Theorem \ref{thm:identity} for $h^{\vee}\ne0$
by showing that the coefficients of $e^{\hat{\rho}}$ are equal on
both sides of the equation. That is, we show that $\hat{\rho}$ does
not belong to the support of $e^{\hat{\rho}}\hat{R}-\mathcal{F}_{T'}\left(e^{\hat{\rho}}R\right)$. 

Clearly the coefficient of $e^{\hat{\rho}}$ in $e^{\hat{\rho}}\hat{R}$
is $1$. On the other hand, we have:
\begin{prop}
\label{lem:rhoValueOne}The coefficient of $e^{\hat{\rho}}$ in $\mathcal{F}_{\hat{W}'}\left(\frac{e^{\hat{\rho}}}{\prod_{\beta\in S}\left(1+e^{-\beta}\right)}\right)$
is $1$.\end{prop}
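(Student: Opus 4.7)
The plan is to isolate, for each $w\in\hat W'$, the contribution of $wY:=e^{w\hat\rho}/\prod_{\beta\in S}(1+e^{-w\beta})$ to the coefficient of $e^{\hat\rho}$ in $\mathcal{F}_{\hat W'}(Y)$, and then show that only $w=1$ can contribute, with contribution $1$.

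The first step is to locate where $\hat\rho$ can lie in $\mbox{supp}(wY)$. By Remark \ref{rem:maximal element}, the maximal element of $\mbox{supp}(wY)$ is $w\hat\rho+\sum_{\beta\in S:\,w\beta\in\hat\Delta^-}w\beta$, and the whole support lies in $w\hat\rho-\hat Q^+$. Since $\hat W'\subseteq W_+$ (as observed in the proof of Proposition \ref{pro:well defiinendness}), Lemma \ref{lem:WeylGroupAction}(i) gives $\hat\rho-w\hat\rho\in\hat Q^+$; on the other hand, $\hat\rho\in\mbox{supp}(wY)$ requires $w\hat\rho-\hat\rho\in\hat Q^+$. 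These two together force $w\hat\rho=\hat\rho$, and the condition that $\hat\rho$ not exceed the maximal element then forces $w\beta\in\hat\Delta^+$ for every $\beta\in S$.

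Next, for such $w$ (i.e.\ $w\in\mbox{Stab}_{\hat W'}(\hat\rho)$ with $wS\subset\hat\Delta^+$), I would expand $wY=e^{\hat\rho}\prod_{\beta\in S}\sum_{n_\beta\ge0}(-1)^{n_\beta}e^{-n_\beta w\beta}$. The coefficient of $e^{\hat\rho}$ is then the sum of $\prod_{\beta}(-1)^{n_\beta}$ over tuples $(n_\beta)\in\mathbb{Z}_{\ge0}^{S}$ with $\sum_\beta n_\beta w\beta=0$. Because $S$ is a maximal isotropic (hence linearly independent) set of simple roots and $w$ is an invertible linear map, $\{w\beta:\beta\in S\}$ is linearly independent, so the only such tuple is the zero tuple, and each such $w$ contributes exactly $1$ before signs. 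Consequently the coefficient of $e^{\hat\rho}$ in $\mathcal{F}_{\hat W'}(Y)$ equals $\sum_{w\in T_S}\mbox{sgn}(w)$, where $T_S:=\{w\in\mbox{Stab}_{\hat W'}(\hat\rho):wS\subset\hat\Delta^+\}$.

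The main obstacle is then to show that $T_S=\{1\}$. By Lemma \ref{lem:WeylGroupAction}(iii), $\mbox{Stab}_{\hat W'}(\hat\rho)$ is generated by reflections $s_\alpha$ in principal positive roots $\alpha$ with $(\hat\rho,\alpha)=0$. Proposition \ref{pro:properties of simple roots}(i) together with positive-definiteness of the form on $\Delta'$ implies $(\hat\rho,\alpha)=\tfrac{1}{2}(\alpha,\alpha)>0$ for every simple even root, and in the cases covered by Proposition \ref{pro:properties of simple roots}(ii) also for $\alpha_0$, so any principal root orthogonal to $\hat\rho$ arises only after nontrivial odd reflections from $\hat\pi$. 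I would finish by inspecting, for each family listed in Appendix \ref{sub:construction} using the specific $\hat\pi$ from Appendix \ref{sub:Appendix description of simple roots}, that every nontrivial element of $\mbox{Stab}_{\hat W'}(\hat\rho)$ sends some $\beta\in S$ across a positivity hyperplane into $\hat\Delta^-$. This is the twisted analogue of the argument carried out in \cite[2.4]{G1} and is where the detailed combinatorics of the twisted affine root systems enters.
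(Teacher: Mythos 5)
Your reduction is sound and is exactly the paper's: you correctly show that only $w$ in $A=\left\{ w\in\hat{W}'\mid w\hat{\rho}=\hat{\rho},\ wS\subset\hat{\Delta}^{+}\right\}$ can contribute to the coefficient of $e^{\hat{\rho}}$, and that each such $w$ contributes $\mbox{sgn}(w)\cdot1$ (your linear-independence argument for $wS$ is fine and is what Remark \ref{rem:maximal element} encodes). The problem is that everything after that is a plan rather than a proof. The assertion that ``every nontrivial element of $\mbox{Stab}_{\hat{W}'}(\hat{\rho})$ sends some $\beta\in S$ into $\hat{\Delta}^{-}$'' is precisely the content of the proposition, and ``I would finish by inspecting each family'' does not discharge it. In the paper this inspection is the bulk of the proof and is genuinely nontrivial: for $A\left(2k,2k+1\right)^{\left(2\right)}$ and $A\left(2k,2k-1\right)^{\left(2\right)}$ one writes $w=t_{\mu}y$, translates the two conditions $w\hat{\rho}=\hat{\rho}$ and $wS\subset\hat{\Delta}^{+}$ into sign constraints on $\mu$, and solves a Diophantine condition of the shape $\sum a_{i}\left(a_{i}+1\right)=0$ to force $\mu=0$; for $G\left(3\right)^{\left(2\right)}$ one computes the stabilizer explicitly as $\left\{ 1,s_{\delta-3\varepsilon_{2}-\varepsilon_{1}}\right\}$ and checks that the reflection sends the unique isotropic simple root to a negative root.

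You also miss the device that makes the generic case painless and which your family-by-family plan would otherwise have to rediscover each time. When $\left(\alpha_{0},\alpha_{0}\right)>0$ (all algebras except $A\left(2k,2k+1\right)^{\left(2\right)}$, $A\left(2k,2k-1\right)^{\left(2\right)}$ and $G\left(3\right)^{\left(2\right)}$), Lemma \ref{lem:WeylGroupAction}(iii) shows $s_{\alpha_{0}}\notin\mbox{Stab}_{\hat{W}'}\hat{\rho}$, so the stabilizer is generated by reflections in $\pi'$ and lies in the finite group $W'$; the coefficient of $e^{\hat{\rho}}$ in $\mathcal{F}_{\hat{W}'}\left(\cdot\right)$ therefore equals the coefficient of $e^{\rho}$ in $\mathcal{F}_{W'}\left(e^{\rho}\prod_{\beta\in S}\left(1+e^{-\beta}\right)^{-1}\right)=e^{\rho}R$, which is $1$ by the already-known finite-dimensional denominator identity --- no case-by-case root combinatorics needed, and no need to literally prove $A=\left\{ 1\right\}$ there. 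You should adopt this reduction for the generic case and supply the three explicit computations for the remaining algebras; without them the proof is incomplete.
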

\begin{proof}
Note that the coefficient of $e^{\hat{\rho}}$ in $\sum_{w\in\hat{W}'}\left(\mbox{sgn}w\right)w\left(\frac{e^{\hat{\rho}}}{\prod_{\beta\in S}\left(1+e^{-\beta}\right)}\right)$
is equal to $\sum_{w\in A}\left(\mbox{sgn}w\right)$ where $A:=\left\{ w\in\hat{W}'\,\mid\, w\hat{\rho}=\hat{\rho},w\beta\in\hat{\Delta}^{+}\mbox{ for all }\beta\in S\right\} $
(see Remark \ref{rem:maximal element}). We prove the proposition
by showing that $A=\left\{ 1\right\} $.\\

\textbf{Case 1: $\left(\alpha_{0},\alpha_{0}\right)>0$.} We show
that the stabilizer of $\hat{\rho}$ in $\hat{W}'$ is trivial. Similarly
to the argument of Proposition \ref{pro:well defiinendness}, Lemma
\ref{lem:WeylGroupAction}.(iii) yields that the stabilizer of $\hat{\rho}$
in $\hat{W}'$ is generated by reflections with respect to roots in
$\hat{\pi}'$. By Proposition \ref{pro:properties of simple roots}.(i),
$\alpha_{0}\in\hat{\pi}'$ but $s_{\alpha_{0}}$ is not in the stabilizer
since $\left(\alpha_{0},\alpha_{0}\right)=\frac{1}{2}\left(\alpha_{0},\hat{\rho}\right)\ne0$.
Hence the stabilizer is generated by reflections with respect to roots
in $\pi'$ so $\mbox{Stab}_{\hat{W}'}\hat{\rho}$ lies in $W'$ and
thus coincides with $\mbox{Stab}_{W'}\rho$. We get that the coefficient
of $e^{\hat{\rho}}$ in $\mathcal{F}_{\hat{W}'}\left(\frac{e^{\hat{\rho}}}{\prod_{\beta\in S}\left(1+e^{-\beta}\right)}\right)$
is the same as in $\mathcal{F}_{W'}\left(\frac{e^{\hat{\rho}}}{\prod_{\beta\in S}\left(1+e^{-\beta}\right)}\right)$.
Since $\hat{\rho}-\rho$ is $W'$-invariant, we get that it is equal
to the coefficient of $e^{\rho}$ in $\mathcal{F}_{W'}\left(\frac{e^{\rho}}{\prod_{\beta\in S}\left(1+e^{-\beta}\right)}\right)$.
 By the denominator identity for finite dimensional Lie superalgebras
it is equal to the coefficient of $e^{\rho}$ in $e^{\rho}R$ which
is clearly $1$.\\

\textbf{Case 2: $G\left(3\right)^{\left(2\right)}$.} In this case
$S=\left\{ \varepsilon_{3}-\varepsilon_{2}-\varepsilon_{1}\right\} $,
$\hat{\Delta}'=\hat{\Delta}_{\bar{0}}\backslash\left\{ 2s\delta\pm2\varepsilon_{3}\right\} _{s\in\mathbb{Z}}$
and $\hat{\rho}=3\Lambda_{0}-\varepsilon_{3}+\varepsilon_{1}+\varepsilon_{2}$.
By Lemma \ref{lem:WeylGroupAction}.(iii), the stabilizer of $\hat{\rho}$
is generated by reflections with respect to the principal roots. The
principal roots are $\left\{ 2\varepsilon_{1},2\varepsilon_{2},\delta-3\varepsilon_{2}-\varepsilon_{1}\right\} $
and so $\mbox{Stab}_{\hat{W}'}\hat{\rho}=\left\{ 1,s_{\delta-3\varepsilon_{2}-\varepsilon_{1}}\right\} $.
Since $s_{\delta-3\varepsilon_{2}-\varepsilon_{1}}\left(\varepsilon_{3}-\varepsilon_{2}-\varepsilon_{1}\right)\notin\hat{\Delta}^{+}$,
$A=\left\{ 1\right\} $.\\

\textbf{Case 3: $A\left(2k,2k+1\right)^{\left(2\right)}$.} In this
case $\hat{\rho}=\Lambda_{0}+\frac{1}{2}\left(\sum_{i=1}^{k}\left(\varepsilon_{i}-\delta_{i}\right)+\varepsilon_{k+1}\right)$
and we take $S=\left\{ \delta_{i}-\varepsilon_{i+1}\right\} _{i=1,\ldots,k}$.
Here $\hat{\Delta}'=\left\{ s\delta_{s\ne0},s\delta\pm\varepsilon_{g}\pm\varepsilon_{h},2s\delta\pm2\varepsilon_{h}\right\} $
and $T'=\left\{ t_{\mu}\,\mid\,\mu\in M\right\} $ where $M=\mbox{span}_{\mathbb{Z}}\left\{ \pm\varepsilon_{g}\pm\varepsilon_{h}\right\} $,
$s\in\mathbb{Z}$, $g\ne h$ and $1\le g,h\le k+1$.

Let $w\in A$. We show that $w=1$. Write $w=t_{\mu}y$ where $y\in W'$
and $\mu\in\mbox{span}_{\mathbb{Z}}\left\{ \pm\varepsilon_{g}\pm\varepsilon_{h}\right\} $.
Then $wS\subset\hat{\Delta}^{+}$ means that $w\left(\delta_{i}-\varepsilon_{i+1}\right)=\delta_{i}-y\varepsilon_{i+1}+\left(\mu,y\varepsilon_{i+1}\right)\delta\in\hat{\Delta}^{+}$.
Hence $\left(\mu,y\varepsilon_{i}\right)\ge0$ for all $i=2,\ldots,k+1$.
On the other hand, $\hat{\rho}=w\hat{\rho}$ means that
\begin{eqnarray*}
\hat{\rho} & = & y\hat{\rho}+h^{\vee}\mu-\left(\left(\hat{\rho},y^{-1}\mu\right)+\frac{\left(\mu,\mu\right)}{2}h^{\vee}\right)\delta\\
 & = & y\hat{\rho}+\mu-\frac{1}{2}\left(\left(\varepsilon_{1}+\ldots+\varepsilon_{k+1},y^{-1}\mu\right)+\left(y^{-1}\mu,y^{-1}\mu\right)\right)\delta.
\end{eqnarray*}
Write $y^{-1}\mu=\sum a_{i}\varepsilon_{i}$ and $\sum a_{i}=0\,(\mbox{mod}\,2)$.
Then 
\begin{eqnarray*}
0 & = & \left(\varepsilon_{1}+\ldots+\varepsilon_{k+1},y^{-1}\mu\right)+\left(y^{-1}\mu,y^{-1}\mu\right)\\
 & = & \sum a_{i}\left(a_{i}+1\right).
\end{eqnarray*}
Since $a_{i}\in\mathbb{Z}$, we get that $a_{i}\in\left\{ 0,-1\right\} $.
Since $a_{i}=\left(\mu,y\varepsilon_{i}\right)\ge0$ for $i=2,\ldots,k+1$,
we have $a_{2},\ldots,a_{k+1}=0$ and hence $a_{1}=0$. Hence $\mu=0$
and $w=y\in W'$. Since $w\hat{\rho}=\hat{\rho}$, we get that $w$
permutes $\varepsilon_{1}\ldots,\varepsilon_{k+1}$ (no sign change).
Note that $\delta_{i}-\varepsilon_{j}\in\Delta^{-}$ if $j\le i$
and so the only permutation $w$ such that $wS\subset\Delta^{+}$
is $1$. Thus, $A=\left\{ 1\right\} $. \\

\textbf{Case 4: }\textbf{\textcolor{black}{$A\left(2k,2k-1\right)^{\left(2\right)}$.
}}\textcolor{black}{In this case} $S=\left\{ \delta_{i}-\varepsilon_{i}\right\} _{i=1,\ldots,k}$
and \textcolor{black}{$\hat{\rho}=\Lambda_{0}+\frac{1}{2}\sum_{i=1}^{k}\left(\varepsilon_{i}-\delta_{i}\right)$.}
Here $\hat{\Delta}'=\left\{ s\delta_{s\ne0},s\delta\pm\varepsilon_{g}\pm\varepsilon_{h},s\delta\pm\varepsilon_{g},\left(2s+1\right)\delta\pm2\varepsilon_{g}\right\} $
where $s\in\mathbb{Z}$, $g\ne h$ and $1\le g,h\le k$. Let $w\in A$.
We show that $w=1$. Write $w=t_{\mu}y$ where $y\in W'$ and $\mu\in\mbox{span}_{\mathbb{Z}}\left\{ \varepsilon_{1},\dots,\varepsilon_{k}\right\} $.
Then $wS\subset\hat{\Delta}^{+}$ means that $w\left(\delta_{i}-\varepsilon_{i}\right)=\delta_{i}-y\varepsilon_{i}+\left(\mu,y\varepsilon_{i}\right)\delta\in\hat{\Delta}^{+}$.
Hence $\left(\mu,y\varepsilon_{i}\right)\ge0$ for all $i=1,\ldots,k$.
On the other hand, $\hat{\rho}=w\hat{\rho}$ means that
\begin{eqnarray*}
\hat{\rho} & = & w\hat{\rho}=y\hat{\rho}+h^{\vee}\mu-\left(\left(\hat{\rho},y^{-1}\mu\right)+\frac{\left(\mu,\mu\right)}{2}h^{\vee}\right)\delta\\
 & = & y\hat{\rho}+\mu-\frac{1}{2}\left(\left(\varepsilon_{1}+\ldots+\varepsilon_{k},y^{-1}\mu\right)+\left(\mu,\mu\right)\right)\delta.
\end{eqnarray*}
Since $\left(\varepsilon_{1}+\ldots+\varepsilon_{k},y^{-1}\mu\right)\ge0$
and $\left(\mu,\mu\right)\ge0$, we get that $\mu=0$ and so $w=y\in W'$.
Thus, $w\hat{\rho}=\hat{\rho}$ implies that $w$ permutes $\varepsilon_{1}\ldots,\varepsilon_{k}$
(no sign change). Note that $\delta_{i}-\varepsilon_{j}\in\Delta^{-}$
if $j<i$ and hence the only permutation $w$ such that $wS\subset\hat{\Delta}^{+}$
is $1$. Thus, $A=\left\{ 1\right\} $. 
\end{proof}

\section{Proof of the Denominator Identity, $h^{\vee}=0$\label{sec:h=00003D0}}

In this section we prove Theorem \ref{thm:identity} for the case
$h^{\vee}=0$\textcolor{black}{, in three steps. }The first step is
to show that the sum $\mathcal{F}_{T'}\left(Re^{\hat{\rho}}\right)$
is well defined and belongs to $\mathcal{R}$.  In the second step,
we show that $\hat{R}^{-1}e^{-\hat{\rho}}\cdot\mathcal{F}_{T'}\left(Re^{\hat{\rho}}\right)$
takes the form $f\left(q\right)$. In the third step we compute $f\left(q\right)$
using a proper evaluation. The case $h^{\vee}=0$ consists of the
algebras\textcolor{black}{{} $A\left(2k-1,2k-1\right)^{\left(2\right)}$,
$A\left(2k,2k\right)^{\left(4\right)}$ and $D\left(k+1,k\right)^{\left(2\right)}$}.
We first describe the even roots and the translation groups.

\subsection{Description of the root system and the Weyl group.}

We describe the set of even roots $\Delta_{\bar{0}}=\Delta'\sqcup\Delta''$
of $\mathfrak{g}$, and the translation groups $T'$ and $T''$ of
the Weyl group. We denote by $\mathbb{C}\left[t^{m},t^{-m}\right](\hat{\mathfrak{k}})$,
the affine Lie algebra which is isomorphic to $\hat{\mathfrak{k}}$
where $t\otimes g\in\hat{\mathfrak{k}}$, $g\in\mathfrak{k}$ is mapped
to $t^{m}\otimes g\in\mathbb{C}\left[t^{m},t^{-m}\right](\hat{\mathfrak{k}})$.

\subsubsection{$A\left(2k-1,2k-1\right)^{\left(2\right)}$. }

In this case $\mathfrak{g}=D\left(k,k\right)$. The set of even roots
of $\mathfrak{g}$ is $\Delta'\sqcup\Delta''$,
\[
\Delta'=\left\{ \delta_{i}\pm\delta_{j}\mid i\ne j\right\} ,\quad\Delta''=\left\{ \varepsilon_{i}\pm\varepsilon_{j}\mid i\ne j\right\} \cup\left\{ 2\varepsilon_{i}\right\} 
\]
where $i,j=1,\ldots,k$. The set of even roots of $\hat{\mathfrak{g}}$
is $\hat{\Delta}_{\bar{0}}=\hat{\Delta}'\cup\hat{\Delta}''$ where
$\hat{\Delta}'$ and $\hat{\Delta}''$ are the root system of $A_{2k-1}^{\left(2\right)}$.
The translation subgroups are $T'=\left\{ t_{\mu}\,\mid\,\mu\in M'\right\} $
where $M'=\mbox{span}_{\mathbb{Z}}\left\{ \delta_{1},\ldots,\delta_{k}\right\} $
and $T''=\left\{ t_{\mu}\,\mid\,\mu\in M''\right\} $ where $M''=\mbox{span}_{\mathbb{Z}}\left\{ \varepsilon_{i}\pm\varepsilon_{j}\right\} $.

Recall that $\hat{W}'\nsupseteq T'$ and we embed $W'$ and $\hat{W}'$
in $W_{C_{k}}=\left\langle W',s_{\delta_{k}}\right\rangle $ and $\hat{W}_{C_{k}}=\left\langle \hat{W}',s_{\delta_{k}}\right\rangle $
, respectively. One has $\hat{W}_{C_{k}}=T'\rtimes W_{C_{k}}$.

\subsubsection{$A\left(2k,2k\right)^{\left(4\right)}$.\label{sub:A(2k,2k)}}

In this case $\mathfrak{g}=B\left(k,k\right)$. The set of even roots
of $\mathfrak{g}$ is $\Delta'\sqcup\Delta''$, 
\[
\Delta'=\left\{ \delta_{i}\pm\delta_{j}\mid i\ne j\right\} \cup\left\{ \delta_{i}\right\} ,\quad\Delta''=\left\{ \varepsilon_{i}\pm\varepsilon_{j}\mid i\ne j\right\} \cup\left\{ 2\varepsilon_{i}\right\} 
\]
where $i,j=1,\ldots,k.$ The set of even roots of $\hat{\mathfrak{g}}$
is $\hat{\Delta}_{\bar{0}}=\hat{\Delta}'\cup\hat{\Delta}''$ where
$\hat{\Delta}'$ and $\hat{\Delta}''$ are the root systems of $\mathbb{C}\left[t^{4},t^{-4}\right]\left(A_{2k}^{\left(2\right)}\right)$
and $\mathbb{C}\left[t^{2},t^{-2}\right]\left(A_{2k}^{\left(2\right)}\right)$,
respectively. The translation subgroups are $T'=\left\{ t_{\mu}\,\mid\,\mu\in M'\right\} $
where $M'=\mbox{span}_{\mathbb{Z}}\left\{ 2\delta_{1},\ldots,2\delta_{k}\right\} $
and $T''=\left\{ t_{\mu}\,\mid\,\mu\in M''\right\} $ where $M''=\mbox{span}_{\mathbb{Z}}\left\{ 2\varepsilon_{1},\ldots,2\varepsilon_{k}\right\} $.
As we shall see, in this case it is possible to swap between $\hat{\Delta}'$
and $\hat{\Delta}''$ and the proof works.

\subsubsection{$D\left(k+1,k\right)^{\left(2\right)}$.}

In this case $\mathfrak{g}=B\left(k,k\right)$ as well, and the set
of even roots of $\mathfrak{g}$ is the same as in \ref{sub:A(2k,2k)}.
The set of even roots of $\hat{\mathfrak{g}}$ is $\hat{\Delta}_{\bar{0}}=\hat{\Delta}'\cup\hat{\Delta}''$
where $\hat{\Delta}'$ and $\hat{\Delta}''$ are the root systems
of $D_{k+1}^{\left(2\right)}$ and $\mathbb{C}\left[t^{2},t^{-2}\right]\left(C_{k}^{\left(1\right)}\right)$,
respectively. The translation subgroups are $T'=\left\{ t_{\mu}\,\mid\,\mu\in M'\right\} $
where $M'=\mbox{span}_{\mathbb{Z}}\left\{ 2\delta_{1},\ldots,2\delta_{k}\right\} $
and $T''=\left\{ t_{\mu}\,\mid\,\mu\in M''\right\} $ where $M''=\mbox{span}_{\mathbb{Z}}\left\{ 2\varepsilon_{1},\ldots,2\varepsilon_{k}\right\} $.
In this case, one can swap between $\hat{\Delta}'$ and $\hat{\Delta}''$.
As we shall see, in this case it is possible to swap between $\hat{\Delta}'$
and $\hat{\Delta}''$ and the proof works.

\subsection{Step I}

Let us show that $\mathcal{F}_{T'}\left(e^{\rho}R\right)$ is a well
defined element of $\mathcal{R}$. In the case $\hat{\mathfrak{g}}=A\left(2k-1,2k-1\right)^{\left(2\right)}$,
we use a method from \cite[2.1]{GR} and in the cases $A\left(2k,2k\right)^{\left(4\right)}$
and $D\left(k+1,k\right)^{\left(2\right)}$, we use the denominator
identity for $B\left(k,k\right)$.

Note that when $h^{\vee}=0$, $\hat{\rho}=\rho$.
\begin{lem}
For $\hat{\mathfrak{g}}=A\left(2k-1,2k-1\right)^{\left(2\right)}$,
$\mathcal{F}_{T'}\left(e^{\rho}R\right)$ is well defined and belongs
to the algebra $\mathcal{R}$.\end{lem}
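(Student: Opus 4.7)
\emph{Plan.} My approach would follow the method of \cite[2.1]{GR}. The first step is to apply the denominator identity (\ref{eq:fin dim denom}) for the finite-dimensional basic superalgebra $\mathfrak{g}=D(k,k)$ with a convenient maximal isotropic set $S\subset\pi$ (for instance $S=\{\delta_{i}-\varepsilon_{i}\}_{i=1}^{k}$), and to rewrite
\[
\mathcal{F}_{T'}(e^{\rho}R)=\mathcal{F}_{T'}\mathcal{F}_{W^{\#}}\!\left(\frac{e^{\rho}}{\prod_{\beta\in S}(1+e^{-\beta})}\right).
\]
Repeating the symmetrization trick of (\ref{eq:other form case A(2k-1,2l-1)}) in the degenerate case $l=k$, and using $\hat{W}_{C_{k}}=T'\rtimes W_{C_{k}}$ together with the sign extension $\mbox{sgn}(s_{\delta_{k}})=1$ from Section \ref{sub:W}, this becomes
\[
\mathcal{F}_{T'}(e^{\rho}R)=\mathcal{F}_{\hat{W}_{C_{k}}}\!\left(\frac{e^{\rho}}{\prod_{\beta\in S}(1+e^{-\beta})}\right).
\]

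The second step is to verify that this formal sum is well defined and lies in $\mathcal{R}$. By Remark \ref{rem:maximal element}, the support of the summand indexed by $w\in\hat{W}_{C_{k}}$ is contained in $w\rho+\mathbb{Z}\{w\beta:\beta\in S\}$. Writing $w=t_{\mu}w_{0}$ with $w_{0}\in W_{C_{k}}$ and $\mu\in M'$, and decomposing $\hat{\mathfrak{h}}^{*}=\mathfrak{h}^{*}\oplus\mathbb{C}\delta\oplus\mathbb{C}\Lambda_{0}$, one sees that $\mu$ only shifts weights along $\delta$, so the $\mathfrak{h}^{*}$-projection of the support depends only on $w_{0}$ and on the integer coefficients $(n_{\beta})_{\beta\in S}$ parametrizing the geometric series expansion of the fraction. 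Since $W_{C_{k}}$ is finite and $S$ is $\mathbb{Z}$-linearly independent, the prescribed $\mathfrak{h}^{*}$-projection of a weight $\xi$ pins $(w_{0},(n_{\beta}))$ down to finitely many possibilities. The $\delta$-coefficient of $\xi$ then imposes the single linear equation $(\xi|_{\mathfrak{h}^{*}},\mu)=-\xi|_{\delta}$ on $\mu\in M'$, whose solution set is in general an infinite affine sublattice; the required finiteness of the alternating sum over this sublattice must be extracted from the $\mbox{sgn}$-cancellations, as in \cite[2.1]{GR}. Containment of the total support in $\rho-\hat{Q}^{+}$ then follows from the $\hat{W}_{C_{k}}$-anti-invariance of the resulting expression.

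\emph{Main obstacle.} Because $h^{\vee}=0$, Lemma \ref{lem:WeylGroupAction} is unavailable: $\hat{\rho}=\rho$ is not dominant with respect to $\hat{W}_{C_{k}}$, and individual summands $t(e^{\rho}R)$ for $t\in T'$ may have supports extending unboundedly in the $+\delta$ direction, so they need not lie in $\mathcal{R}$. Thus the term-by-term argument of Proposition \ref{pro:well defiinendness} no longer applies, and well-definedness has to be harvested globally from anti-invariance and the lattice cancellations in $T'$, rather than orbit-by-orbit. Carrying out this cancellation coefficient-by-coefficient in the twisted setting, where $\hat{W}'\not\supset T'$ and one is forced to work inside the larger group $\hat{W}_{C_{k}}$, is the technical crux, and it is precisely the analogue of what \cite[2.1]{GR} accomplishes for the non-twisted case.
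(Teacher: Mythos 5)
There is a genuine gap, and the route you choose runs into an obstruction that the paper itself flags. First, the symmetrization step is transplanted from the $h^{\vee}\ne 0$ setting: for $A\left(2k-1,2k-1\right)^{\left(2\right)}$ one has $h^{\vee}=0$, so $W^{\#}$ is the Weyl group of the \emph{larger} even part $\Delta''=\left\{ \varepsilon_{i}\pm\varepsilon_{j}\right\} \cup\left\{ 2\varepsilon_{i}\right\}$, acting on the $\varepsilon$'s, whereas $W_{C_{k}}=\left\langle W',s_{\delta_{k}}\right\rangle$ and $T'=\left\{ t_{\mu}\mid\mu\in\mbox{span}_{\mathbb{Z}}\left\{ \delta_{i}\right\} \right\}$ live on the $\delta$'s; hence $\mathcal{F}_{T'}\mathcal{F}_{W^{\#}}$ does not assemble into $\mathcal{F}_{\hat{W}_{C_{k}}}$. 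Second, even after correcting this, the iterated sum cannot be opened into a single sum over $\hat{W}_{C_{k}}$: the paper notes that $\mathcal{F}_{\hat{W}'}\bigl(e^{\hat{\rho}}\prod_{\beta\in S}\left(1+e^{-\beta}\right)^{-1}\bigr)$ is \emph{not} well defined for $A\left(2k-1,2k-1\right)^{\left(2\right)}$, and since $\hat{W}'$ has index $2$ in $\hat{W}_{C_{k}}$, the same weight receives infinitely many nonzero contributions in your sum (your own observation that the equation $\left(\xi|_{\mathfrak{h}^{*}},\mu\right)=-\xi|_{\delta}$ has an infinite solution lattice confirms this). Your closing appeal to ``sgn-cancellations'' cannot repair it: in the paper's formalism $\mathcal{F}_{\tilde{W}}\left(Y\right)$ is defined only when each weight receives finitely many contributions, so well-definedness is a counting statement, not a cancellation statement. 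That missing finiteness is precisely the content of the lemma, and it is not supplied.

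The paper's proof never leaves the sum over $T'$, and — contrary to your ``main obstacle'' — each individual summand $t_{\mu}\left(e^{\rho}R\right)$ \emph{does} have support bounded above: since $\rho$ has level zero, $t_{\mu}$ reverses only the finitely many finite roots $\alpha$ with $\left(\alpha,\mu\right)\ne0$, and one computes $\max\mbox{supp}\, t_{\mu}\left(e^{\rho}R\right)=-v\left(\mu\right)+\left(v\left(\mu\right),\mu\right)\delta$ explicitly. Writing $\mu=\sum n_{i}\delta_{i}$ and $v\left(\mu\right)=\sum a_{i}\delta_{i}+v''$, a direct count of the reversed positive roots gives $a_{i}\le-1$ when $n_{i}>0$ and $a_{i}\ge1$ when $n_{i}<0$, whence $\left(v\left(\mu\right),\mu\right)\le-\frac{1}{2}\sum_{i}\left|n_{i}\right|$. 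This simultaneously bounds all the maxima by $\rho+\sum_{\beta\in\Delta_{\bar{1}}^{+}}\beta$ and shows that only finitely many $\mu$ have their maximum above any prescribed weight; no cancellation is needed. The strategy you propose, of invoking the finite-dimensional denominator identity and regrouping into an affine Weyl group, is exactly what the paper does for $A\left(2k,2k\right)^{\left(4\right)}$ and $D\left(k+1,k\right)^{\left(2\right)}$ in Lemma \ref{lem:well Definedness cases A D}, where $\hat{W}'=T'\rtimes W'$ and the identity holds for $W'$ itself — but it is exactly the case $A\left(2k-1,2k-1\right)^{\left(2\right)}$ for which that route fails and the direct estimate above is required.
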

\begin{proof}
Let us show that for every $w\in T'$, the maximal element of $\mbox{supp}\, w\left(e^{\rho}R\right)$
is less than $\rho+\sum_{\beta\in\Delta_{\bar{1}}^{+}}\beta$ and
that for every $\nu\le\rho+\sum_{\beta\in\Delta_{\bar{1}}^{+}}\beta$,
there are only finitely many $w\in T'$ such that the maximal element
of $\mbox{supp}\, w\left(e^{\rho}R\right)$ is larger than $\nu$.
The latter implies that $\mathcal{F}_{T'}(e^{\rho}R)$ is well defined,
whereas the former implies that 
\[
\text{supp}\bigl(\mathcal{F}_{T'}(e^{\rho}R)\bigr)\subset\rho+\sum_{\beta\in\Delta_{1}^{+}}\beta-\hat{Q}^{+},
\]
 that is $\mathcal{F}_{T'}(e^{\rho}R)$ belongs to $\mathcal{R}$.

One has 
\[
\max\mbox{supp}\, w\left(e^{\rho}R\right)=w\rho-\sum_{\alpha\in\Delta_{0}^{+}:w\alpha<0}w\alpha+\sum_{\alpha\in\Delta_{1}^{+}:w\alpha<0}w\alpha.
\]
Each $w\in T'$ is of the form $w=t_{\mu}$ where $\mu\in\sum_{i=1}^{n}\mathbb{Z}\delta_{i}$.
Note that for every $\beta\in\mathbb{Q}\pi$, $w\beta<0$ if and only
if $\left(\beta,\mu\right)>0$. We obtain that 
\begin{equation}
\max\mbox{supp}\, t_{\mu}\left(e^{\rho}R\right)=-v\left(\mu\right)+\left(v\left(\mu\right),\mu\right)\delta,\label{eq:maxsupp t_mu}
\end{equation}
where 
\[
v\left(\mu\right)=-\rho+\sum_{\alpha\in\Delta_{0}^{+}:t_{\mu}\left(\alpha\right)<0}\alpha-\sum_{\alpha\in\Delta_{1}^{+}:t_{\mu}\left(\alpha\right)<0}\alpha.
\]
We show that

(i) for every $\mu$ such that $t_{\mu}\in T',\quad\left(v\left(\mu\right),\mu\right)\le0$;

(ii) for every $N>0$, $\left\{ \mu\mid\left(v\left(\mu\right),\mu\right)\ge-N\right\} $
is a finite set.\\
By (\ref{eq:maxsupp t_mu}), we see that condition (ii) insures that
only finitely many maximal elements can apear above a certain weight and condition (i) means that for all $\mu$ one has $$\max\text{supp}(t_{\mu}(e^\rho R))\le -v(\mu)\leq \rho + \sum_{\beta\in\Delta_{1}^+} \beta.$$ 

Let us verify (i) and (ii). Recall that $\mu$ has the form $\mu=\sum_{i=1}^{k}n_{i}\delta_{i}$,
where $n_{i}\in\mathbb{Z}$. Write $v\left(\mu\right)=v'+v''$, where
$v'=\sum_{i=1}^{k}a_{i}\delta_{i}$ and $v''$ lies in the span of
the $\varepsilon_{i}$-s. Let us show that if $n_{i}>0$ then $a_{i}\le-\frac{1}{2}$
and if $n_{i}<0$ then $a_{i}\ge\frac{1}{2}$. We shall then have
that 
\[
\left(v\left(\mu\right),\mu\right)=\sum a_{i}n_{i}\le-\frac{1}{2}\sum_{n_{i}>0}n_{i}+\frac{1}{2}\sum_{n_{i}<0}n_{i}\le0,
\]
and hence the set $\left\{ \mu\mid\left(v\left(\mu\right),\mu\right)\ge-N\right\} $
is a subset of $\left\{ \sum_{i=1}^{k}n_{i}\delta_{i}\mid\frac{1}{2}\sum\left|n_{i}\right|<N\right\} $
which is finite.\textbf{\textcolor{black}{}}\\
One has $\rho=0$ and 
\begin{eqnarray*}
\Delta_{\bar{0}}^{+} & = & \left\{ \delta_{i}\pm\delta_{j}\mid1\le i<j\le k\right\} \cup\left\{ \varepsilon_{i}\pm\varepsilon_{j}\mid1\le i<j\le k\right\} \cup\left\{ 2\varepsilon_{i}\mid1\le i\le k\right\} \\
\Delta_{\bar{1}}^{+} & = & \left\{ \delta_{i}\pm\varepsilon_{j}\mid1\le i<j\le k\right\} \cup\left\{ \varepsilon_{i}\pm\delta_{j}\mid1\le i\le j\le k\right\} .
\end{eqnarray*}
Hence
\begin{eqnarray*}
\left\{ \alpha\in\Delta_{\bar{0}}^{+}\mid\left(\alpha,\mu\right)>0\right\}  & = & \left\{ \delta_{i}-\delta_{j}\mid i<j,n_{i}>n_{j}\right\} \cup\left\{ \delta_{i}+\delta_{j}\mid i<j,n_{i}+n_{j}>0\right\} \\
\left\{ \alpha\in\Delta_{\bar{1}}^{+}\mid\left(\alpha,\mu\right)>0\right\}  & = & \left\{ \varepsilon_{i}-\delta_{j}\mid i\le j,n_{j}<0\right\} \cup\left\{ \delta_{i}-\varepsilon_{j}\mid i<j,n_{i}>0\right\} \cup\left\{ \delta_{i}+\varepsilon_{j}\mid n_{i}>0\right\} ,
\end{eqnarray*}
where $1\le i,j\le k$. So for $n_{i}>0$, one has $a_{i}\le\left(2k-i-1\right)-\left(2k-i\right)=-1$
and for $n_{i}<0$, one has $a_{i}\ge-\left(i-1\right)+i=1$ as required.\end{proof}
\begin{rem}
Note that the above argument does not apply if one would take $T''$
instead of $T'$. For example, $1\in\mbox{supp}\, t_{n\varepsilon_{1}}\left(e^{\rho}R\right)$
for every $n\le0$ and so $\sum_{t\in T''}t\left(e^{\rho}R\right)$
is not well defined.\textcolor{black}{}
\end{rem}
A similar argument applies for the cases $A\left(2k,2k\right)^{\left(4\right)}$and
$D\left(k+1,k\right)^{\left(2\right)}$. However, we shall prove a
stronger statement:
\begin{lem}
\label{lem:well Definedness cases A D}For the cases $\mbox{\ensuremath{\hat{\mathfrak{g}}}}=A\left(2k,2k\right)^{\left(4\right)}$and
$D\left(k+1,k\right)^{\left(2\right)}$, we have 
\begin{equation}
\mathcal{F}_{T'}\left(e^{\rho}R\right)=\mathcal{F}_{\hat{W}'}\left(\frac{e^{\rho}}{\prod_{\beta\in S}\left(1+e^{-\beta}\right)}\right)\label{eq:A D beta sum}
\end{equation}
where $S$ is a maximal isotropic subset of $\pi$ and both sums are
well defined elements of $\mathcal{R}$. \end{lem}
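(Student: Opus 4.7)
My approach is to deduce (\ref{eq:A D beta sum}) from the denominator identity (\ref{eq:fin dim denom}) for $\mathfrak{g}=B(k,k)$ by applying $\mathcal{F}_{T'}$, mirroring equation (\ref{eq:other form}) of the $h^{\vee}\neq 0$ case. For both $\hat{\mathfrak{g}}=A(2k,2k)^{(4)}$ and $\hat{\mathfrak{g}}=D(k+1,k)^{(2)}$ the finite subalgebra is $\mathfrak{g}=B(k,k)$, whose even root system decomposes as $\Delta'\sqcup\Delta''$ with $\Delta'$ of type $B_{k}$ and $\Delta''$ of type $C_{k}$. The two components share the same abstract Weyl group, so we may take $W^{\#}=W'$ in (\ref{eq:fin dim denom}).

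With this choice, and using the semidirect decomposition $\hat{W}'=T'\rtimes W'$ together with $\mathrm{sgn}(ty)=\mathrm{sgn}(t)\mathrm{sgn}(y)$ for $t\in T'$, $y\in W'$, a formal interchange of summation gives
\begin{equation*}
\mathcal{F}_{T'}(e^{\rho}R)=\mathcal{F}_{T'}\mathcal{F}_{W'}\left(\frac{e^{\rho}}{\prod_{\beta\in S}(1+e^{-\beta})}\right)=\mathcal{F}_{\hat{W}'}\left(\frac{e^{\rho}}{\prod_{\beta\in S}(1+e^{-\beta})}\right),
\end{equation*}
which is the desired identity (\ref{eq:A D beta sum}). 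The remaining content of the lemma is that all three expressions are well-defined elements of $\mathcal{R}$, which also legitimizes the interchange above.

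For the left hand side I would mirror the support-bound argument used in the preceding lemma (the $A(2k-1,2k-1)^{(2)}$ case). Writing $\mu\in M'$ in the basis described in Section \ref{sub:W} and setting
\begin{equation*}
v(\mu)=-\rho+\sum_{\alpha\in\Delta_{\bar{0}}^{+}:\,t_{\mu}\alpha<0}\alpha-\sum_{\alpha\in\Delta_{\bar{1}}^{+}:\,t_{\mu}\alpha<0}\alpha,
\end{equation*}
one has $\max\mathrm{supp}\,t_{\mu}(e^{\rho}R)=-v(\mu)+(v(\mu),\mu)\delta$. Using the explicit list of positive roots of $B(k,k)$, I would show that the $\delta_{i}$-coefficient of $v(\mu)$ has sign opposite to the $\delta_{i}$-coefficient $n_{i}$ of $\mu$ with absolute value bounded below by a fixed positive constant, so that $(v(\mu),\mu)\le -c\sum|n_{i}|$ for some $c>0$. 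This both bounds $\max\mathrm{supp}\,t_{\mu}(e^{\rho}R)$ uniformly from above and ensures only finitely many $\mu$ produce a maximum above any prescribed weight; hence $\mathcal{F}_{T'}(e^{\rho}R)\in\mathcal{R}$. Well-definedness of the right hand side then follows, since $W'$ is finite and the double sum is the same term-by-term.

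The main obstacle is the explicit root combinatorics for $B(k,k)$: compared with the $D(k,k)$ computation in the preceding lemma, $B(k,k)$ has the additional short even roots $\delta_{i}$ as well as extra short odd roots, so one must recount the positive roots flipped by $t_{\mu}$ and verify the sign pattern by hand for each of the two algebras $A(2k,2k)^{(4)}$ and $D(k+1,k)^{(2)}$ (with their respective lattices $M'=\mathrm{span}_{\mathbb{Z}}\{2\delta_{i}\}$). Once this estimate is established, the rest of the proof is a direct transcription of the previous lemma.
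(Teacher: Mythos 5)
Your overall route --- the finite denominator identity for $B\left(k,k\right)$ with $W'$ in place of $W^{\#}$, combined with $\hat{W}'=T'\rtimes W'$ --- is the same as the paper's, but you establish well-definedness in the wrong direction, and that direction contains a genuine gap. You prove that $\mathcal{F}_{T'}\left(e^{\rho}R\right)$ is well defined by a support estimate on the blocks $t_{\mu}\left(e^{\rho}R\right)$, and then assert that well-definedness of $\mathcal{F}_{\hat{W}'}\left(e^{\rho}\prod_{\beta\in S}\left(1+e^{-\beta}\right)^{-1}\right)$ ``then follows, since $W'$ is finite and the double sum is the same term-by-term.'' This inference is invalid: each block $t_{\mu}\left(e^{\rho}R\right)=\sum_{w\in W'}\left(\mbox{sgn}w\right)t_{\mu}w\left(e^{\rho}\prod_{\beta\in S}\left(1+e^{-\beta}\right)^{-1}\right)$ is a finite sum in which cancellation can occur, so a weight $\nu$ may lie in the support of $t_{\mu}w\left(\cdot\right)$ for infinitely many pairs $\left(\mu,w\right)$ while lying in the support of $t_{\mu}\left(e^{\rho}R\right)$ for only finitely many $\mu$. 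The paper warns about precisely this non-implication in the discussion following (\ref{eq:other form case A(2k-1,2l-1)}), and it genuinely fails for $A\left(2k-1,2k-1\right)^{\left(2\right)}$: there $\mathcal{F}_{T'}\left(e^{\rho}R\right)$ is well defined while $\mathcal{F}_{\hat{W}'}\left(e^{\rho}\prod_{\beta\in S}\left(1+e^{-\beta}\right)^{-1}\right)$ is not. Ungrouping a well-defined sum is not legitimate, so your step (and with it the equality (\ref{eq:A D beta sum})) is unproved.

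The repair is the paper's argument: bound the supports of the individual terms of the right-hand side directly. Writing $y=t_{\mu}w$ and using $\rho=-\frac{1}{2}\sum_{\beta\in S}\beta$, the maximal element of $\mbox{supp}\, y\left(e^{\rho}\prod_{\beta\in S}\left(1+e^{-\beta}\right)^{-1}\right)$ equals $w\rho+\sum_{\beta\in S\,:\, t_{\mu}w\beta<0}w\beta-\frac{1}{2}\sum_{\beta\in S}\left|\left(w\beta,\mu\right)\right|\delta$; this is bounded above by $\max_{w\in W'}w\rho$, and for a fixed $\delta$-coefficient only finitely many $\mu$ can occur. Hence the right-hand side is a well-defined element of $\mathcal{R}$, and the well-definedness of the left-hand side and the equality then follow by grouping over $W'$-cosets --- your $v\left(\mu\right)$ estimate becomes unnecessary (though the paper notes that such an argument does go through for these algebras). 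A smaller point: your justification for taking $W^{\#}=W'$ --- that $B_{k}$ and $C_{k}$ have isomorphic abstract Weyl groups --- is not adequate, since the identity (\ref{eq:fin dim denom}) depends on the action on $\mathfrak{h}^{*}$ and not on the abstract group; the fact actually needed is that for $B\left(k,k\right)$ the finite denominator identity holds with $W'$ as well, which the paper cites to \cite[2.2]{G2}.
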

\begin{proof}
Let us show that the right hand side of (\ref{eq:A D beta sum}) is
well defined. For every $y\in\hat{W}'$, we compute the maximal element
$u\left(y\right)$ of the support of $y\left(e^{\rho}\prod_{\beta\in S}\left(1+e^{-\beta}\right)^{-1}\right)$
and see that each maximal element appears finitely many times and
is less than or equal to $\max_{w\in W'}w\rho$.

Write $y=t_{\mu}w$ where $t_{\mu}\in T'$ and $w\in W'$. Then \textbf{\small 
\begin{eqnarray*}
u\left(t_{\mu}w\right) & = & t_{\mu}w\rho+\sum_{\beta\in S\,:\, t_{\mu}w\beta<0}t_{\mu}w\beta\\
 & \stackrel{\rho=-\frac{1}{2}\sum_{\beta\in S}\beta}{=} & -\frac{1}{2}\sum_{\beta\in S}\left(w\beta-\left(w\beta,\mu\right)\delta\right)+\sum_{\beta\in S\,:\,\left(\mu,w\beta\right)>0}\left(w\beta-\left(w\beta,\mu\right)\delta\right)+\sum_{\beta\in S\,:\,\left(\mu,w\beta\right)=0,\, w\beta<0}w\beta\\
 & = & w\rho+\sum_{\beta\in S\,:\, t_{\mu}w\beta<0}w\beta-\frac{1}{2}\sum_{\beta\in S}\left|\left(w\beta,\mu\right)\right|\delta.
\end{eqnarray*}
}Hence  $u\left(t_{\mu}w\right)=v'-m\delta$ for $v'\in Q$ only
when $m=\frac{1}{2}\sum_{\beta\in S}\left|\left(w\beta,\mu\right)\right|$,
which is possible only for finitely many $\mu$'s.

Let us prove equality (\ref{eq:A D beta sum}). In these cases, $\mathfrak{g}$
is isomorphic to $B\left(k,k\right)$ and the denominator identity
holds for $W'$ as well (see \cite[2.2]{G2}), that is
\[
e^{\rho}R=\mathcal{F}_{W'}\left(\frac{e^{\rho}}{\prod_{\beta\in S}\left(1+e^{-\beta}\right)}\right).
\]
Since $\hat{W}'=T'\rtimes W'$, the equality (\ref{eq:A D beta sum})
follows.
\end{proof}

\subsection{Step II}

In this step we show that $\hat{R}^{-1}e^{-\rho}\cdot\mathcal{F}_{T'}\left(Re^{\rho}\right)$
takes the form $f\left(q\right)$. As in the non-twisted case \cite[2.3.2]{GR},
this follows from a proposition stating that $\mbox{supp}\left(Y\right)\subset\hat{Q}^{\hat{W}}$.
For all twisted affine Lie superalgebras $\hat{Q}^{\hat{W}}=\mathbb{Z}\delta$,
completing this step of the proof. The proof of this proposition requires
the following two lemmas.
\begin{lem}
The term $e^{\hat{\rho}'-\rho}\hat{R}_{\bar{1}}\cdot\mathcal{F}_{T'}\left(e^{\rho}R\right)$
is a $\hat{W}'$-anti-invariant element of $\mathcal{R}_{\hat{W}'}$.\label{lem:hat W' anti invariant}\end{lem}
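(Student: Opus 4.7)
The plan is to rewrite the product as $\mathcal{F}_{\hat{W}'}$ applied to a single element of $\mathcal{R}$, so that Lemma \ref{lem:anti invariant} produces the desired anti-invariance. Since $h^{\vee}=0$ we have $\hat{\rho}=\rho$, hence $e^{\hat{\rho}'-\rho}\hat{R}_{\bar{1}}=e^{\hat{\rho}'-\hat{\rho}}\hat{R}_{\bar{1}}$ is $\hat{W}'$-invariant by Remark \ref{rem:anti invariant R1}. In particular, multiplication by this factor commutes with $\mathcal{F}_{\hat{W}'}$: if $X$ is $\hat{W}'$-invariant then $X\cdot\mathcal{F}_{\hat{W}'}(Y)=\mathcal{F}_{\hat{W}'}(XY)$.

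For $\hat{\mathfrak{g}}=A(2k,2k)^{(4)}$ and $D(k+1,k)^{(2)}$, Lemma \ref{lem:well Definedness cases A D} directly gives
\[
\mathcal{F}_{T'}\left(e^{\rho}R\right)=\mathcal{F}_{\hat{W}'}\left(\frac{e^{\rho}}{\prod_{\beta\in S}(1+e^{-\beta})}\right)\in\mathcal{R}.
\]
Multiplying by the $\hat{W}'$-invariant factor $e^{\hat{\rho}'-\rho}\hat{R}_{\bar{1}}$ and pulling it inside, we obtain
\[
e^{\hat{\rho}'-\rho}\hat{R}_{\bar{1}}\cdot\mathcal{F}_{T'}\left(e^{\rho}R\right)=\mathcal{F}_{\hat{W}'}\left(\frac{e^{\hat{\rho}'}\hat{R}_{\bar{1}}}{\prod_{\beta\in S}(1+e^{-\beta})}\right).
\]
The left-hand side is in $\mathcal{R}$, since $\mathcal{F}_{T'}(e^{\rho}R)\in\mathcal{R}$ by Lemma \ref{lem:well Definedness cases A D} and $\hat{R}_{\bar{1}}\in\mathcal{Y}$. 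Consequently Lemma \ref{lem:anti invariant} yields that this element lies in $\mathcal{R}_{\hat{W}'}$ and is $\hat{W}'$-anti-invariant.

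For the remaining case $\hat{\mathfrak{g}}=A(2k-1,2k-1)^{(2)}$, the group $\hat{W}'$ does not contain $T'$, so we proceed via the enlargement $\hat{W}_{C_k}=T'\rtimes W_{C_k}$ with the extended sign $\mathrm{sgn}\,s_{\delta_k}=1$, exactly as in the chain of equalities (\ref{eq:other form case A(2k-1,2l-1)}) from the $h^{\vee}\ne0$ argument. Using the $s_{\delta_k}$-invariance of $\rho$ and of $S$ for the working choice of simple roots, together with the denominator identity for $\mathfrak{g}=D(k,k)$ in the form $e^{\rho}R=\mathcal{F}_{W'}(e^{\rho}/\prod_{\beta\in S}(1+e^{-\beta}))$, we arrive at the same relation
\[
\mathcal{F}_{T'}\left(e^{\rho}R\right)=\mathcal{F}_{\hat{W}'}\left(\frac{e^{\rho}}{\prod_{\beta\in S}(1+e^{-\beta})}\right),
\]
and the argument of the previous paragraph applies verbatim.

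The main obstacle is the $A(2k-1,2k-1)^{(2)}$ case: one must verify that both $\rho$ and the fixed maximal isotropic set $S\subset\pi$ are $s_{\delta_k}$-invariant for the chosen simple roots, so that the detour through $\hat{W}_{C_k}$ is legitimate; and that $W'\cong W(D_k)$ carries the denominator identity for $\mathfrak{g}=D(k,k)$ in the required form. Since both simple factors of $\Delta_{\bar{0}}$ are isomorphic, this amounts to checking compatibility of our fixed choice of $\Delta'$ with the version of the denominator identity from \cite{G2}. Once these technical points are settled, the remaining computation is the formal manipulation described above.
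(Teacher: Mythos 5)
Your treatment of the cases $A\left(2k,2k\right)^{\left(4\right)}$ and $D\left(k+1,k\right)^{\left(2\right)}$ is correct and coincides with the paper's proof: pull the $\hat{W}'$-invariant factor $e^{\hat{\rho}'-\rho}\hat{R}_{\bar{1}}$ inside $\mathcal{F}_{\hat{W}'}$, use Lemma \ref{lem:well Definedness cases A D}, and conclude by Lemma \ref{lem:anti invariant}. The gap is in the case $A\left(2k-1,2k-1\right)^{\left(2\right)}$. The identity you want to reach there,
\[
\mathcal{F}_{T'}\left(e^{\rho}R\right)=\mathcal{F}_{\hat{W}'}\left(\frac{e^{\rho}}{\prod_{\beta\in S}\left(1+e^{-\beta}\right)}\right),
\]
cannot hold, because its right-hand side does not exist: the paper states explicitly, at the end of the subsection ``Another form of the denominator identity,'' that neither $\mathcal{F}_{\hat{W}'}\left(e^{\hat{\rho}}\prod_{\beta\in S}\left(1+e^{-\beta}\right)^{-1}\right)$ nor $\mathcal{F}_{\hat{W}''}\left(e^{\hat{\rho}}\prod_{\beta\in S}\left(1+e^{-\beta}\right)^{-1}\right)$ is well defined for $A\left(2k-1,2k-1\right)^{\left(2\right)}$ (and here $\hat{\rho}=\rho$ since $h^{\vee}=0$). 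Moreover, your intended starting point $e^{\rho}R=\mathcal{F}_{W'}\left(e^{\rho}\prod_{\beta\in S}\left(1+e^{-\beta}\right)^{-1}\right)$ is not available: the finite-dimensional identity (\ref{eq:fin dim denom}) for $\mathfrak{g}=D\left(k,k\right)$ is taken over $W^{\#}=W''$, the Weyl group of the $C_{k}$ factor $\left\{ \varepsilon_{i}\pm\varepsilon_{j}\right\} \cup\left\{ 2\varepsilon_{i}\right\} $, while $W'$ is the Weyl group of the $D_{k}$ factor $\left\{ \delta_{i}\pm\delta_{j}\right\} $. These two factors are not isomorphic (contrary to your closing remark), and unlike the $B\left(k,k\right)$ situation of the other two cases, the identity does not transfer to $W'$.

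The paper circumvents both problems by choosing a different $Z$ for this case, namely $Z=e^{\rho}R_{\bar{0}}''R_{\bar{1}}^{-1}$ with $R_{\bar{0}}''=\prod_{\alpha\in\Delta''^{+}}\left(1-e^{-\alpha}\right)$. Dividing by the $\hat{W}'$-invariant factor $e^{\rho}R_{\bar{0}}''$ and using $\rho=0$, the required equality $\mathcal{F}_{T'}\left(e^{\rho}R\right)=\mathcal{F}_{\hat{W}'}\left(Z\right)$ reduces to $\mathcal{F}_{T'}\left(R_{\bar{0}}'R_{\bar{1}}^{-1}\right)=\mathcal{F}_{\hat{W}'}\left(R_{\bar{1}}^{-1}\right)$; one then checks directly that $\mathcal{F}_{\hat{W}'}\left(R_{\bar{1}}^{-1}\right)$ \emph{is} well defined, and verifies the equality through the extended group $\hat{W}_{C_{k}}=T'\rtimes W_{C_{k}}$ with $\mbox{sgn}\, s_{\delta_{k}}=1$, using the classical Weyl denominator formula for $W_{C_{k}}$ applied to $e^{\rho'}$ rather than the super denominator identity. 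Your third paragraph needs to be replaced by an argument of this kind; as written, it manipulates a divergent sum.
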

\begin{proof}
By Lemma \ref{lem:anti invariant}, it suffices to find $Y\in\mathcal{R}_{\hat{W}'}$
such that
\begin{equation}
e^{\hat{\rho}'-\rho}\hat{R}_{\bar{1}}\cdot\mathcal{F}_{T'}\left(e^{\rho}R\right)=\mathcal{F}_{\hat{W}'}\left(Y\right).\label{eq:F W Y}
\end{equation}
We will find $Y$ in the form $Y=e^{\hat{\rho}'-\rho}\hat{R}_{\bar{1}}\cdot Z$.
Since $e^{\hat{\rho}'-\rho}\hat{R}_{\bar{1}}$ is $\hat{W}'$-invariant
(see Remark \ref{rem:anti invariant R1}), the equality (\ref{eq:F W Y})
is equivalent to 
\begin{equation}
\mathcal{F}_{T'}\left(e^{\rho}R\right)=\mathcal{F}_{\hat{W}'}\left(Z\right).\label{eq:F W Z}
\end{equation}

For the cases $A\left(2k,2k\right)^{\left(4\right)}$ and $D\left(k+1,k\right)^{\left(2\right)}$,
we take $Z:=e^{\rho}\prod_{\beta\in S}\left(1+e^{-\beta}\right)^{-1}$.
Then, equality (\ref{eq:F W Z}) follows from Lemma \ref{lem:well Definedness cases A D}
and $Y=e^{\hat{\rho}'-\rho}\hat{R}_{\bar{1}}\cdot e^{\rho}\prod_{\beta\in S}\left(1+e^{-\beta}\right)^{-1}$
belongs to $\mathcal{R}_{\hat{W}'}$ by Section \ref{sub:formal power series}.

For the remaining $A\left(2k-1,2k-1\right)^{\left(2\right)}$ case,
we take $Z:=e^{\rho}R_{\bar{0}}''\cdot R_{\bar{1}}^{-1}$ where $R_{\bar{0}}''=\prod_{\alpha\in\Delta''^{+}}\left(1-e^{-\alpha}\right)$.
Then $Y=e^{\hat{\rho}'}\hat{R}_{\bar{1}}\cdot R_{\bar{0}}''\cdot R_{\bar{1}}^{-1}$
is again in $\mathcal{R}_{\hat{W}'}$ by Section \ref{sub:formal power series}.
Let us prove equality (\ref{eq:F W Z}). Note that $e^{\rho}R_{\bar{0}}''$
is $\hat{W}'$-invariant and $\rho=0$. Dividing both sides of equality
(\ref{eq:F W Z}) by $e^{\rho}R_{\bar{0}}''$ we obtain the equivalent
equality
\begin{equation}
\mathcal{F}_{T'}\left(R_{\bar{0}}'\cdot R_{\bar{1}}^{-1}\right)=\mathcal{F}_{\hat{W}'}\left(R_{\bar{1}}^{-1}\right).\label{eq:F W R1}
\end{equation}
Note that the left hand side of (\ref{eq:F W R1}) belongs to $\mathcal{R}'$
because $\mathcal{F}_{T'}\left(e^{\rho}R\right)\in\mathcal{R}$. 

Let us show that $\mathcal{F}_{\hat{W}'}\left(R_{\bar{1}}^{-1}\right)$
is well defined and belongs to $\mathcal{R}$. For every $y\in\hat{W}'$,
one has 
\[
\mbox{max}\mbox{supp}\left(yR_{\bar{1}}^{-1}\right)=\sum_{\beta\in\Delta_{\bar{1}}^{+}\,:\, y\beta<0}y\beta.
\]
Let us show that for every $\nu\in-\hat{Q}^{+}$, there are only finitely
many $y\in\hat{W}'$ such that $\mbox{max}\mbox{supp}\left(yR_{\bar{1}}^{-1}\right)=\nu$.
Write $y=t_{\mu}w$ where $t_{\mu}\in T'$ and $w\in W'$. Note that
$y\beta<0$ if $\left(\mu,w\beta\right)>0$ and that $\left(\mu,w\beta\right)=0$
implies that $y\beta\in Q$. Since $\mu\in\mbox{span}\left\{ \delta_{i}\right\} _{i=1}^{k}$,
for every $i=1,\ldots k$ one has either $\left(\mu,w\left(\varepsilon_{i}-\delta_{i}\right)\right)>0$
or $\left(\mu,w\left(\varepsilon_{i}+\delta_{i}\right)\right)>0$.
Write $\nu=-m\delta+\nu'$ where $\nu'\in Q$ and $\mu=\sum_{i=1}^{k}a_{i}\delta_{i}$.
We get that $\sum_{i=1}^{k}\left|a_{i}\right|\le m$ which is possible
only for finitely many $\mu$'s. Thus, the sum $\mathcal{F}_{\hat{W}'}\left(R_{\bar{1}}^{-1}\right)$
is well defined.

We are left to verify equality (\ref{eq:F W R1}). Recall that in
this case $\hat{W}'\not\nsupseteq T'$. The groups $\hat{W}'$ and
$W'$ are extended to $\hat{W}_{C_{k}}=\left\langle \hat{W}',s_{\delta_{k}}\right\rangle $
and $W_{C_{k}}=\left\langle W',s_{\delta_{k}}\right\rangle $, respectively,
and $\hat{W}_{C_{k}}=T'\rtimes W_{C_{k}}$. Recall that the sign function
is extended from $\hat{W}'$ to $\hat{W}_{C_{k}}$ such that $\mbox{sgn}s_{\delta_{k}}=1$
(see \ref{sub:W}). One has 
\[
\mathcal{F}_{\hat{W}_{C_{k}}}\left(R_{\bar{1}}^{-1}\right)=\mathcal{F}_{\hat{W}_{C_{k}}}\left(R_{\bar{1}}^{-1}\right)+\mathcal{F}_{\hat{W}'}s_{\delta_{k}}\left(R_{\bar{1}}^{-1}\right)=2\mathcal{F}_{\hat{W}'}\left(R_{\bar{1}}^{-1}\right).
\]
Since $\rho=0$, $e^{\rho'}R_{1}$ is $W_{C_{k}}$-invariant (see
Remark \ref{rem:anti invariant R1}). We obtain
\begin{eqnarray*}
\mathcal{F}_{\hat{W}_{C_{k}}}\left(R_{\bar{1}}^{-1}\right) & = & \mathcal{F}_{T'}\left(\mathcal{F}_{W_{C_{k}}}\left(R_{\bar{1}}^{-1}\right)\right)\\
 & = & \mathcal{F}_{T'}\left(e^{-\rho'}R_{\bar{1}}^{-1}\cdot\mathcal{F}_{W_{C_{k}}}\left(e^{\rho'}\right)\right)\\
 & = & \mathcal{F}_{T'}\left(e^{-\rho'}R_{\bar{1}}^{-1}\cdot\mathcal{F}_{W'}\left(e^{\rho'}+s_{\delta_{k}}e^{\rho'}\right)\right)\\
 & = & 2\mathcal{F}_{T'}\left(R_{\bar{0}}'\cdot R_{\bar{1}}^{-1}\right)
\end{eqnarray*}
as required.B\end{proof}
\begin{lem}
For $\hat{\mathfrak{g}}=A\left(2k,2k\right)^{\left(4\right)}$ or
$D\left(k+1,k\right)^{\left(2\right)}$, the term $e^{\hat{\rho}''-\rho}\hat{R}_{\bar{1}}\cdot\mathcal{F}_{T'}\left(e^{\rho}R\right)$
is a $\hat{W}''$-anti-invariant element of $\mathcal{R}_{\hat{W}''}$.
\label{lem:hat W'' ant invariant case A D}\end{lem}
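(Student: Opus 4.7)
The proof mirrors Lemma \ref{lem:hat W' anti invariant} with the roles of $'$ and $''$ interchanged, which is legitimate precisely for the two algebras at hand: both satisfy $\mathfrak{g}\cong B(k,k)$ and in both cases $\Delta'$ and $\Delta''$ play symmetric roles (see Sections \ref{sub:A(2k,2k)} and the paragraph on $D(k+1,k)^{(2)}$). By Lemma \ref{lem:anti invariant} it is enough to produce $Y\in\mathcal{R}_{\hat{W}''}$ with
\[
e^{\hat{\rho}''-\rho}\hat{R}_{\bar{1}}\cdot\mathcal{F}_{T'}(e^{\rho}R)=\mathcal{F}_{\hat{W}''}(Y).
\]
Since $e^{\hat{\rho}''-\rho}\hat{R}_{\bar{1}}$ is $\hat{W}''$-invariant by Remark \ref{rem:anti invariant R1}, writing $Y=e^{\hat{\rho}''-\rho}\hat{R}_{\bar{1}}\cdot Z$ reduces the task to exhibiting $Z\in\mathcal{R}_{\hat{W}''}$ with $\mathcal{F}_{T'}(e^{\rho}R)=\mathcal{F}_{\hat{W}''}(Z)$.

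The natural candidate is $Z:=e^{\rho}\prod_{\beta\in S}(1+e^{-\beta})^{-1}$, which lies in $\mathcal{R}_{\hat{W}''}$ by Section \ref{sub:formal power series}. Because $\mathfrak{g}\cong B(k,k)$, the finite-dimensional denominator identity (\ref{eq:fin dim denom}) is valid with $W''$ in place of $W^{\#}$ (cf.\ \cite[2.2]{G2}), so $e^{\rho}R=\mathcal{F}_{W''}(Z)$. Moreover every element of $T'$ commutes with every element of $W''$: the lattice $M'$ lies in the $\delta$-span and is fixed pointwise by the $\varepsilon$-reflections generating $W''$, hence $wt_{\mu}w^{-1}=t_{w\mu}=t_{\mu}$ for all $w\in W''$ and $t_{\mu}\in T'$. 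Interchanging the sums gives
\[
\mathcal{F}_{T'}(e^{\rho}R)=\mathcal{F}_{W''}\bigl(\mathcal{F}_{T'}(Z)\bigr).
\]

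The central remaining step, and the main obstacle, is to prove $\mathcal{F}_{T'}(Z)=\mathcal{F}_{T''}(Z)$; once this is known, the decomposition $\hat{W}''=T''\rtimes W''$ yields $\mathcal{F}_{W''}(\mathcal{F}_{T''}(Z))=\mathcal{F}_{\hat{W}''}(Z)$ and completes the proof. I would attack this identity via the involution $\tau$ swapping $\delta_{i}\leftrightarrow\varepsilon_{i}$: it preserves $(\cdot,\cdot)$, interchanges $M'$ with $M''$ (and hence $T'$ with $T''$), and sends each $\beta\in S$ to $-\beta$. Consequently $\tau\bigl(\mathcal{F}_{T'}(Z)\bigr)=\mathcal{F}_{T''}(\tau Z)$, and an explicit computation shows that $\tau(Z)$ differs from $Z$ only by a scalar factor of the form $e^{c}$ that is absorbed into the translations in $M''$ after reindexing, leaving $\mathcal{F}_{T''}(\tau Z)=\mathcal{F}_{T''}(Z)$. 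The delicate point, and the reason this step has no analogue in Lemma \ref{lem:hat W' anti invariant}, is verifying that this absorption is compatible with the well-definedness already established in Lemma \ref{lem:well Definedness cases A D}; I expect this to reduce to a short direct check in each of the two algebras, using the explicit descriptions of $M'$, $M''$, and $S$ given in Sections \ref{sub:A(2k,2k)} and the $D(k+1,k)^{(2)}$ paragraph.
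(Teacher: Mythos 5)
Your overall reduction is sound and coincides with the paper's: everything comes down to showing that $\mathcal{F}_{T'}\left(e^{\rho}R\right)$ is $\hat{W}''$-anti-invariant, the $W''$-part follows from the finite-dimensional denominator identity for $B\left(k,k\right)$ together with the fact that $T'$ commutes with $W''$, and the whole lemma hinges on comparing the actions of $T'$ and $T''$ on $Z=e^{\rho}\prod_{\beta\in S}\left(1+e^{-\beta}\right)^{-1}$. But the step you yourself flag as central is exactly where your argument breaks. The involution $\tau$ swapping $\delta_{i}\leftrightarrow\varepsilon_{i}$ conjugates $T'$ into $T''$ and fixes $Z$ (indeed $\tau\rho=-\rho$, $\tau\beta=-\beta$ for $\beta\in S$, and $e^{-\rho}\prod_{\beta\in S}\left(1+e^{\beta}\right)^{-1}=e^{\rho}\prod_{\beta\in S}\left(1+e^{-\beta}\right)^{-1}$), so what your computation yields is $\tau\left(\mathcal{F}_{T'}\left(Z\right)\right)=\mathcal{F}_{T''}\left(Z\right)$. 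That is not the identity $\mathcal{F}_{T'}\left(Z\right)=\mathcal{F}_{T''}\left(Z\right)$ you need: to pass from one to the other you would have to know that $\mathcal{F}_{T'}\left(Z\right)$ is $\tau$-invariant, which is essentially the statement being proved. As written, the argument is circular.

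The paper closes exactly this gap by a pointwise identity rather than a global symmetry. Since $\rho=\frac{1}{2}\sum_{i}\left(\delta_{i}-\varepsilon_{i}\right)$, $S=\left\{ \varepsilon_{i}-\delta_{i}\right\} _{i=1}^{k}$, and $\left(\varepsilon_{i}-\delta_{i},2\varepsilon_{j}\right)=\left(\varepsilon_{i}-\delta_{i},2\delta_{j}\right)$ under the normalization $\left(\varepsilon_{i},\varepsilon_{j}\right)=-\delta_{ij}=-\left(\delta_{i},\delta_{j}\right)$ in force when $h^{\vee}=0$, one has $t_{\sum2n_{i}\varepsilon_{i}}Z=t_{\sum2n_{i}\delta_{i}}Z$ for every choice of the $n_{i}$. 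Hence each element of $T''$ acts on $\mathcal{F}_{\hat{W}'}\left(Z\right)=\mathcal{F}_{T'}\left(e^{\rho}R\right)$ (the identity supplied by Lemma \ref{lem:well Definedness cases A D}) exactly as the corresponding element of $T'\subset\hat{W}'$ does, and $T''$-invariance follows at once from $\hat{W}'$-anti-invariance together with $\mbox{sgn}\left(t\right)=1$ for translations in $\hat{W}'$. If you replace the $\tau$-argument by this direct computation, your proof goes through and is essentially the paper's; note also that the same pointwise identity is what would justify the $\tau$-invariance your route is missing, so the involution is a detour rather than an independent mechanism.
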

\begin{proof}
Note that $e^{\hat{\rho}'-\rho}\hat{R}_{\bar{1}}$ is $\hat{W}'$-invariant
and by Section \ref{sub:formal power series}, 
\[
e^{\hat{\rho}''-\rho}\hat{R}_{\bar{1}}\cdot\mathcal{F}_{T'}\left(e^{\rho}R\right)=e^{\hat{\rho}''-\hat{\rho}'}\mathcal{F}_{T'}\left(e^{\hat{\rho}'}\hat{R}_{\bar{1}}\cdot R\right).
\]
Let us show $\hat{W}''$-anti-invariance. The term $e^{\hat{\rho}''-\rho}\hat{R}_{\bar{1}}$
is $\hat{W}''$-invariant (Remark \ref{rem:anti invariant R1}) and
so it suffices to show that $\mathcal{F}_{T'}\left(e^{\rho}R\right)$
is $\hat{W}''$-anti-invariant. Note that $\hat{W}'$ and $\hat{W}''$
commute. The anti-invariance with respect to $W''$ follows from the
one of $e^{\rho}R$. It remains to show invariance with respect to
$T''$. By Lemma \ref{lem:well Definedness cases A D}, $\mathcal{F}_{T'}\left(e^{\rho}R\right)=\mathcal{F}_{\hat{W}'}\left(e^{\rho}\prod_{\beta\in S}\left(1+e^{-\beta}\right)^{-1}\right)$.
Since $\rho=\frac{1}{2}\sum_{i=1}^{k}\left(\delta_{i}-\varepsilon_{i}\right)$
and $S=\left\{ \varepsilon_{i}-\delta_{i}\right\} _{i=1}^{k}$, 
\[
t_{\sum2n_{i}\varepsilon_{i}}\left(e^{\rho}\prod_{\beta\in S}\left(1+e^{-\beta}\right)^{-1}\right)=t_{\sum2n_{i}\delta_{i}}\left(e^{\rho}\prod_{\beta\in S}\left(1+e^{-\beta}\right)^{-1}\right)
\]
 and the assertion follows.\end{proof}
\begin{prop}
The support of $e^{-\rho}\hat{R}^{-1}\cdot\mathcal{F}_{T'}\left(e^{\rho}R\right)$
is contained in $\mathbb{Z}\delta$.\end{prop}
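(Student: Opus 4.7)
The plan is to show that $Y := e^{-\rho}\hat{R}^{-1}\cdot\mathcal{F}_{T'}(e^\rho R)$ is invariant under the full even affine Weyl group $\hat{W}$ of $\hat{\mathfrak{g}}$. Combined with the boundedness of $\mathrm{supp}(\mathcal{F}_{T'}(e^\rho R))$ established in Step I, and with the standard identity $\hat{Q}^{\hat{W}}=\mathbb{Z}\delta$ valid for every twisted affine Kac-Moody (super)algebra, this forces each $\hat{W}$-orbit meeting $\mathrm{supp}(Y)$ to be finite, hence a singleton in $\mathbb{Z}\delta$; the proposition will follow.

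Since $\hat{W}$ is generated by reflections in principal roots of $\hat{\Delta}'\cup\hat{\Delta}''$, it is enough to verify invariance under $\hat{W}'$ and under $\hat{W}''$ separately. For $\hat{W}'$-invariance I would write
\[
Y \;=\; \frac{e^{\hat\rho'-\rho}\hat{R}_{\bar 1}\cdot\mathcal{F}_{T'}(e^\rho R)}{e^{\hat\rho'-\rho}\hat{R}_{\bar 1}\cdot e^\rho\hat{R}}\;=\;\frac{e^{\hat\rho'-\rho}\hat{R}_{\bar 1}\cdot\mathcal{F}_{T'}(e^\rho R)}{e^{\hat\rho'}\hat{R}_{\bar 0}},
\]
and observe that the numerator is $\hat{W}'$-anti-invariant by Lemma \ref{lem:hat W' anti invariant}, while the denominator is $\hat{W}'$-anti-invariant by Remark \ref{rem:anti invariant R1} (being the affine denominator associated with $\hat{\Delta}'$, up to the $\hat{W}'$-invariant factor $e^{\hat\rho'-\rho}\hat{R}_{\bar 1}$). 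Their ratio in $\mathcal{R}'$ is therefore $\hat{W}'$-invariant.

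For $\hat{W}''$-invariance the analogous identity with $\hat\rho''$ in place of $\hat\rho'$, together with Lemma \ref{lem:hat W'' ant invariant case A D}, handles the cases $A(2k,2k)^{(4)}$ and $D(k+1,k)^{(2)}$. The main obstacle is the $A(2k-1,2k-1)^{(2)}$ case, where no analogous $\hat{W}''$-anti-invariance lemma is available — precisely because here $\hat{W}'\not\supset T'$ and one is forced to work with the enlarged group $\hat{W}_{C_k}$. To overcome this I would exploit the outer $\mathbb{Z}/2$ symmetry of $\mathfrak{g}=D(k,k)$ that exchanges the $\varepsilon_i$'s with the $\delta_i$'s: this interchanges $\hat{\Delta}'\leftrightarrow\hat{\Delta}''$ and $T'\leftrightarrow T''$ (both root subsystems being of type $A_{2k-1}^{(2)}$), so the proof of Lemma \ref{lem:hat W' anti invariant} can be transported to yield the needed $\hat{W}''$-anti-invariance and hence $\hat{W}''$-invariance of $Y$.

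Once $\hat{W}=\langle\hat{W}',\hat{W}''\rangle$-invariance of $Y$ is in hand, Step I ensures that $\mathrm{supp}(Y)$ is contained in a set bounded above in the partial order on $\hat{Q}$, so no infinite $\hat{W}$-orbit can meet it; hence $\mathrm{supp}(Y)\subset\hat{Q}^{\hat{W}}=\mathbb{Z}\delta$, as required.
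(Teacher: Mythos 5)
Your overall framework (reduce to showing $\mathrm{supp}(Y)\subset\hat{Q}^{\hat{W}}=\mathbb{Z}\delta$ via anti-invariance with respect to $\hat{W}'$ and $\hat{W}''$) is the same as the paper's, and your treatment of $\hat{W}'$-invariance and of the cases $A(2k,2k)^{(4)}$ and $D(k+1,k)^{(2)}$ matches the paper's proof. (The paper runs the argument on the product $e^{\hat{\rho}'}\hat{R}_{\bar{0}}\cdot Y$ inside $\mathcal{R}_{\hat{W}'}$, decomposing $Y=Y_{1}+Y_{2}$ and showing every maximal element of $\mathrm{supp}(Y_{2})$ lies in $\mathbb{Z}\hat{\Delta}''$, rather than literally dividing two anti-invariants; you would need to make that precise, since invariance of an element of the localization $\mathcal{R}'$ does not by itself make its support a union of orbits.)

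The genuine gap is your treatment of $A(2k-1,2k-1)^{(2)}$. The claimed outer $\mathbb{Z}/2$ symmetry of $D(k,k)$ exchanging the $\varepsilon_{i}$ with the $\delta_{i}$ does not exist: the two simple factors of $\Delta_{\bar{0}}$ are $\Delta'=\left\{ \delta_{i}\pm\delta_{j}\right\}$ of type $D_{k}$ and $\Delta''=\left\{ \varepsilon_{i}\pm\varepsilon_{j}\right\} \cup\left\{ 2\varepsilon_{i}\right\}$ of type $C_{k}$, which are not isomorphic, and at the affine level the long roots of $\hat{\Delta}'$ occur at even multiples of $\delta$ (namely $2s\delta\pm2\delta_{g}$) while those of $\hat{\Delta}''$ occur at odd multiples ($\left(2s+1\right)\delta\pm2\varepsilon_{i}$), so no automorphism of the root datum interchanges them. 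More fundamentally, even granting an abstract identification of $\hat{\Delta}'$ with $\hat{\Delta}''$, transporting Lemma \ref{lem:hat W' anti invariant} along it would yield a statement about $\mathcal{F}_{T''}$ applied to the image of $e^{\rho}R$, not about the fixed object $\mathcal{F}_{T'}\left(e^{\rho}R\right)$; and the remark following the Step I lemma shows that $\sum_{t\in T''}t\left(e^{\rho}R\right)$ is not even well defined here, since $1\in\mathrm{supp}\, t_{n\varepsilon_{1}}\left(e^{\rho}R\right)$ for all $n\le0$. This asymmetry between $T'$ and $T''$ is essential, and it is exactly why the paper abandons the two-sided anti-invariance argument in this case and instead shows that a maximal element $\mu$ of $\mathrm{supp}\left(Y_{2}\right)$ satisfies $\mu+\rho\in U$ (the Casimir argument of Section \ref{sec:Domain U}), whence $\left(\mu,\mu\right)=0$; combined with $\mu\in\mathbb{Z}\hat{\Delta}''$ and the negative definiteness of the form on $\Delta''$ this forces $\mu\in\mathbb{Z}\delta$. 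Your proof needs this, or some other replacement, for the missing $\hat{W}''$-anti-invariance in the $A(2k-1,2k-1)^{(2)}$ case.
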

\begin{proof}
We generalize the argument of \cite[2.3.2]{GR}. Let $Y:=e^{-\rho}\hat{R}^{-1}\cdot\mathcal{F}_{T'}\left(e^{\rho}R\right)$.
Note that $e^{\hat{\rho}'-\rho}\hat{R}_{\bar{1}}\cdot\mathcal{F}_{T'}\left(e^{\rho}R\right)=e^{\hat{\rho}'}\hat{R}_{\bar{0}}\cdot Y$
which is a $\hat{W}'$-anti-invariant element of $\mathcal{R}_{\hat{W}'}$
by Lemma \ref{lem:hat W' anti invariant}. Write $Y=Y_{1}+Y_{2}$
where $\mbox{supp}\left(Y_{1}\right)\subset\mathbb{Z}\delta$ and
$\mbox{supp}\left(Y_{2}\right)\cap\mathbb{Z}\delta=\emptyset$. Since
$e^{\hat{\rho}'}\hat{R}_{\bar{0}}$ and $Y_{1}$ are $\hat{W}$-anti-invariant
and invariant, respectively, the term 
\[
e^{\hat{\rho}'}\hat{R}_{\bar{0}}\cdot Y_{2}=e^{\hat{\rho}'}\hat{R}_{\bar{0}}\cdot Y-e^{\hat{\rho}'}\hat{R}_{\bar{0}}\cdot Y_{1}
\]
 is also a $\hat{W}'$-anti-invariant element of $\mathcal{R}_{\hat{W}'}$.
We would like to show that this term is equal to zero. Let us show
that every maximal element $\mu\in\mbox{supp}\left(Y_{2}\right)$
belongs to $\mathbb{Z}\delta$, in contradiction to the definition
of $Y_{2}$. The element $\mu+\hat{\rho}'$ is maximal in $\mbox{supp}\left(\hat{R}_{\bar{0}}e^{\hat{\rho}'}\cdot Y_{2}\right)$.
By Lemma \ref{lem:support union regular orbits}, $\mbox{supp}\left(\hat{R}_{\bar{0}}e^{\hat{\rho}'}\cdot Y_{2}\right)$
is a union of regular orbits. Since $\mu+\hat{\rho}'$ is maximal
in a regular $\hat{W}'$-orbit and $\frac{2\left(\hat{\rho}',\alpha\right)}{\left(\alpha,\alpha\right)}=1$
for all $\alpha\in\hat{\pi}'$, one has $\left(\mu,\alpha\right)\ge0$.
On the other hand, $\left(\mu,\delta\right)=0$ and $\delta\in\mathbb{Z}_{>0}\hat{\pi}'$
and hence $\left(\mu,\hat{\pi}'\right)=0$. Since $\mathbb{Z}\hat{\Delta}''$
is the orthogonal set to $\hat{\pi}'$ in $\hat{Q}$, we get that
$\mu\in\mathbb{Z}\hat{\Delta}''$. 

For the cases $A\left(2k,2k\right)^{\left(4\right)}$ and $D\left(k+1,k\right)^{\left(2\right)}$,
we can interchange $\hat{W}'$ by $\hat{W}''$ and apply the same
argument (using Lemma \ref{lem:hat W'' ant invariant case A D} instead
of Lemma \ref{lem:hat W' anti invariant}). Thus, $\mu\in\hat{\Delta}'\cap\hat{\Delta}''$
and hence $\mu\in\mathbb{Z}\delta.$ 

For the case $A\left(2k-1,2k-1\right)^{\left(2\right)}$, let us show
that the support of $\mu+\rho$ belongs to $U$. Note that $\hat{R}e^{\rho}Y_{2}=\mathcal{F}_{T'}\left(Re^{\rho}\right)-\hat{R}e^{\rho}Y_{1}$
and by Section \ref{sec:Domain U}, 
\[
\mbox{supp}\left(\mathcal{F}_{T'}\left(Re^{\rho}\right)\right),\mbox{supp}\left(\hat{R}e^{\rho}\right)\subset U.
\]
Since $\left(\delta,\rho\right)=\left(\delta,\hat{Q}\right)=\left(\delta,\delta\right)=0$,
$U+\mathbb{Z}\delta\subset U$. Hence, $\mbox{supp}\left(\hat{R}e^{\rho}Y_{1}\right)\subset U$
implying that $\mbox{supp}\left(\hat{R}e^{\rho}Y_{2}\right)\subset U$
and so $\mu+\rho\in U$. 

Thus, $\left(\mu+\rho,\mu+\rho\right)=\left(\rho,\rho\right)=0$.
This is equivalent to $\left(\mu,\mu\right)=0$ because $\rho=0$.
Since the bilinear form is negative definite on $\Delta''$, we get
that $\mu\in\mathbb{Z}\delta$ and the proposition follows. \end{proof}
\begin{rem}
When $h^{\vee}\ne0$ we can not use this argument since $\left(\delta,\hat{\rho}\right)\ne0$
and $\mbox{supp}\left(\hat{R}e^{\hat{\rho}}Y_{1}\right)\not\subset U$
.
\end{rem}

\subsection{Step III}

In this section we compute $\mathcal{F}_{T'}\left(e^{\rho}R\right)\cdot e^{-\rho}\hat{R}^{-1}$,
knowing that it depends only on $q$, we describe an evaluation of
the variables $e^{-\alpha}$, $\alpha\in\pi$, in which $\mathcal{F}_{T'}\left(e^{\rho}R\right)\cdot e^{-\rho}R^{-1}$
is equal to $1$ and $R\cdot\hat{R}^{-1}$ can be easily computed.
We use the property of the algebras \textcolor{black}{$A\left(2k-1,2k-1\right)^{\left(2\right)}$,
$A\left(2k,2k\right)^{\left(4\right)}$ and $D\left(k+1,k\right)^{\left(2\right)}$
that }$\left|\Delta_{\bar{1}}^{+}\right|-\left|\Delta_{\bar{0}}^{+}\right|$
is equal to the defect which is $k$.

Let $x\in\mathbb{C}\backslash\left\{ 0\right\} $ and evaluate $e^{-\alpha}$
by $\left(-1\right)^{p\left(\alpha\right)}\cdot x$ for every $\alpha\in\pi$,
where $p\left(\alpha\right)\in\left\{ \bar{0},\bar{1}\right\} $ denotes
the parity of $\alpha$. It implies that $e^{-\gamma}$ is evaluated
by $\left(-1\right)^{p\left(\gamma\right)}x^{\mbox{ht}\left(\gamma\right)}$
for every $\gamma\in\Delta$. 
\begin{lem}
 Let $\hat{\mathfrak{g}}$ be one of the algebras \textcolor{black}{$A\left(2k-1,2k-1\right)^{\left(2\right)}$,
$A\left(2k,2k\right)^{\left(4\right)}$ and $D\left(k+1,k\right)^{\left(2\right)}$
}. Then for every $t\in T'$, $t\ne id$, $\left.\frac{\left(t\left(e^{\rho}R\right)\right)\left(x\right)}{\left(e^{\rho}R\right)\left(x\right)}\right|_{x=1}$
is equal to $0$.\end{lem}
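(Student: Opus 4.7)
The plan is to analyze both $e^{\rho}R$ and $t(e^{\rho}R)$ via their orders at $x=1$, treating $q=e^{-\delta}$ as an independent parameter since $\delta\notin\mathbb{Q}\pi$. Under the given substitution, every factor $1-e^{-\alpha}$ for even $\alpha$ becomes $1-x^{\mathrm{ht}(\alpha)}$ and every factor $1+e^{-\beta}$ for odd $\beta$ becomes $1-x^{\mathrm{ht}(\beta)}$ (using $(-1)^{p(\beta)}=-1$), so each factor vanishes to first order at $x=1$. Hence
\[
R(x)\sim C\cdot(1-x)^{\left|\Delta_{\bar{0}}^{+}\right|-\left|\Delta_{\bar{1}}^{+}\right|}=C\cdot(1-x)^{-k},
\]
with $C=\prod_{\alpha\in\Delta_{\bar{0}}^{+}}\mathrm{ht}(\alpha)/\prod_{\beta\in\Delta_{\bar{1}}^{+}}\mathrm{ht}(\beta)$ nonzero, invoking the hypothesis $\left|\Delta_{\bar{1}}^{+}\right|-\left|\Delta_{\bar{0}}^{+}\right|=k$.

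For $t=t_{\mu}$ with $0\ne\mu\in M'$, the identity $t_{\mu}(\gamma)=\gamma-(\gamma,\mu)\delta$ rewrites each factor of $t_{\mu}R$ as $1-q^{-(\gamma,\mu)}x^{\mathrm{ht}(\gamma)}$. At $x=1$ this equals $1-q^{-(\gamma,\mu)}$, which is zero if and only if $(\gamma,\mu)=0$ (for generic $q$). Writing $\Delta_{\bar{i}}^{+}(\mu)=\{\gamma\in\Delta_{\bar{i}}^{+}:(\gamma,\mu)=0\}$, one obtains $t_{\mu}R\sim C'(q)\cdot(1-x)^{d(\mu)}$ with $d(\mu)=\left|\Delta_{\bar{0}}^{+}(\mu)\right|-\left|\Delta_{\bar{1}}^{+}(\mu)\right|$. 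Since $e^{t_{\mu}\rho-\rho}=q^{(\rho,\mu)}$ contributes only a $q$-factor and $e^{\rho}$ cancels in the ratio, $t(e^{\rho}R)/(e^{\rho}R)$ has order $d(\mu)+k$ in $(1-x)$ at $x=1$, and vanishes there whenever
\[
\left|\Delta_{\bar{1}}^{+}(\mu)\right|-\left|\Delta_{\bar{0}}^{+}(\mu)\right|<k.
\]

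The combinatorial core of the proof is verifying this strict inequality for every nonzero $\mu\in M'$, case by case. For $A(2k-1,2k-1)^{(2)}$ one has $\mathfrak{g}=D(k,k)$; writing $\mu=\sum n_{i}\delta_{i}$ and $s=\left|\{i:n_{i}\ne 0\}\right|\ge 1$, the positive odd roots $\pm\varepsilon_{i}\pm\delta_{j}$ not perpendicular to $\mu$ are those with $n_{j}\ne 0$, numbering $2ks$; the positive even roots $\delta_{i}\pm\delta_{j}$ not perpendicular number at most $s(2k-s-1)$, since the $(k-s)(k-s-1)$ pairs with both indices outside $\mathrm{supp}(\mu)$ are automatically perpendicular. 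The inequality $2ks>s(2k-s-1)$ reduces to $s(s+1)>0$, which holds. The cases $A(2k,2k)^{(4)}$ and $D(k+1,k)^{(2)}$, where $\mathfrak{g}=B(k,k)$, $M'=\mathrm{span}_{\mathbb{Z}}\{2\delta_{i}\}$, and the extra short roots $\pm\delta_{i}$ (even) and $\pm\varepsilon_{i}$ (odd) appear, reduce to analogous counts that collapse to $s>0$.

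The main obstacle is this case-by-case combinatorial bookkeeping. The guiding principle is structural: since $M'$ lies in the $\delta$-span, only roots containing a $\delta$-coordinate can fail to be perpendicular to $\mu$; the odd roots always carry an additional $\varepsilon$-coordinate that multiplies their count, so non-perpendicular odd roots outnumber non-perpendicular even roots by precisely the margin that the defect equality $\left|\Delta_{\bar{1}}^{+}\right|-\left|\Delta_{\bar{0}}^{+}\right|=k$ permits.
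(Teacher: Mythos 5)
Your proof is correct, but the decisive step is carried out differently from the paper's. Both arguments share the first half: under the evaluation $e^{-\alpha}\mapsto(-1)^{p(\alpha)}x$ each positive root contributes a factor with a simple zero at $x=1$, so $e^{\rho}R$ has a pole of exact order $\left|\Delta_{\bar{1}}^{+}\right|-\left|\Delta_{\bar{0}}^{+}\right|=k$ there. For the translated term, however, the paper never analyzes the product form of $t_{\mu}R$: it first rewrites $t_{\mu}\left(e^{\rho}R\right)$ via the finite-dimensional denominator identity (\ref{eq:fin dim denom}) as a sum over $W^{\#}$ of terms $e^{t_{\mu}w\rho}\prod_{i=1}^{k}\left(1+e^{-t_{\mu}w\beta_{i}}\right)^{-1}$, so each summand has only $k$ potentially singular factors, singular precisely when $n_{i}=\left(\mu,w\beta_{i}\right)=0$; since $w$ fixes the $\delta$-components, $\left(\mu,w\beta_{i}\right)=\pm\left(\mu,\delta_{i}\right)$, and all $k$ vanish only for $\mu=0$, giving pole order $<k$ uniformly in all three cases with no root counting. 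You instead compute the exact order of $t_{\mu}R$ directly from the product, which forces the case-by-case count of positive roots orthogonal to $\mu$; your counts are right (for $D(k,k)$ the margin $2ks-s(2k-s-1)=s(s+1)$, and for the two $B(k,k)$ cases $2ks-s(2k-s)=s^{2}$, both positive for $s\ge1$), and the needed inequality $\left|\Delta_{\bar{1}}^{+}(\mu)\right|-\left|\Delta_{\bar{0}}^{+}(\mu)\right|<k$ follows. The trade-off: your route yields the exact pole order and is self-contained at this point, while the paper's reuse of the identity already invoked elsewhere makes the step uniform across root systems and two lines long. Both arguments rely equally on treating $q$ as generic so that $1-q^{n}\ne0$ for $n\ne0$, and both use that the rank of $T'$ equals the defect $k$.
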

\begin{proof}
One has that $e^{\rho}\left(x\right)=x^{n}$ for some $n\in\frac{1}{2}\mathbb{Z}$
and 
\[
R\left(x\right)=\frac{\prod_{\gamma\in\Delta_{\bar{0}}^{+}}\left(1-x^{\mbox{ht}\gamma}\right)}{\prod_{\gamma\in\Delta_{\bar{1}}^{+}}\left(1-x^{\mbox{ht}\gamma}\right)},
\]
and hence at $x=1$, the function $e^{\rho}R\left(x\right)$ has a
pole of order $\left|\Delta_{\bar{1}}^{+}\right|-\left|\Delta_{\bar{0}}^{+}\right|=k$. 

Let us show that if $t_{\mu}\ne id$, then $\left(t_{\mu}\left(e^{\rho}R\right)\right)\left(x\right)$
has a pole at $x=1$ of order which is strictly less than $k$. By
the denominator identity of finite dimensional Lie superalgebras (see
(\ref{eq:fin dim denom}))
\[
t_{\mu}\left(e^{\rho}R\right)=\sum_{w\in W^{''}}\left(-1\right)^{l\left(w\right)}\frac{e^{t_{\mu}w\rho}}{\prod_{i=1}^{k}\left(1+e^{-t_{\mu}w\beta_{i}}\right)}
\]
One has $t_{\mu}w\beta_{i}=w\beta_{i}+n_{i}\delta$ where $n_{i}=\left(\mu,w\beta_{i}\right)\in\mathbb{Z}$.
Hence the evaluation of $\left(1+e^{-tw\beta_{i}}\right)^{-1}$ is
equal to $\left(1-x^{m}q^{n_{i}}\right)^{-1}$ for some $m\in\mathbb{Z}_{\ne0}$.
Hence it has a pole at $x=1$ if and only if $n_{i}=0$. Thus, the
evaluation of $t_{\mu}\left(e^{\rho}R\right)$ has a pole of order
less or equal to the number of $n_{i}$'s which are zero. This number
is equal to $k$ if and only if $\left(\mu,\delta_{i}\right)=0$ for
$i=1,\ldots,k$. Since $\mu\in\mbox{span}\left\{ \delta_{1},\ldots,\delta_{k}\right\} $,
we get that $\mu=0$, that is $t=id$ and the assertion follows.\end{proof}
\begin{rem}
The above argument is based on the fact that the rank of $T'$ is
equal to the defect. In particular, it can not be used for the case
$h^{\vee}\ne0$.
\end{rem}
Let us compute $\left.\frac{\hat{R}}{R}\left(x\right)\right|_{x=1}$.\textbf{\textcolor{black}{}}\\
\textbf{\textcolor{black}{Case $A\left(2k-1,2k-1\right)^{\left(2\right)}$:}}
One has 
\[
\frac{\hat{R}}{R}=\prod_{n=1}^{\infty}\frac{\prod_{\alpha\in\Delta_{\bar{0}}^{(n\textrm{mod}2)}}\left(1-q^{n}e^{\alpha}\right)\left(1-q^{2n}\right)^{\dim\mathfrak{h}}\left(1-q^{2n+1}\right)^{\dim\hat{\mathfrak{g}}_{\delta}}}{\prod_{\alpha\in\Delta_{\bar{1}}^{(n\textrm{mod}2)}}\left(1+q^{n}e^{\alpha}\right)}.
\]
Here $\left|\Delta_{\bar{0}}^{(0)}\right|=\left|\Delta_{\bar{0}}^{(1)}\right|=4k^{2}-2k$
, $\left|\Delta_{\bar{1}}^{(0)}\right|=\left|\Delta_{\bar{1}}^{(1)}\right|=4k^{2}$,
$\dim\mathfrak{h}=2k$ and $\dim\hat{\mathfrak{g}}_{\delta}=2k-2$.
So 
\[
\left.\frac{\hat{R}}{R}\left(x\right)\right|_{x=1}=\prod_{n=1}^{\infty}\left(1-q^{2n+1}\right)^{-2}.
\]
\textcolor{black}{}\\
\textbf{\textcolor{black}{Case $A\left(2k,2k\right)^{\left(4\right)}$:}}
One has 
\[
\frac{\hat{R}}{R}=\prod_{n=1}^{\infty}\frac{\prod_{\alpha\in\Delta_{\bar{0}}^{(n\textrm{mod}4)}}\left(1-q^{n}e^{\alpha}\right)\left(1-q^{4n}\right)^{\dim\mathfrak{h}}\left(1-q^{4n+2}\right)^{\dim\hat{\mathfrak{g}}_{2\delta}}}{\prod_{\alpha\in\Delta_{\bar{1}}^{(n\textrm{mod}4)}}\left(1+q^{n}e^{\alpha}\right)\left(1+q^{4n+1}\right)^{\dim\hat{\mathfrak{g}}_{\delta}}\left(1+q^{4n+3}\right)^{\dim\hat{\mathfrak{g}}_{3\delta}}}.
\]
Here $\left|\Delta_{\bar{0}}^{(0)}\right|=\left|\Delta_{\bar{0}}^{(2)}\right|=4k^{2}$,
$\left|\Delta_{\bar{0}}^{(1)}\right|=\left|\Delta_{\bar{0}}^{(3)}\right|=2k$,
$\left|\Delta_{\bar{1}}^{(0)}\right|=\left|\Delta_{\bar{1}}^{(2)}\right|=4k^{2}+2k$,
$\left|\Delta_{\bar{1}}^{(1)}\right|=\left|\Delta_{\bar{1}}^{(3)}\right|=2k$,
$\dim\mathfrak{h}=\dim\hat{\mathfrak{g}}_{2\delta}=2k$ and $\dim\hat{\mathfrak{g}}_{\delta}=\dim\hat{\mathfrak{g}}_{3\delta}=1$.
So 
\[
\left.\frac{\hat{R}}{R}\left(x\right)\right|_{x=1}=\prod_{n=1}^{\infty}\left(1+q^{2n+1}\right)^{-1}.
\]
\textcolor{black}{}\\
\textbf{\textcolor{black}{Case $D\left(k+1,k\right)^{\left(2\right)}$:}}
One has 
\[
\frac{\hat{R}}{R}=\prod_{n=1}^{\infty}\frac{\prod_{\alpha\in\Delta_{\bar{0}}^{(n\textrm{mod}2)}}\left(1-q^{n}e^{\alpha}\right)\left(1-q^{2n}\right)^{\dim\mathfrak{h}}\left(1-q^{2n+1}\right)^{\dim\hat{\mathfrak{g}}_{\delta}}}{\prod_{\alpha\in\Delta_{\bar{1}}^{(n\textrm{mod}2)}}\left(1+q^{n}e^{\alpha}\right)}.
\]
Here $\left|\Delta_{\bar{0}}^{(0)}\right|=4k^{2}$ , $\left|\Delta_{\bar{0}}^{(1)}\right|=\left|\Delta_{\bar{1}}^{(1)}\right|=2k$,
$\left|\Delta_{\bar{1}}^{(0)}\right|=4k^{2}+2k$, $\dim\mathfrak{h}=2k$
and $\dim\hat{\mathfrak{g}}_{\delta}=1$. So 
\[
\left.\frac{\hat{R}}{R}\left(x\right)\right|_{x=1}=\prod_{n=1}^{\infty}\left(1-q^{2n+1}\right).
\]

\begin{rem}
For the computation of $\dim\mathfrak{g}_{\delta}$ see \cite[7.5.13]{vdL2}
or note that for $\tilde{\mathfrak{g}}=A\left(2k-1,2k-1\right)$,
$D\left(k+1,k\right)$, 
\begin{eqnarray*}
\dim\tilde{\mathfrak{g}}_{\bar{0}} & = & \left|\Delta_{\bar{0}}^{(0)}\right|+\left|\Delta_{\bar{0}}^{(1)}\right|+\dim\mathfrak{h}+\dim\hat{\mathfrak{g}}_{\delta}\\
\dim\tilde{\mathfrak{g}}_{\bar{1}} & = & \left|\Delta_{\bar{1}}^{(0)}\right|+\left|\Delta_{\bar{1}}^{(1)}\right|
\end{eqnarray*}
and for $\tilde{\mathfrak{g}}=A(2k,2k)$,
\begin{eqnarray*}
\dim\tilde{\mathfrak{g}}_{\bar{0}} & = & \left|\Delta_{\bar{0}}^{(0)}\right|+\left|\Delta_{\bar{0}}^{(1)}\right|+\left|\Delta_{\bar{0}}^{(2)}\right|+\left|\Delta_{\bar{0}}^{(3)}\right|+\dim\mathfrak{h}+\dim\hat{\mathfrak{g}}_{2\delta}\\
\dim\tilde{\mathfrak{g}}_{\bar{1}} & = & \left|\Delta_{\bar{1}}^{(0)}\right|+\left|\Delta_{\bar{1}}^{(1)}\right|+\left|\Delta_{\bar{1}}^{(2)}\right|+\left|\Delta_{\bar{1}}^{(3)}\right|+\dim\hat{\mathfrak{g}}_{\delta}+\dim\hat{\mathfrak{g}}_{3\delta}.
\end{eqnarray*}

\end{rem}

\section{Appendix}

We recall the construction of the twisted affine Lie superalgebras
and the description of their root systems. We list choices of simple
roots which we use to prove the denominator identity.

\subsection{A construction of the twisted affine Lie superalgebras.\label{sub:construction}}

We describe the automorphisms which are used in \cite{vdL2} to construct
the twisted affine Lie superalgebras. For every algebra we show how
the automorphism acts on the Chevalley generators $e_{\alpha_{i}},f_{\alpha_{i}}$
with respect to a standard choice of simple roots $\pi=\left\{ \alpha_{1},\dots,\alpha_{n}\right\} $
(a choice that contains at most one isotropic root). For $\alpha\in\tilde{\Delta}^{+}$,
let $e_{\alpha}:=\left[e_{\alpha_{i_{1}}},\left[e_{\alpha_{i_{2}}},\ldots\left[e_{\alpha_{i_{m-1}}},e_{\alpha_{i_{m}}}\right]\right]\right]$
where $\alpha_{i}\in\tilde{\pi}$ and $\alpha=\alpha_{i_{1}}+\cdots+\alpha_{i{}_{m}}$,
$i_{1}\le\ldots\le i_{m}$. For $\alpha\in\tilde{\Delta}^{-}$ define
$f_{\alpha}$ similarly.

If for all $i=1,\ldots,n$, $\sigma\left(e_{\alpha_{i}}\right)$ is
a scalar multiple of $e_{\alpha_{j}}$ for some $\alpha_{j}\in\tilde{\pi}$,
we call $\sigma$ an \emph{almost-diagram} automorphism and denote
$\sigma\left(\alpha_{i}\right):=\alpha_{j}$. For Lie algebras, all
finite order automorphisms are conjugated to almost diagram automorphisms
and all twisted affine Lie algebras can be defined using a diagram
automorphism (with no scalar multiples). We show that $A\left(2k,2l\right)^{\left(4\right)}$
can not be defined using an almost-diagram automorphism.

\subsubsection{$A\left(2k,2l-1\right)^{\left(2\right)}$ }

Let us define an automorphism $\sigma$ of order $2$ on $A\left(2k,2l-1\right)$.
Take 
\[
\tilde{\pi}=\left\{ \varepsilon_{1}-\varepsilon_{2},\ldots,\varepsilon_{2k}-\varepsilon_{2k+1},\varepsilon_{2k+1}-\delta_{1},\delta_{1}-\delta_{2},\ldots,\delta_{2l-1}-\delta_{2l}\right\} .
\]
Then $\sigma$ is defined by
\begin{eqnarray*}
\sigma\left(e_{\varepsilon_{i}-\varepsilon_{i+1}}\right)=e_{\varepsilon_{2k+1-i}-\varepsilon_{2k+2-i}}, & \sigma\left(e_{\delta_{i}-\delta_{i+1}}\right)=e_{\delta_{2l-i}-\delta_{2l+1-i}}, & \sigma\left(e_{\varepsilon_{2k+1}-\delta_{1}}\right)=f_{\varepsilon_{1}-\delta_{2l}},\\
\sigma\left(f_{\varepsilon_{i}-\varepsilon_{i+1}}\right)=f_{\varepsilon_{2k+1-i}-\varepsilon_{2k+2-i}}, & \sigma\left(f_{\delta_{i}-\delta_{i+1}}\right)=f_{\delta_{2l-i}-\delta_{2l+1-i}}, & \sigma\left(f_{\varepsilon_{2k+1}-\delta_{1}}\right)=-e_{\varepsilon_{1}-\delta_{2l}}.
\end{eqnarray*}

\subsubsection{$A\left(2k-1,2l-1\right)^{\left(2\right)}$ }

Let us define an automorphism $\sigma$ of order $2$ on $A\left(2k-1,2l-1\right)$
(i.e. $\mathfrak{psl}\left(2k,2k\right)$ if $k=l$). Take 
\[
\tilde{\pi}=\left\{ \varepsilon_{1}-\varepsilon_{2},\ldots,\varepsilon_{2k-1}-\varepsilon_{2k},\varepsilon_{2k}-\delta_{1},\delta_{1}-\delta_{2},\ldots,\delta_{2l-1}-\delta_{2l}\right\} .
\]
Then $\sigma$ is defined by
\begin{eqnarray*}
\sigma\left(e_{\varepsilon_{i}-\varepsilon_{i+1}}\right)=e_{\varepsilon_{2k-i}-\varepsilon_{2k+1-i}}\cdot\left(-1\right)^{-\delta_{i,k}}, & \sigma\left(e_{\delta_{i}-\delta_{i+1}}\right)=e_{\delta_{2l-i}-\delta_{2l+1-i}}, & \sigma\left(e_{\varepsilon_{2k+1}-\delta_{1}}\right)=f_{\varepsilon_{1}-\delta_{2l}},\\
\sigma\left(f_{\varepsilon_{i}-\varepsilon_{i+1}}\right)=f_{\varepsilon_{2k-i}-\varepsilon_{2k+1-i}}\cdot\left(-1\right)^{-\delta_{i,k}}, & \sigma\left(f_{\delta_{i}-\delta_{i+1}}\right)=f_{\delta_{2l-i}-\delta_{2l+1-i}}, & \sigma\left(f_{\varepsilon_{2k+1}-\delta_{1}}\right)=-e_{\varepsilon_{1}-\delta_{2l}}.
\end{eqnarray*}

\subsubsection{$A\left(2k,2l\right)^{\left(4\right)}$ }

Let us define an automorphism $\sigma$ of order $4$ on $A\left(2k,2l\right)$
(i.e. $\mathfrak{psl}\left(2k+1,2k+1\right)$ if $k=l$). Take 
\[
\tilde{\pi}=\left\{ \varepsilon_{1}-\varepsilon_{2},\ldots,\varepsilon_{2k}-\varepsilon_{2k+1},\varepsilon_{2k+1}-\delta_{1},\delta_{1}-\delta_{2},\ldots,\delta_{2l}-\delta_{2l+1}\right\} .
\]
Then $\sigma$ is defined as for $A\left(2k,2l-1\right)$ on $\left\{ \varepsilon_{1}-\varepsilon_{2},\ldots,\varepsilon_{2k}-\varepsilon_{2k+1},\varepsilon_{2k+1}-\delta_{1},\delta_{1}-\delta_{2},\ldots,\delta_{2l-1}-\delta_{2l}\right\} $
and 
\begin{eqnarray*}
\sigma\left(e_{\delta_{2l}-\delta_{2l+1}}\right) & = & -i\cdot f_{\delta_{1}-\delta_{2l+1}}\\
\sigma\left(f_{\delta_{2l}-\delta_{2l+1}}\right) & = & i\cdot e_{\delta_{1}-\delta_{2l+1}}.
\end{eqnarray*}

\begin{prop}
The algebra $A\left(2k,2l\right)^{\left(4\right)}$ can not be defined
using an almost-diagram automorphism.\end{prop}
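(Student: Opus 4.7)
The plan is to suppose, toward contradiction, that there exist a set of simple roots $\tilde{\pi}$ of $\tilde{\mathfrak{g}}=A(2k,2l)$ and an almost-diagram automorphism $\sigma$ of order $4$ whose associated twisted affine superalgebra is $A(2k,2l)^{(4)}$. The strategy is to force $\sigma^{2}$ to be inner, which bounds the twist of $\hat{\mathfrak{g}}_{\sigma}$ by $2$ and contradicts the assumption.

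First, I would examine the permutation $\bar{\sigma}$ of $\tilde{\pi}$ defined by $\sigma(e_{\alpha})\in\mathbb{C}^{\times}e_{\bar{\sigma}(\alpha)}$. Since $\bar{\sigma}$ preserves the Cartan matrix together with the parities of the simple roots, it is a symmetry of the decorated Dynkin diagram. For \emph{any} choice of simple roots, the Dynkin diagram of $A(m,n)=\mathfrak{sl}(m+1|n+1)$ is a linear chain on $m+n+1$ nodes, and the automorphism group of such a chain embeds into $\mathbb{Z}/2$ (the reflection). Hence $\bar{\sigma}^{2}=\mathrm{id}$, and one obtains scalars $c_{i}\in\mathbb{C}^{\times}$ with $\sigma^{2}(e_{\alpha_{i}})=c_{i}e_{\alpha_{i}}$ for all $i$.

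Next, I would verify that $\sigma^{2}$ is inner. Applying $\sigma^{2}$ to $[e_{\alpha_{i}},f_{\alpha_{i}}]=h_{\alpha_{i}}$ forces $\sigma^{2}(f_{\alpha_{i}})=c_{i}^{-1}f_{\alpha_{i}}$ and $\sigma^{2}(h_{\alpha_{i}})=h_{\alpha_{i}}$, so $\sigma^{2}$ fixes $\mathfrak{h}$ pointwise. The assignment $\alpha_{i}\mapsto c_{i}$ extends to a character of the root lattice $Q$ that records the scalar of $\sigma^{2}$ on each root space. Since $Q$ has full rank in $\mathfrak{h}_{\mathbb{R}}^{*}$ and the $c_{i}$ are roots of unity, this character lifts to some $h\in\mathfrak{h}$ with $c_{\alpha}=e^{2\pi i\alpha(h)}$, exhibiting $\sigma^{2}=\mathrm{Ad}(\exp(2\pi ih))$ as inner.

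To conclude, I would invoke the classification of twisted affine Lie superalgebras in \cite{vdL1,vdL2}: the isomorphism class of $\hat{\mathfrak{g}}_{\sigma}$ depends only on the image of $\sigma$ in $\mathrm{Out}(\tilde{\mathfrak{g}})$, and the twist $m$ in $\tilde{\mathfrak{g}}^{(m)}$ equals the order of this outer class. Since $\sigma^{2}$ is inner, the outer class of $\sigma$ has order at most $2$, so $\hat{\mathfrak{g}}_{\sigma}\cong\tilde{\mathfrak{g}}^{(1)}$ or $\tilde{\mathfrak{g}}^{(2)}$, contradicting $\hat{\mathfrak{g}}_{\sigma}\cong A(2k,2l)^{(4)}$. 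The main obstacle is this final step: the dependence of $\hat{\mathfrak{g}}_{\sigma}$ on the outer class alone is classical for Lie algebras but requires care in the super setting. A hands-on alternative is to use that the fixed subalgebra of $\sigma^{2}$, being inner, contains the full Cartan $\mathfrak{h}$, whereas the $+1$-eigenspace $\tilde{\mathfrak{g}}_{(0)}\oplus\tilde{\mathfrak{g}}_{(2)}$ of the $\mathbb{Z}_{4}$-grading of $A(2k,2l)^{(4)}$ can, using the explicit formulas of the appendix, be checked not to contain $\mathfrak{h}$, yielding a direct contradiction.
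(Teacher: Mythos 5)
Your first three steps are essentially sound: every simple root system of $A(2k,2l)$ has a chain as its Dynkin diagram, so the induced permutation $\bar{\sigma}$ has order at most $2$, hence $\sigma^{2}$ acts diagonally on the Chevalley generators and is inner, of the form $\mathrm{Ad}(\exp(2\pi i\,h))$ with $h\in\tilde{\mathfrak{h}}$. The fatal problem is that exactly the same conclusion holds for the order-$4$ automorphism that \emph{does} define $A(2k,2l)^{(4)}$: up to an inner factor it is the negative supertranspose $X\mapsto -X^{st}$, whose square is $\mathrm{Ad}(\mathrm{diag}(I,-I))$, the parity involution, which is inner. So ``$\sigma^{2}$ is inner'' cannot be the obstruction, and your closing principle --- that the twist $m$ equals the order of the outer class, so that an inner $\sigma^{2}$ forces $m\le 2$ --- is simply false for Lie superalgebras. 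Applied to the genuine defining automorphism it would show $A(2k,2l)^{(4)}\cong\tilde{\mathfrak{g}}^{(1)}$ or $\tilde{\mathfrak{g}}^{(2)}$, contradicting van de Leur's classification; the whole point of this series is that an outer class of order $2$ in $\mathrm{Out}(\mathfrak{sl}(2k+1|2l+1))$ need not admit a representative of order less than $4$. Your ``hands-on alternative'' does not repair this: the $+1$-eigenspace $\tilde{\mathfrak{g}}_{(0)}\oplus\tilde{\mathfrak{g}}_{(2)}$ of $\sigma^{2}$ is (a conjugate of) the even part $\tilde{\mathfrak{g}}_{\bar{0}}$ and certainly contains the full Cartan $\tilde{\mathfrak{h}}$, so the contradiction you hope for there does not materialize.

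The invariant that actually separates the two situations, and the one the paper uses, is the existence of \emph{odd imaginary roots}. For the defining automorphism, the fixed Cartan $\mathfrak{h}=\tilde{\mathfrak{h}}^{\sigma}$ is a proper subspace of $\tilde{\mathfrak{h}}$ whose centralizer in $\tilde{\mathfrak{g}}$ is strictly larger than $\tilde{\mathfrak{h}}$: the odd root vectors $e_{\varepsilon_{k+1}-\delta_{2l+1}}$ and $f_{\varepsilon_{k+1}-\delta_{2l+1}}$ commute with $\mathfrak{h}$ and produce the odd imaginary root $\delta$ of $A(2k,2l)^{(4)}$ visible in Table 1. For an almost-diagram automorphism, by contrast, $\sigma$ permutes the fundamental coweights, so $\mathfrak{h}$ contains the regular element $h=\sum_{\alpha}\varpi_{\alpha}$; the centralizer of $\mathfrak{h}$ is then the purely even $\tilde{\mathfrak{h}}$, and no odd imaginary root can occur. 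If you want to salvage your strategy you must replace ``order of the outer class'' by an invariant of this kind --- the outer class alone does not remember enough information in the super setting, and that loss of information is precisely what the proposition is about.
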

\begin{proof}
Let $\mathfrak{h}:=\mathfrak{g}\cap\tilde{\mathfrak{h}}$, where $\mathfrak{g}$
is the algebra formed by the fixed points of $\sigma$ and $\tilde{\mathfrak{h}}$
the Cartan subalgebra of $\tilde{\mathfrak{g}}=A\left(2k,2l\right)$.
Note that $e_{\varepsilon_{k+1}-\delta_{2l+1}}$ and $f_{\varepsilon_{k+1}-\delta_{2l+1}}$
commute with $\mathfrak{h}$. This gives rise to the imaginary odd
root $\delta$ of $\hat{\mathfrak{g}}$ with the root vector $t\otimes\left(e_{\varepsilon_{k+1}-\delta_{2l+1}}+f_{\varepsilon_{k+1}-\delta_{2l+1}}\right)$.
Let us show that this situation can not occur for almost-diagram automorphisms.

Suppose $\sigma$ is an almost diagram automorphism. Then it would
permute the fundamental co-roots $\varpi_{\alpha}$, since if $\left[\varpi_{\alpha},e_{\beta}\right]=\delta_{\alpha,\beta}e_{\beta}$
then $\left[\sigma\left(\varpi_{\alpha}\right),e_{\sigma\left(\beta\right)}\right]=\delta_{\sigma\left(\alpha\right),\sigma\left(\beta\right)}e_{\sigma\left(\beta\right)}$.
Hence $h:=\sum\varpi_{\alpha}$ belongs to $\mathfrak{h}$. However
$h$ is a regular element of $\tilde{\mathfrak{g}}$, that is $\left[h,e_{\gamma}\right]=\mbox{ht}\left(\gamma\right)e_{\gamma}$,
$\left[h,f_{\gamma}\right]=-\mbox{ht}\left(\gamma\right)f_{\gamma}$.
Thus, the centralizer of $\mathfrak{h}$ in $\tilde{\mathfrak{g}}$
is $\tilde{\mathfrak{h}}$ , in particular, there are no imaginary
odd roots.
\end{proof}

\subsubsection{$D\left(k+1,l\right)^{\left(2\right)}$ and $C\left(k+1\right)^{\left(2\right)}$ }

Let us define an automorphism $\sigma$ of order $2$ on $D\left(k+1,l\right)$
and $C\left(k+1\right)$ . For $D\left(k+1,l\right)$ take 
\[
\tilde{\pi}=\left\{ \varepsilon_{1}-\varepsilon_{2},\ldots,\varepsilon_{k}-\varepsilon_{k+1},\varepsilon_{k+1}-\delta_{1},\delta_{1}-\delta_{2},\ldots,\delta_{l-1}-\delta_{l},2\delta_{l}\right\} 
\]
and for $C\left(k+1\right)$ take 
\[
\tilde{\pi}=\left\{ \varepsilon_{1}-\varepsilon_{2},\ldots,\varepsilon_{k}-\varepsilon_{k+1},\varepsilon_{k+1}-\delta_{1}\right\} .
\]
The automorphism $\sigma$ acts by
\begin{eqnarray*}
\sigma\left(e_{\varepsilon_{k}-\varepsilon_{k+1}}\right)=e_{\varepsilon_{k}+\varepsilon_{k+1}}, &  & \sigma\left(e_{\varepsilon_{k+1}-\delta_{1}}\right)=f_{\varepsilon_{k+1}+\delta_{1}},\\
\sigma\left(f_{\varepsilon_{k}-\varepsilon_{k+1}}\right)=f_{\varepsilon_{k}+\varepsilon_{k+1}}, &  & \sigma\left(f_{\varepsilon_{k+1}-\delta_{1}}\right)=e_{\varepsilon_{k+1}-\delta_{1}}.
\end{eqnarray*}
fixing the rest of the Chevalley generators.
\begin{rem}
The automorphism $\sigma$ is a diagram automorphism with respect
to 
\[
\tilde{\pi}=\left\{ \varepsilon_{1}-\varepsilon_{2},\ldots,\varepsilon_{k-1}-\varepsilon_{k},\varepsilon_{k}-\delta_{1},\delta_{1}-\delta_{2},\ldots,\delta_{l-1}-\delta_{l},\delta_{l}-\varepsilon_{k+1},\delta_{l}+\varepsilon_{k+1}\right\} .
\]

\end{rem}

\subsubsection{$G\left(3\right)^{\left(2\right)}$ }

Let us define an automorphism $\sigma$ of order $2$ on $G\left(3\right)$.
Take 
\[
\tilde{\pi}=\left\{ \varepsilon_{3}-\varepsilon_{2},\varepsilon_{2}-\delta_{1},\delta_{1}\right\} .
\]
Then $\sigma$ is defined by
\begin{eqnarray*}
 & \sigma\left(e_{\delta_{1}}\right)=-e_{\delta_{1}},\\
 & \sigma\left(f_{\delta_{1}}\right)=-f_{\delta_{1}}.
\end{eqnarray*}
and fixing the rest of the Chevalley generators.

\subsection{Description of the root systems of the twisted affine Lie superalgebras
and choices of simple roots.\label{sub:Appendix description of simple roots}}

In this section, we describe the root systems of the twisted affine
Lie superalgebras for which we prove the denominator identity, see
Table 1. The root systems are described in terms of a basis $\left\{ \varepsilon_{i},\delta_{j},\delta\mid1\le i\le k,1\le j\le l\right\} $.
The bilinear form $\left(\cdot,\cdot\right)$ defined by 
\[
\left(\varepsilon_{i},\varepsilon_{j}\right)=-\left(\delta_{i},\delta_{j}\right)=\delta_{ij},\quad\left(\varepsilon_{i},\delta_{j}\right)=0
\]
 when $h^{\vee}\ne0$ and $\hat{\mathfrak{g}}\ne G\left(3\right)^{\left(2\right)}$.
When $h^{\vee}=0$ we have 
\[
\left(\varepsilon_{i},\varepsilon_{j}\right)=-\left(\delta_{i},\delta_{j}\right)=-\delta_{ij},\quad\left(\varepsilon_{i},\delta_{j}\right)=0
\]
The root system of $G\left(3\right)^{\left(2\right)}$ is described
by the basis $\left\{ \varepsilon_{1},\varepsilon_{2},\varepsilon_{3}\right\} $
and the inner product is defined such that $\left(\varepsilon_{1},\varepsilon_{1}\right)=1\frac{1}{2}$,
$\left(\varepsilon_{2},\varepsilon_{2}\right)=\frac{1}{2}$, $\left(\varepsilon_{3},\varepsilon_{3}\right)=-2$
and $\left(\varepsilon_{i},\varepsilon_{j}\right)=0$ if $i\ne j$.
Here $\delta$ denotes the minimal imaginary root.

In Table 2 we present a choice of simple roots which is used to prove
Theorem \ref{thm:identity} for each root system of a twisted affine
Lie superalgebra. In Table 3 we list the types of the finite part
and $\hat{\Delta}'$ and $\hat{\Delta}''$ of each algebra.

\begin{flushleft}
\begin{table}[H]
\caption{Root systems}

\raggedright{}%
\begin{tabular}{lll}
\hline 
The algebra &  & Roots\tabularnewline
\hline 
\textcolor{black}{\small $A\left(2k,2l-1\right)^{\left(2\right)}$} & \textcolor{black}{\small $k\ge l$} & {\small $\hat{\Delta}_{\bar{0}}=\left\{ s\delta_{s\ne0},s\delta\pm\varepsilon_{i}\pm\varepsilon_{j},s\delta\pm\varepsilon_{i},\left(2s+1\right)\delta\pm2\varepsilon_{i},s\delta\pm\delta_{g}\pm\delta_{h},2s\delta\pm2\delta_{h}\right\} $}\tabularnewline
 &  & {\small $\hat{\Delta}_{\bar{1}}=\left\{ s\delta\pm\delta_{g}\pm\varepsilon_{i},s\delta\pm\delta_{g}\right\} $}\tabularnewline
\hline 
\textcolor{black}{\small $A\left(2l,2k-1\right)^{\left(2\right)}$} & \textcolor{black}{\small $k\ge l+1$} & {\small $\hat{\Delta}_{\bar{0}}=\left\{ s\delta_{s\ne0},s\delta\pm\delta_{g}\pm\delta_{h},s\delta\pm\delta_{g},\left(2s+1\right)\delta\pm2\delta_{g},s\delta\pm\varepsilon_{i}\pm\varepsilon_{j},2s\delta\pm2\varepsilon_{i}\right\} $}\tabularnewline
 &  & {\small $\hat{\Delta}_{\bar{1}}=\left\{ s\delta\pm\varepsilon_{j}\pm\delta_{g},s\delta\pm\varepsilon_{i}\right\} $}\tabularnewline
\hline 
\textcolor{black}{\small $A\left(2k-1,2l-1\right)^{\left(2\right)}$} & \textcolor{black}{\small $k\ge l+1$} & {\small $\hat{\Delta}_{\bar{0}}=\left\{ s\delta_{s\ne0},s\delta\pm\varepsilon_{i}\pm\varepsilon_{j},s\delta\pm\delta_{g}\pm\delta_{h},2s\delta\pm2\delta_{g},\left(2s+1\right)\delta\pm2\varepsilon_{i}\right\} $}\tabularnewline
 &  & {\small $\hat{\Delta}_{\bar{1}}=\left\{ s\delta\pm\varepsilon_{i}\pm\delta_{g}\right\} $}\tabularnewline
\hline 
\textcolor{black}{\small $A\left(2l-1,2k-1\right)^{\left(2\right)}$} & {\small $k\ge l$} & {\small $\hat{\Delta}_{\bar{0}}=\left\{ s\delta_{s\ne0},s\delta\pm\delta_{g}\pm\delta_{h},s\delta\pm\varepsilon_{i}\pm\varepsilon_{j},2s\delta\pm2\varepsilon_{i},\left(2s+1\right)\delta\pm2\delta_{g}\right\} $}\tabularnewline
 &  & {\small $\hat{\Delta}_{\bar{1}}=\left\{ s\delta\pm\varepsilon_{i}\pm\delta_{g}\right\} $}\tabularnewline
\hline 
\textcolor{black}{\small $A\left(2k,2l\right)^{\left(4\right)}$} & \textcolor{black}{\small $k\ge l+1$} & {\small $\hat{\Delta}_{\bar{0}}=\left\{ 2s\delta_{s\ne0},2s\delta\pm\varepsilon_{i}\pm\varepsilon_{j},2s\delta\pm\varepsilon_{i},\left(4s+2\right)\delta\pm2\varepsilon_{i},\right.$}\tabularnewline
 &  & {\small $\left.2s\delta\pm\delta_{h}\pm\delta_{h},\left(2s+1\right)\delta\pm\delta_{g},4s\delta\pm2\delta_{g}\right\} $}\tabularnewline
 &  & {\small $\hat{\Delta}_{\bar{1}}=\left\{ \left(2s+1\right)\delta,\left(2s+1\right)\delta\pm\varepsilon_{i},2s\delta\pm\delta_{g},2s\delta\pm\varepsilon_{i}\pm\delta_{g}\right\} $}\tabularnewline
\hline 
\textcolor{black}{\small $A\left(2l,2k\right)^{\left(4\right)}$} & \textcolor{black}{\small $k\ge l$} & {\small $\hat{\Delta}_{\bar{0}}=\left\{ 2s\delta_{s\ne0},2s\delta\pm\delta_{g}\pm\delta_{h},2s\delta\pm\delta_{g},\left(4s+2\right)\delta\pm2\delta_{g},\right.$}\tabularnewline
 &  & {\small $\left.2s\delta\pm\varepsilon_{i}\pm\varepsilon_{j},\left(2s+1\right)\delta\pm\varepsilon_{i},4s\delta\pm2\varepsilon_{j}\right\} $}\tabularnewline
 &  & {\small $\hat{\Delta}_{\bar{1}}=\left\{ \left(2s+1\right)\delta,\left(2s+1\right)\delta\pm\delta_{g},2s\delta\pm\varepsilon_{i},2s\delta\pm\delta_{g}\pm\varepsilon_{i}\right\} $}\tabularnewline
\hline 
\textcolor{black}{\small $C\left(l+1\right)^{\left(2\right)}$, $D\left(k+1,l\right)^{\left(2\right)}$} & \textcolor{black}{\small $k\ge l+1$} & {\small $\hat{\Delta}_{\bar{0}}=\left\{ s\delta_{s\ne0},2s\delta\pm\varepsilon_{i}\pm\varepsilon_{j},2s\delta\pm\delta_{g}\pm\delta_{h},2s\delta\pm2\delta_{g},s\delta\pm\varepsilon_{i}\right\} $}\tabularnewline
 &  & {\small $\hat{\Delta}_{\bar{1}}=\left\{ 2s\delta\pm\varepsilon_{i}\pm\delta_{g},s\delta\pm\delta_{g}\right\} $}\tabularnewline
\hline 
\textcolor{black}{\small $C\left(l+1\right)^{\left(2\right)}$, $D\left(l+1,k\right)^{\left(2\right)}$} & \textcolor{black}{\small $k\ge l$} & {\small $\hat{\Delta}_{\bar{0}}=\left\{ s\delta_{s\ne0},2s\delta\pm\delta_{g}\pm\delta_{h},2s\delta\pm\varepsilon_{i}\pm\varepsilon_{j},2s\delta\pm2\varepsilon_{i},s\delta\pm\delta_{g}\right\} $}\tabularnewline
 &  & {\small $\hat{\Delta}_{\bar{1}}=\left\{ 2s\delta\pm\delta_{g}\pm\varepsilon_{i},s\delta\pm\varepsilon_{i}\right\} $}\tabularnewline
\hline 
\textcolor{black}{\small $G\left(3\right)^{\left(2\right)}$} &  & {\small $\hat{\Delta}_{\bar{0}}=\left\{ 2s\delta_{s\ne0},2s\delta\pm2\varepsilon_{1},2s\delta\pm2\varepsilon_{2},2s\delta\pm2\varepsilon_{3},\left(2s+1\right)\delta\pm\left(3\varepsilon_{2}+\varepsilon_{1}\right),\right.$}\tabularnewline
 &  & $\left.\left(2s+1\right)\delta\pm\left(3\varepsilon_{2}-\varepsilon_{1}\right),\left(2s+1\right)\delta\pm\left(\varepsilon_{1}+\varepsilon_{2}\right),\left(2s+1\right)\delta\pm\left(\varepsilon_{2}-\varepsilon_{1}\right)\right\} $\tabularnewline
 &  & {\small $\hat{\Delta}_{\bar{1}}=\left\{ {\color{black}{\color{green}{\color{black}2s\delta\pm\varepsilon_{1}\pm\varepsilon_{2}\pm}{\color{black}\varepsilon_{3}}},{\color{magenta}{\color{black}\left({\color{black}2s+1}\right)}{\color{black}\delta}{\color{black}\pm2\varepsilon_{2}\pm}{\color{black}\varepsilon_{{\color{black}3}}}},{\color{blue}{\color{black}\left(2s+1\right)\delta\pm\varepsilon_{3}}}}\right\} $}\tabularnewline
\hline 
\end{tabular}
\end{table}

\par\end{flushleft}

\begin{table}[H]
\caption{Root system types}
\textbf{}%
\begin{tabular}{cccc}
\hline 
\textbf{The algebra} & \textbf{Finite Part} & \textbf{$\hat{\Delta}'$} & \textbf{$\hat{\Delta}''$}\tabularnewline
\hline 
\textbf{$A\left(2k,2l-1\right)^{\left(2\right)}$, $k\ge l$} & \textbf{$B\left(k,l\right)$ } & \textbf{$A_{2k}^{\left(2\right)}$} & \textbf{$A_{2l-1}^{\left(2\right)}$}\tabularnewline
\hline 
\textbf{$A\left(2l,2k-1\right)^{\left(2\right)}$, $k\ge l+1$} & \textbf{$B\left(l,k\right)$} & \textbf{$A_{2k-1}^{\left(2\right)}$} & \textbf{$A_{2l}^{\left(2\right)}$}\tabularnewline
\hline 
\textbf{$A\left(2k-1,2l-1\right)^{\left(2\right)}$, $k\ge l$.} & \textbf{$D\left(k,l\right)$} & \textbf{$A_{2k-1}^{\left(2\right)}$} & \textbf{$A_{2l-1}^{\left(2\right)}$}\tabularnewline
\hline 
\textbf{$A\left(2l-1,2k-1\right)^{\left(2\right)}$, $k\ge l$.} & \textbf{$D\left(l,k\right)$} & \textbf{$A_{2k-1}^{\left(2\right)}$} & \textbf{$A_{2l-1}^{\left(2\right)}$}\tabularnewline
\hline 
\textbf{$A\left(2k,2l\right)^{\left(4\right)}$, $k\ge l+1$} & \textbf{$B\left(k,l\right)$} & \textbf{$A_{2k}^{\left(2\right)}$} & \textbf{$A_{2l}^{\left(2\right)}$}\tabularnewline
\hline 
\textbf{$A\left(2l,2k\right)^{\left(4\right)}$, $k\ge l$} & \textbf{$B\left(l,k\right)$} & \textbf{$A_{2k}^{\left(2\right)}$} & \textbf{$A_{2l}^{\left(2\right)}$}\tabularnewline
\hline 
\textbf{$D\left(k+1,l\right)^{\left(2\right)}$, $k\ge l$} & \textbf{$B\left(k,l\right)$} & \textbf{$D_{k+1}^{\left(2\right)}$} & \textbf{$C_{l}^{\left(1\right)}$}\tabularnewline
\hline 
\textbf{$D\left(l+1,k\right)^{\left(2\right)}$, $k\ge l+1$} & \textbf{$B\left(l,k\right)$} & \textbf{$C_{k}^{\left(1\right)}$} & \textbf{$D_{l+1}^{\left(2\right)}$}\tabularnewline
\hline 
\textbf{$G\left(3\right)^{\left(2\right)}$} & \textbf{$D\left(1,2,-\frac{3}{4}\right)$} & \textbf{$G_{2}^{\left(1\right)}$} & \textbf{$A_{1}^{\left(1\right)}$}\tabularnewline
\hline 
\end{tabular}
\end{table}
\begin{table}[H]
\caption{Choices of simple roots}
\begin{tabular}{lll}
\hline 
The algebra &  & Choice of simple roots\tabularnewline
\hline 
\textcolor{black}{\small $A\left(2k,2l-1\right)^{\left(2\right)}$} & \textcolor{black}{\small $k\ge l+1$} & {\small $\varepsilon_{1}-\delta_{1},\delta_{1}-\varepsilon_{2},\ldots,\varepsilon_{l}-\delta_{l},\delta_{l}-\varepsilon_{l+1},$}\tabularnewline
 &  & {\small $\varepsilon_{l+1}-\varepsilon_{l+2},\ldots,\varepsilon_{k-1}-\varepsilon_{k},\varepsilon_{k},\delta-2\varepsilon_{1}$}\tabularnewline
\hline 
\textcolor{black}{\small $A\left(2k,2k-1\right)^{\left(2\right)}$} &  & {\small $\delta_{1}-\varepsilon_{1},\ldots,\delta_{k}-\varepsilon_{k},\varepsilon_{k},\delta-\varepsilon_{1}-\delta_{1}$}\tabularnewline
\hline 
\textcolor{black}{\small $A\left(2l,2l+1\right)^{\left(2\right)}$} &  & {\small $\varepsilon_{1}-\delta_{1},\ldots,\varepsilon_{l}-\delta_{l},\delta_{l}-\varepsilon_{l+1},\varepsilon_{l+1},\delta-\delta_{1}-\varepsilon_{1}$}\tabularnewline
\hline 
\textcolor{black}{\small $A\left(2l,2k-1\right)^{\left(2\right)}$} & \textcolor{black}{\small $k\ge l+2$} & {\small $\varepsilon_{1}-\varepsilon_{2},\varepsilon_{2}-\delta_{1},\delta_{1}-\varepsilon_{3},\ldots,\varepsilon_{l+1}-\delta_{l},\delta_{l}-\varepsilon_{l+2},$}\tabularnewline
 &  & {\small $\varepsilon_{l+2}-\varepsilon_{l+3},\ldots,\varepsilon_{k-1}-\varepsilon_{k},\varepsilon_{k},\delta-\varepsilon_{1}-\varepsilon_{2}$}\tabularnewline
\hline 
\textcolor{black}{\small $A\left(2k-1,2l-1\right)^{\left(2\right)}$} & \textcolor{black}{\small $k\ge l+2$} & {\small $\varepsilon_{1}-\delta_{1},\delta_{1}-\varepsilon_{2},\ldots,\varepsilon_{l-1}-\delta_{l-1},\delta_{l}-\varepsilon_{l+1},$}\tabularnewline
 &  & {\small $\varepsilon_{l+1}-\varepsilon_{l+2},\ldots\varepsilon_{k-1}-\varepsilon_{k},\varepsilon_{k-1}+\varepsilon_{k},\delta-2\varepsilon_{1}$}\tabularnewline
\hline 
\textcolor{black}{\small $A\left(2l+1,2l-1\right)^{\left(2\right)}$} &  & {\small $\varepsilon_{1}-\delta_{1},\delta_{1}-\varepsilon_{2},\ldots,\varepsilon_{l-1}-\delta_{l-1},\delta_{l}-\varepsilon_{l+1},\delta_{l}+\varepsilon_{l+1},\delta-2\varepsilon_{1}$}\tabularnewline
\hline 
\textcolor{black}{\small $A\left(2k-1,2k-1\right)^{\left(2\right)}$} &  & {\small $\varepsilon_{1}-\delta_{1},\delta_{1}-\varepsilon_{2},\ldots,\varepsilon_{k}-\delta_{k},\varepsilon_{k}+\delta_{k},\delta-\varepsilon_{1}-\delta_{1}$}\tabularnewline
\hline 
\textcolor{black}{\small $A\left(2l-1,2l+1\right)^{\left(2\right)}$} &  & {\small $\varepsilon_{1}-\varepsilon_{2},\varepsilon_{2}-\delta_{1},\delta_{1}-\varepsilon_{3},\ldots,\varepsilon_{l+1}-\delta_{l},\varepsilon_{l+1}+\delta_{l},\delta-\varepsilon_{1}-\varepsilon_{2}$}\tabularnewline
\hline 
\textcolor{black}{\small $A\left(2l-1,2k-1\right)^{\left(2\right)}$} & \textcolor{black}{\small $k\ge l+2$} & {\small $\varepsilon_{1}-\varepsilon_{2},\varepsilon_{2}-\delta_{1},\delta_{1}-\varepsilon_{3},\ldots,\varepsilon_{l+1}-\delta_{l},\delta_{l}-\varepsilon_{l+2},$}\tabularnewline
 &  & {\small $\varepsilon_{l+2}-\varepsilon_{l+3},\ldots\varepsilon_{k-1}-\varepsilon_{k},2\varepsilon_{k},\delta-\varepsilon_{1}-\varepsilon_{2}$}\tabularnewline
\hline 
\textcolor{black}{\small $A\left(2k,2l\right)^{\left(4\right)}$} & \textcolor{black}{\small $k\ge l+1$} & {\small $\varepsilon_{1}-\delta_{1},\delta_{1}-\varepsilon_{2},\ldots,\varepsilon_{l-1}-\delta_{l},\delta_{l}-\varepsilon_{l},$}\tabularnewline
 &  & {\small $\varepsilon_{l}-\varepsilon_{l+1},\ldots\varepsilon_{k-1}-\varepsilon_{k},\varepsilon_{k},\delta-\varepsilon_{1}$}\tabularnewline
\hline 
\textcolor{black}{\small $A\left(2k,2k\right)^{\left(4\right)}$} &  & {\small $\varepsilon_{1}-\delta_{1},\delta_{1}-\varepsilon_{2},\ldots,\varepsilon_{k}-\delta_{k},\delta_{k},\delta-\varepsilon_{1}$}\tabularnewline
\hline 
\textcolor{black}{\small $A\left(2l,2k\right)^{\left(4\right)}$} & \textcolor{black}{\small $k\ge l+1$} & {\small $\varepsilon_{1}-\delta_{1},\delta_{1}-\varepsilon_{2},\ldots,\varepsilon_{l}-\delta_{l},\delta_{l}-\varepsilon_{l+1},$}\tabularnewline
 &  & {\small $\varepsilon_{l}-\varepsilon_{l+1},\ldots\varepsilon_{k-1}-\varepsilon_{k},\varepsilon_{k},\delta-\varepsilon_{1}$}\tabularnewline
\hline 
\textcolor{black}{\small $C\left(l+1\right)^{\left(2\right)}$, $D\left(k+1,l\right)^{\left(2\right)}$} & \textcolor{black}{\small $k\ge l+1$} & {\small $\varepsilon_{1}-\delta_{1},\delta_{1}-\varepsilon_{2},\ldots,\varepsilon_{l}-\delta_{l},\delta_{l}-\varepsilon_{l+1},$}\tabularnewline
 &  & {\small $\varepsilon_{l+1}-\varepsilon_{l+2},\ldots,\varepsilon_{k-1}-\varepsilon_{k},\varepsilon_{k},\delta-\varepsilon_{1}$}\tabularnewline
\hline 
\textcolor{black}{\small $D\left(k+1,k\right)^{\left(2\right)}$} &  & {\small $\varepsilon_{1}-\delta_{1},\delta_{1}-\varepsilon_{2},\ldots,\varepsilon_{k}-\delta_{k},\delta_{k},\delta-\varepsilon_{1}$}\tabularnewline
\hline 
\textcolor{black}{\small $C\left(l+1\right)^{\left(2\right)}$, $D\left(l+1,k\right)^{\left(2\right)}$} & \textcolor{black}{\small $k\ge l+1$} & {\small $\varepsilon_{1}-\delta_{1},\delta_{1}-\varepsilon_{2},\ldots,\varepsilon_{l}-\delta_{l},\delta_{l}-\varepsilon_{l+1},$}\tabularnewline
 &  & {\small $\varepsilon_{l+1}-\varepsilon_{l+2},\ldots,\varepsilon_{k-1}-\varepsilon_{k},\varepsilon_{k},\delta-\varepsilon_{1}$}\tabularnewline
\hline 
\textcolor{black}{\small $G\left(3\right)^{\left(2\right)}$} &  & {\small $\varepsilon_{3}-\varepsilon_{2}-\varepsilon_{1},2\varepsilon_{1},2\varepsilon_{2},\delta-\left(\varepsilon_{3}+2\varepsilon_{2}\right)$}\tabularnewline
\hline 
\end{tabular}
\end{table}

\end{document}